\newcommand\R{\mathbb R}
\newcommand\Z{\mathbb Z}
\newcommand\T{\mathbb T}
\newcommand\N{\mathbb N}
\newcommand\E{\mathbb E}
\newcommand\e{\varepsilon}
\newcommand\p{\mathbb P}
\newcommand\W{\mathcal W}
\newcommand\s{\mathcal S}
\newcommand\B{\mathcal B}
\newcommand\Pm[2][{\mathscr{P} }]{{\mathscr P}_{#2}}
\let\a=\alpha
\let\g=\gamma
\theoremstyle{plain}
\numberwithin{equation}{section}
\newtheorem{Theorem}{Theorem}[section]
\newtheorem{Proposition}{Proposition}[section]
\newtheorem{Lemma}{Lemma}[section]
\newtheorem{Definition}{Definition}[section]
\theoremstyle{definition}
\newtheorem{Remark}{Remark}[section]
\newtheorem{Assumption}{Assumption}
\begin{document}

\title[On the Stochastic DGH Equation]{On the stochastic Dullin-Gottwald-Holm equation:\\ Global existence and wave-breaking phenomena}

\author[C. Rohde]{Christian Rohde}
\address{Institut f\"{u}r Angewandte Analysis und Numerische Simulation\\ Universit\"{a}t Stuttgart\\  Pfaffenwaldring 57, 70569 Stuttgart, Germany}
\email{christian.rohde@mathematik.uni-stuttgart.de}
%\thanks{Support information for the first author.}

\author[H. Tang]{Hao Tang}
\address{Institut f\"{u}r Angewandte Analysis und Numerische Simulation\\ Universit\"{a}t Stuttgart\\  Pfaffenwaldring 57, 70569 Stuttgart, Germany}
\email{Hao.Tang@mathematik.uni-stuttgart.de}
%    \thanks will become a 1st page footnote.
\thanks{C.~Rohde acknowledges  support by Deutsche Forschungsgemeinschaft (DFG,
	German Research Foundation) under Germany's Excellence Strategy - EXC
	2075 - 39074001. H.~Tang is supported by a postdoctoral scholarship of the Alexander von Humboldt Foundation.}

%    General info
\subjclass[2000]{Primary: 60H15, 35Q51;  Secondary: 35A01, 35B44.}

\date{\today}

%\dedicatory{This paper is dedicated to our advisors.}

\keywords{Stochastic Dullin-Gottwald-Holm equation; Pathwise solution; Blow-up scenario; Global existence; Wave breaking; Blow-up rate.}

\begin{abstract}
	We consider a class of stochastic evolution equations that include in particular the stochastic Camassa--Holm equation. For the initial  value problem on a torus, we first establish the local existence and uniqueness of  pathwise solutions in the  Sobolev spaces $H^s$ with $s>3/2$. Then we show that strong enough nonlinear noise can prevent blow-up almost surely.  To analyze the effects of weaker noise, we consider a  linearly multiplicative noise with 
non-autonomous pre-factor.  Then, we  formulate  precise conditions on the inital data  that lead to global existence of strong solutions or to blow-up. The blow-up
occurs as wave breaking. For blow-up with  positive probability, we derive lower bounds for these  probabilities. Finally, the blow-up rate of these solutions 
	 is precisely analyzed. 
\end{abstract}

\maketitle

%\section*{This is an unnumbered first-level section head}
%This is an example of an unnumbered first-level heading.
%
%%% The correct journal style for \specialsection is all uppercase; a known bug
%%% in amsart.cls prevents this, so input must be uppercase until it is fixed.
%%\specialsection*{This is a Special Section Head}
%\specialsection*{THIS IS A SPECIAL SECTION HEAD}
%This is an example of a special section head%
%%%%%%%%%%%%%%%%%%%%%%%%%%%%%%%%%%%%%%%%%%%%%%%%%%%%%%%%%%%%%%%%%%%%%%%%%
%\footnote{Here is an example of a footnote. Notice that this footnote
%text is running on so that it can stand as an example of how a footnote
%with separate paragraphs should be written.
%\par
%And here is the beginning of the second paragraph.}%
%%%%%%%%%%%%%%%%%%%%%%%%%%%%%%%%%%%%%%%%%%%%%%%%%%%%%%%%%%%%%%%%%%%%%%%%
\tableofcontents

\section{Introduction}
The Dullin--Gottwald--Holm (DGH) equation is a third-order dispersive  evolution equation given by 
\begin{eqnarray}
u_{t}-\a^2u_{xxt}+c_0 u_x+3uu_{x}+\g u_{xxx}=\a^2\left(2u_{x}u_{xx}+uu_{xxx}\right) \mbox{ in }  (0,\infty)\times \R.\label{DGH}
\end{eqnarray}
It was derived by Dullin et al.~in \cite{Dullin-Gottwald-Holm-2001-PRL}   as a model  governing planar  solutions  to 
Euler's equations in the shallow--water regime.  The unknown $u=u(t,x)$   in \eqref{DGH}  stands for the longitudinal velocity component and $\a^2,\gamma$ and $c_0$ are some physical
parameters.

The DGH equation \eqref{DGH} embeds two different integrable soliton equations. 
When  $\a=0$, \eqref{DGH} reduces to the  Korteweg--de--Vries (KdV) equation
	\begin{eqnarray}
	u_{t}+c_0 u_x+3uu_{x}+\g u_{xxx}=0, \label{KdV}
	\end{eqnarray}
while \eqref{DGH} equals to  the following Camassa--Holm (CH) equation for the choices $\g=0$ and $\a=1$,
	\begin{eqnarray}
	u_{t}-u_{xxt}+c_0 u_x+3uu_{x}=2u_{x}u_{xx}+uu_{xxx}.\label{CH}
	\end{eqnarray}
%%%
Both \eqref{KdV} and \eqref{CH} have been studied widely in the literature. 
%For this paper it is important to outline that wave breaking effect is an 
%interesting solution phenomenon.
%%, namely the  (peaked) soliton interaction and  the wave breaking  effect. 
%Wave breaking means that the solution remains bounded but develops an  unbounded slope in finite time, cf.~\cite{Constantin-Escher-1998-Acta}. We notice that
We notice that the CH equation exhibits two interesting phenomenon, namely  (peaked) soliton interaction and wave breaking (the solution remains bounded but its slope becomes unbounded in finite time, cf. \cite{Constantin-Escher-1998-Acta}), while the KdV equation does not model breaking waves \cite{Kenig-Ponce-Vega-1993-CPAM} (when $c_0=0$, \eqref{KdV} admits a smooth soliton).  For the CH equation,   wave breaking 
   has been analyzed in \cite{Constantin-2000-JNS,Constantin-Escher-1998-CPAM,Mckean-1998-AJM} including  necessary and sufficient criterion for the occurrence of 
   breaking waves in the Cauchy problem with smooth initial data \cite{Constantin-Escher-1998-Acta,Mckean-1998-AJM}.  As pointed out in \cite{Constantin-2006-Inventiones, Constantin-Escher-2007-BAMS,Constantin-Escher-2011-Annals}, the essential feature of the CH equation
   is the occurrence of traveling waves with a peak at their crest, exactly like that the governing equations for water waves admit the so-called Stokes waves of the greatest height. 
   Bressan\&Constantin \cite{Bressan-Constantin-2007-AA,Bressan-Constantin-2007-ARMA}  developed a new approach to the analysis of the CH equation, and proved the existence of a
   global conservative and dissipative solutions. Later, Holden\&Raynaud \cite{Holden-Raynaud-2007-CPDE,Holden-Raynaud-2009-DCDS} also
   obtained  global conservative and dissipative solutions using a  Lagrangian point of view.

   Combining the linear dispersion of the KdV equation with the nonlocal dispersion of the CH equation, the DGH equation \eqref{DGH}  preserves its bi-Hamiltonian structure, is completely integrable (via the inverse scattering transform method \cite{Dullin-Gottwald-Holm-2001-PRL}) and admits  also  soliton solutions.

%%%%%%%%%%%%%%%%%%%%%%%%%%%%%%%%%%%%%%%%%%%%%%    

Here, we are interested in stochastic variants  of the  DGH equation to model   energy consuming/exchanging mechanisms in \eqref{DGH} that are driven by 
external stochastic influences. Adding multiplicative noise  has also been connected to the  prevailing hypotheses that  the onset of turbulence in fluid models involves randomness, cf. \cite{Brzezniak-Capinski-Flandoli-1991-MMMAS,Kuksin-Shirikyan-2012-book,E-2001-Chapter-CDM}. Precisely, our stochastic evolution equation writes as 
\begin{eqnarray}
u_{t}-\a^2u_{xxt}+c_0 u_x+3uu_{x}+\g u_{xxx}-\dot{W}(1-\a^2\partial^2_{xx})h(t,u)=\alpha^2\left(2u_{x}u_{xx}+uu_{xxx}\right),
\label{DGH random dissipation}
\end{eqnarray}
where $W$ is a standard 1-D Brownian motion and $h=(t,u)$  is a typically nonlinear function.
We notice that the deterministic counterpart of \eqref{DGH random dissipation} is the  weakly dissipative CH equation 
 \begin{align}
u_t-u_{xxt}+3uu_x+\lambda(1-\partial^2_{xx})h(t,u)=2u_xu_{xx}+uu_{xxx},\ \ \lambda>0.\label{dissipative CH}
\end{align}
Equation \eqref{dissipative CH} has been introduced and studied  for $h(t,u) =u$ in \cite{Lenells-Wunsch-2013-JDE,Wu-Yin-2009-JDE},
In \eqref{dissipative CH}, the operator $\lambda(1-\partial^2_{xx})$ is linear and only
models the (weak) energy dissipation. In order to model more general random energy exchange, we consider the possibly nonlinear noise term
$-\dot{W}(1-\a^2\partial^2_{xx})h(t,u)$ in \eqref{DGH random dissipation}.

 To compare our model with deterministic weakly dissipative CH type equations (see  \cite{Wu-Yin-2008-SIAM,Wu-Yin-2009-JDE,Lenells-Wunsch-2013-JDE} and the references therein), we focus our attention on the case that $\alpha\ne0$. For convenience, we assume $\a=1$ in this paper. When $\a=1$,
applying the operator $(1-\partial_{xx}^2)^{-1}$ to \eqref{DGH random dissipation} gives rise to 
the following nonlocal equation
\begin{equation}\label{SDGH-noise}
{\rm d}u+\left[\left(u-\g\right)\partial_xu+(1-\partial_{xx}^2)^{-1}\partial_x\left(u^2+\frac{1}{2}u_x^2+\left(c_0+\g\right)u\right)\right]{\rm d}t=h(t,u){\rm d}W.
\end{equation} 
In \eqref{SDGH-noise}, the operator $(1-\partial_{xx}^2)^{-1}$ in torus $\T=\R/2\pi\Z$ is understood as
\begin{align}\label{Helmboltz operator}
\left[(1-\partial_{xx}^2)^{-1}f\right](x)=[G_{\T}*f](x),\ \ G_{\T}=\frac{\cosh(x-2\pi\left[\frac{x}{2\pi}\right]-\pi)}
{2\sinh(\pi)},\ \forall\ f\in L^2(\T),
\end{align}
where $[x]$ stands for the integer part of $x$. Here we remark that for additive noise, 
\eqref{SDGH-noise} has been studied in \cite{Lv-etal-2019-JMP}. In this paper we will consider a more general context with noise   driven by a cylindrical Wiener process  $\W$, rather than a standard white noise $W$. It is assumed that $\W$ is defined on an auxiliary Hilbert space $U$ which
 is adapted to a right-continuous filtration $\{\mathcal{F}_t\}_{t\geq0}$, see Section \ref{assumptions definitions and main rsults} for more details.
 
With the above notations, the first goal of the present   paper is to  analyze   the existence and  uniqueness of pathwise solutions  and to determine 
possible blow-up criterion for  the periodic boundary value problem
	\begin{equation} \label{periodic Cauchy problem}
	\left\{\begin{aligned}
	&{\rm d}u+\left[\left(u-\g\right)\partial_xu+F(u)\right]\,{\rm d}t=h(t,u)\,{\rm d}\W,\quad x\in\T=\R/2\pi\Z, \ t>0,\\
	&u(\omega,0,x)=u_0(\omega,x),\quad x\in\T,
	\end{aligned} \right.
	\end{equation}
	where $F(u)=F_1(u)+F_2(u)+F_3(u)$ and
	\begin{align}\label{F decomposition}
\left\{\begin{aligned}
&F_1(u)=(1-\partial_{xx}^2)^{-1}\partial_x\left(u^2\right),\\
&F_2(u)=(1-\partial_{xx}^2)^{-1}\partial_x\left(\frac{1}{2}u_x^2\right),\\
&F_3(u)=(1-\partial_{xx}^2)^{-1}\partial_x\left(\left(c_0+\g\right)u\right).
	\end{aligned} \right.
	\end{align}
%	with $F(\cdot)$ being defined by \eqref{F decomposition}.
%	
%\end{description}
Under generic assumptions on $h(t,u)$,  we will show that \eqref{periodic Cauchy problem} has a local unique pathwise solution (see Theorem \ref{Local pathwise solution} below). Here we remark that Chen et al. in \cite{Chen-Gao-Guo-2012-JDE} have considered the stochastic CH equation with additive noise. For the linear multiplicative noise case, we refer to \cite{Tang-2018-SIMA} for the stochastic CH equation, and to \cite{Chen-Gao-2016-PA} for a stochastic modified CH equation.

 For stochastic nonlinear  evolution equations, the noise effect is a crucial question to study. Can the noise   prevent blow-up or does it even drive the formation of singularities? For example, it is known that the well-posedness of linear stochastic transport equations with noise can be established under weaker hypotheses than for its deterministic counterpart (cf.~\cite{Fedrizzi-Flandoli-2013-JFA,Flandoli-Gubinelli-Priola-2010-Invention}).  For stochastic scalar conservation laws, noise on the flux may bring some regularization effects \cite{Gess-Souganidis-2017-CPAM}, and it may also trigger the discrete entropy dissipation  in the numerical schemes for conservation laws such that the schemes enjoy some stability properties not present in the deterministic case \cite{Kroker-Rohde-2012-ANM}. 
% For stochastic Euler equations, certain noise may prevent the coalescence of vortex singularities in two-dimensional space \cite{Flandoli-Gubinelli-Priola-2011-SPTA}. 
 Moreover, we refer to   \cite{Kim-2010-JFA,GlattHoltz-Vicol-2014-AP,Rockner-Zhu-Zhu-2014-SPTA,Tang-2018-SIMA} for the dissipation of energy caused by the linear multiplicative noise. 
 
 However,  most  existing results on the regularization effects by noise for transport type equations are  for linear equations or restricted to linear growing noise. Much less is known concerning the cases of  nonlinear equations with nonlinear noises. Indeed, the interplay between regularization provided by noise and the {nonlinearities of the governing equation}  is more complicate.
For example,  singularities can be prevented in some cases (cf. \cite{Flandoli-Gubinelli-Priola-2011-SPTA}: coalescence of vortices disappears in stochastic 2D Euler equations). On the other hand, it is known  that noise does not prevent shock formation  in the Burgers equation, see \cite{Flandoli-2011-book}.  

Therefore the second goal of this work is to study the case of strong nonlinear noise  and consider its effect.    As we will see in \eqref{blow-up criterion common} below, for the solution to \eqref{periodic Cauchy problem}, its $H^s$-norm blows up if and only if its $W^{1,\infty}$-norm blows up. This suggests choosing a noise coefficient involving the ${W^{1,\infty}}$-norm of $u$. Therefore in this work we  consider the case that $h(t,u)\,{\rm d}\W
	=a\left(1+ \|u\|_{W^{1,\infty}}\right)^{\theta}u\, {\rm d}W$, where $\theta>0$, $a\in\R$ and $W$ is a standard 1-D Brownian motion. We will try to determine the range of $\theta$ and $a$ such that the solution to the following problem exists globally in time:
\begin{equation}\label{SDGH non blow up Eq}
\left\{\begin{aligned}
&{\rm d}u+\left[(u-\g)u_x+F(u)\right]{\rm d}t
= a \left(1+\|u\|_{W^{1,\infty}}\right)^{\theta}u\,{\rm d}W,\quad x\in\T, \ t>0,\\
&u(\omega,0,x)=u_{0}(\omega,x), \qquad  x\in \T.
\end{aligned} \right.
\end{equation}
As is shown in Theorem \ref{Non breaking} below,  if the noise is strong enough (either $\theta>1/2$, $a\neq0$ or $\theta=1/2$, $a^2\gg 1$), then the global existence holds true for \eqref{SDGH non blow up Eq} almost surely. This result
justifies the idea  that large nonlinear noise can actually prevent blow-up.

On the other hand, as put forward by e.g. Whitham  in \cite{Whitham-2011-Wiley},  the wave breaking phenomenon is one of the most intriguing long-standing 
problems of water wave theory. For the deterministic CH type equations, the wave breaking phenomenon has been extensively studied, see \cite{Constantin-Escher-1998-Acta,Constantin-Escher-1998-CPAM,Mckean-1998-AJM} for example. Particularly, for equations with dissipation  term $\lambda(u-u_{xx})$, we refer to \cite{Wu-Yin-2009-JDE} for the phenomenon of wave breaking. When random noise is involved, as far as we know,  we can only refer to \cite{Crisan-Holm-2018-PhyD,Tang-2020-Arxiv} for wave breaking. In \cite{Crisan-Holm-2018-PhyD} the authors proved that temporal stochasticity (in the sense of Stratonovich)
in the diffeomorphic flow map for the stochastic CH equation does not prevent the wave breaking process. In \cite{Tang-2020-Arxiv}, wave breaking in the stochastic CH equation multiplicative It\^{o} noise is considered.

Thus, the third goal of this paper is to consider noise effects  associated with the phenomenon of wave breaking. 
Due to Theorem \ref{Non breaking}, we see that if wave breaking occurs, the noise term does not grow fast. Hence we consider  $\theta=0$ in \eqref{SDGH non blow up Eq} but introduce a non-autonomous pre-factor  depending on time $t$. Precisely, we consider the DGH equation with  linear multiplicative noise
given by
	\begin{equation} \label{DGH linear noise 1}
	\left\{\begin{aligned}
	&{\rm d}u+\left[(u-\gamma)\partial_xu+F(u)\right]{\rm d}t=b(t) u \, {\rm d}W,\quad x\in\T, \ t>0,\\
	&u(\omega,0,x)=u_0(\omega,x),\quad x\in\T.
	\end{aligned} \right.
	\end{equation}
This case can be formally reformulated as the following stochastic evolution equation when $s>3$
	\begin{align}
	u_{t}-u_{xxt}+c_0 u_x+3uu_{x}+\g u_{xxx}=2u_{x}u_{xx}+uu_{xxx}+b(t)(u-u_{xx})\dot{W}.\label{DGH linear noise 2}
	\end{align}
	
%\end{description}
When $c_0+\gamma=0$, we give two conditions on the initial data that guarantee the global existence of the solutions. Besides,  we also estimate the probability that the solution breaks and describe its breaking rate. See Theorems \ref{Decay result}-\ref{Blow-up rate}.%,  which answers the question \eqref{Q breaking}.

The precise statements of all the results above can be found in Section \ref{assumptions definitions and main rsults} jointly with the necessary assumptions on the noise coefficient.

\section{Definitions, assumptions and main results} \label{assumptions definitions and main rsults}

We begin by introducing some notations. 
$L^2(\T)$ is the usual space of square--integrable functions  on $\T$. For $s\in\R$,  $D^s=(1-\partial_{xx}^2)^{s/2}$ is defined by
$\widehat{D^sf}(k)=(1+k^2)^{s/2}\widehat{f}(k)$, where $\widehat{g}$ is the Fourier transform of $g$. The Sobolev space $H^s(\T)$ is defined as
\begin{align*}
H^s(\T)\triangleq\{f\in L^2(\T):\|f\|_{H^s(\T)}^2=\sum_{k\in{\Z}}(1+k^2)^s|\widehat{f}(k)|^2<\infty\},
\end{align*}
and the inner product $(f,g)_{H^s}$ is
$(f,g)_{H^s} :=\sum_{k\in{\Z}}(1+k^2)^s\widehat{f}(k)\cdot\overline{\widehat{g}}(k)=(D^sf,D^sg)_{L^2}.$
When the function space refers to  $\T$, we will drop $\T$ if there is no ambiguity. We will use $\lesssim $ to denote estimates that hold up to some universal \textit{deterministic} constant which may change from line to line but whose meaning is clear from the context. For linear operators $A$ and $B$, we denote  $[A,B]=AB-BA$.

We briefly recall some aspects of the theory of infinite dimensional stochastic analysis which we will use below. We refer the readers to \cite{Prato-Zabczyk-2014-Cambridge,Gawarecki-Mandrekar-2010-Springer,Kallianpur-Xiong-1995-book} for an extended treatment of this subject.

We call
$\s=(\Omega, \mathcal{F},\p,\{\mathcal{F}_t\}_{t\geq0}, \W)$ a stochastic basis, where $\{\mathcal{F}_t\}_{t\geq0}$ is a right-continuous filtration on $(\Omega, \mathcal{F})$ such that $\{\mathcal{F}_0\}$
contains all the $\p$-negligible subsets and $\W(t)=\W(\omega,t),\omega\in\Omega$ is a cylindrical  Wiener process 
adapted to $\{\mathcal{F}_t\}_{t\geq0}$. More precisely, we consider a separable Hilbert space $U$  as well as a larger Hilbert space 
$U_0$ such that the embedding $U\hookrightarrow U_0$ is Hilbert--Schmidt. 
Therefore we define
$$\W=\sum_{k=1}^\infty W_ke_k\in C([0,\infty), U_0)\ \ \p-a.s.,$$
where $\{W_k\}_{k\geq1}$ is a sequence of mutually independent one-dimensional Brownian motions and $\{e_k\}_{k\in\N}$ is a complete orthonormal basis of $U$. 

For a predictable stochastic process $G$ taking values in the space of Hilbert--Schmidt operators from $U$ to $H^s$, denoted by $L_2(U, H^s)$, 
the  It\^{o} stochastic integral 
\begin{equation*}
\int_0^\tau G{\rm d}\W=\sum_{k=1}^\infty\int_0^\tau G e_k{\rm d}W_k
\end{equation*}
is well defined (see  \cite{Prato-Zabczyk-2014-Cambridge,Prevot-Rockner-2007-book} for example).  Remember that
$$G\in L_2(U, H^s)\Longleftrightarrow\|G\|^2_{L_2(U, H^s)}=\sum_{k=1}^\infty\|G e_k\|^2_{H^s}<\infty.$$
The stochastic integral $\int_0^t G\ {\rm d}\W$ is an $H^s$-valued square--integrable martingale. In our case we have the  Burkholder-Davis-Gundy inequality 
\begin{align}\label{BDG G}
\E\left(\sup_{t\in[0,T]}\left\|\int_0^t G\ {\rm d}\W\right\|_{H^s}^p\right)
\leq C(p,s) \E\left(\int_0^T \|G\|^2_{L_2(U, H^s)}\ {\rm d}t\right)^\frac{p}{2},\ \ p\geq1.
\end{align}
%or in terms of the coefficients,
%\begin{align}\label{BDG Gek}
%\E\left(\sup_{t\in[0,T]}\left\|\sum_{k=1}^\infty\int_0^t Ge_k\ {\rm d}W_k\right\|_{H^s}^p\right)
%\leq C(p,s) \E\left(\int_0^T \sum_{k=1}^\infty\|Ge_k\|^2_{H^s}\ {\rm d}t\right)^\frac{p}{2},\ \ p\geq1,
%\end{align}

\subsection{Definitions of the solutions}

We first precise the notion of pathwise solutions to \eqref{periodic Cauchy problem}.
\begin{Definition}[Pathwise solutions]\label{pathwise solution definition}
	Let $\s=(\Omega, \mathcal{F},\p,\{\mathcal{F}_t\}_{t\geq0}, \W)$ be a fixed stochastic basis. Let $s>3/2$ and $u_0$ be an $H^s$-valued $\mathcal{F}_0$ measurable random variable.
	\begin{enumerate}
		\item A local pathwise solution to \eqref{periodic Cauchy problem} is a pair $(u,\tau)$, where $\tau$ is a stopping time satisfying $\p\{\tau>0\}=1$ and
		$u:\Omega\times[0,\infty)\rightarrow H^s$  is an $\mathcal{F}_t$ predictable $H^s$-valued process satisfying
		\begin{equation*}
		u(\cdot\wedge \tau)\in C([0,\infty);H^s)\ \ \p-a.s.,
		\end{equation*}
		and for all $t>0$,
		\begin{equation*} 
		u(t\wedge \tau)-u(0)+\int_0^{t\wedge \tau}
		\left[(u-\gamma)\partial_xu+F(u)\right]{\rm d}t'
		=\int_0^{t\wedge \tau}h(t',u){\rm d}\W \ \ \p-a.s.
		\end{equation*}
		\item Local pathwise solutions are said to be pathwise unique, if given any two pairs of local pathwise solutions $(u_1,\tau_1)$ and $(u_2,\tau_2)$ with $\p\left\{u_1(0)=u_2(0)\right\}=1,$ we have
		\begin{equation*}
		\p\left\{u_1(t,x)=u_2(t,x),\ \forall\ (t,x)\in[0,\tau_1\wedge\tau_2]\times \T\right\}=1.
		\end{equation*}
		\item Additionally, $(u,\tau^*)$ is called a maximal solution to \eqref{periodic Cauchy problem} if $\tau^*>0$ almost surely and if there is an increasing sequence $\tau_n\rightarrow\tau^*$ such that for any $n\in\N$, $(u,\tau_n)$ is a pathwise solution to \eqref{periodic Cauchy problem} and on the set $\{\tau^*<\infty\}$,
		\begin{equation*} 
		\sup_{t\in[0,\tau_n]}\|u\|_{H^s}\geq n.
		\end{equation*}
		\item If $\tau^*=\infty$ almost surely, then we say that the pathwise solution exists globally.
	\end{enumerate}
\end{Definition}

\subsection{Assumptions} 
Next, we prescribe some conditions on the noise coefficient 
$h$ in \eqref{periodic Cauchy problem} and on  $b$ in 
\eqref{DGH linear noise 2}.

\begin{Assumption}\label{Assumption-1} Let $s>1/2$.
	We assume that $h:[0,\infty)\times H^s\ni (t,u)\mapsto h(t,u)\in L_2(U, H^s)$ is continuous in $(t,u)$. Moreover, we assume the following:
	\begin{itemize}

		\item There is a non-decreasing locally bounded function $f(\cdot):[0,\infty)\rightarrow[0,\infty)$ such that for any $t>0$,
		\begin{align}\label{assumption 1 for h}
		\|h(t,u)\|_{L_2(U, H^s)}\leq f(\|u\|_{W^{1,\infty}})\|u\|_{H^s}.
		\end{align}
		Particularly, in the additive noise case, we assume  $h:[0,\infty)\times \T\ni (t,x)\mapsto h(t,x)\in L_2(U, H^s)$ is continuous meaning that \eqref{assumption 1 for h} reduces to $\|h(t,x)\|_{L_2(U, H^s)}\leq C$ for some $C>0$.
		\item
		There is a non-decreasing  locally bounded function $q(\cdot):[0,\infty)\rightarrow[0,\infty)$, such that for any $t>0$,
		\begin{equation}\label{assumption 2 for h}
		\sup_{\|u\|_{H^s},\|v\|_{H^s}\le N}\left\{{\bf 1}_{\{u\ne v\}}  \frac{\|h(t,u)-h(t,v)\|_{L_2(U, H^s)}}{\|u-v\|_{H^s}}\right\} \le q(N),\ \ N\ge 1.
		\end{equation}
	\end{itemize}
\end{Assumption}

 After the regularization effect of strong noise is established in Theorem \ref{Non breaking}, to  analyze the effect of 
  noise on the regularity of pathwise solutions, we restrict ourselves to the linear-noise case   \eqref{DGH linear noise 2} imposing the following bounds on the coefficient $b$.

\begin{Assumption}\label{Assumption-3}
	When considering \eqref{DGH linear noise 2} with non-autonomous linear noise $b(t) u\, {\rm d}  W$, we assume that there are constants $b_*,b^*>0$ such that $\displaystyle0<b_*\leq b^2(t)\leq b^*$ for all $t$.

\end{Assumption}

\begin{Remark}\label{Remark assumption explanation}
Let us give some 
	  brief explanations for  the assumptions. 
	  \begin{itemize}
	  	\item  The function  $h:[0,\infty)\times H^s\ni (t,u)\mapsto h(t,u)\in L_2(U, H^s)$ is required to be continuous in $(t,u)$. This will be essential to pass to the  limit when establishing the existence of  a martingale solution {as an intermediate step}, cf. \cite{Breit-Feireisl-Hofmanova-2018-Book,Tang-2018-SIMA,Tang-2020-Arxiv}.
	  	
	  	\item  The uniform-in-time assumption  \eqref{assumption 1 for h}  bounds the growth of the   $L_2(U, H^s)$-norm of the  noise coefficient in terms of  a product  of a nonlinear  function of the  $W^{1,\infty}$-norm and the $H^s$-norm. This allows us  to  control the  $W^{1,\infty}$-norm by some cut-off later. 
	  	
	  	\item  Formula  \eqref{assumption 2 for h} ensures local Lipschitz continuity in $H^s$, which will be used to obtain pathwise uniqueness.
	  	
	  	\item   Let us outline that we will use a Girsanov-type transformation to study \eqref{DGH linear noise 1} (see Remark \ref{girsanov} and Section \ref{global existence and wave breaking}). The assumption $b^2(t)\leq b^*$ is used to guarantee that such transformation is well-defined and the condition $b(t)\neq0, t\ge0$ is needed to establish certain estimates for the Girsanov-type process ${\rm e}^{\int_0^tb(t') {\rm d} W_{t'}-\int_0^t\frac{b^2(t')}{2} {\rm d}t'}$ (see Lemma \ref{eta Lemma}).  In Theorem \ref{Decay result}, the  condition $0<b_*\leq b^2(0)$ is used to bound the initial data, cf. \eqref{bound Hs global}.
	  \end{itemize}

\end{Remark}

\subsection{Main results and remarks}
Now we present our results. For the general case \eqref{periodic Cauchy problem}, we have the following local existence result which moreover relates the possible blow-up in the $H^s$-norm to simultaneous blow up in the $W^{1,\infty}$-norm.

\begin{Theorem}[Maximal solutions]\label{Local pathwise solution}
	Let $s>3/2$, $c_0,\g\in\R$ and let $h(t,u)$ satisfy Assumption \ref{Assumption-1}. For a given stochastic basis $\s=(\Omega, \mathcal{F},\p,\{\mathcal{F}_t\}_{t\geq0}, \W)$ and  an $H^s$-valued $\mathcal{F}_0$ measurable random variable $u_0$ satisfying $\E\|u_0\|^2_{H^s}<\infty$, the initial value problem \eqref{periodic Cauchy problem} admits a local unique pathwise solution $(u,\tau)$  in the sense of Definition \ref{pathwise solution definition} with
	\begin{equation}\label{L2 moment bound}
	u(\cdot\wedge \tau)\in L^2\left(\Omega; C\left([0,\infty);H^s\right)\right).
	\end{equation}
	Besides, $(u,\tau)$ can be extended to a unique maximal solution $(u,\tau^*)$ in the sense of Definition \ref{pathwise solution definition} and the following blow-up criterion holds true:
	\begin{equation}\label{blow-up criterion common}
	\textbf{1}_{\left\{\limsup_{t\rightarrow \tau^*}\|u(t)\|_{H^{s}}=\infty\right\}}=\textbf{1}_{\left\{\limsup_{t\rightarrow \tau^*}\|u(t)\|_{W^{1,\infty}}=\infty\right\}}\ \ \p-a.s.
	\end{equation}
\end{Theorem}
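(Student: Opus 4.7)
The plan is to construct pathwise solutions through a truncation-plus-approximation scheme combined with a Gy\"{o}ngy--Krylov argument, extend these to a maximal solution by gluing, and prove the blow-up criterion via a commutator-based $H^s$ energy estimate controlled by the $W^{1,\infty}$ norm.

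For existence, I would first cut off the quadratic nonlinearities with a smooth $\chi_R(\|u\|_{W^{1,\infty}})$ equal to $1$ on $[0,R]$ and vanishing outside $[0,2R]$, and then regularize (by a Friedrichs mollifier $J_\e$ or a Galerkin projection onto trigonometric polynomials) to obtain finite-dimensional SDEs whose coefficients are globally Lipschitz and hence globally solvable by classical SDE theory. Standard $H^s$ energy estimates, together with the Burkholder--Davis--Gundy inequality \eqref{BDG G} and Assumption \ref{Assumption-1}, yield uniform bounds of these approximations in $L^2(\Omega; C([0,T]; H^s))$; combined with fractional-in-time H\"{o}lder regularity of the stochastic integral this gives tightness of the laws in $C([0,T]; H^{s'})$ for any $s' < s$, and a Skorohod-type representation theorem produces a martingale solution of the truncated equation.

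Pathwise uniqueness is proved at a lower regularity (in $L^2$, or $H^\sigma$ with $1/2 < \sigma \le s-1$) via It\^{o}'s formula on $\|u_1-u_2\|^2$, controlling the transport-type drift difference through the Sobolev embedding $H^s \hookrightarrow W^{1,\infty}$ and the stochastic increment through Assumption \ref{Assumption-1}. On the event where both solutions remain bounded in $W^{1,\infty}$ by some $R$, one obtains
\begin{equation*}
d\|u_1 - u_2\|^2 \le C(R)\,\|u_1 - u_2\|^2\,dt + dM_t,
\end{equation*}
and Gronwall yields uniqueness. A Gy\"{o}ngy--Krylov-type lemma then upgrades the martingale solution to a pathwise solution $u^R$ of the truncated problem. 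Setting $\tau_R = \inf\{t\ge 0 : \|u^R(t)\|_{W^{1,\infty}} \ge R\}$ and using the consistency $u^R \equiv u^{R'}$ on $[0,\tau_R\wedge\tau_{R'}]$ for $R < R'$, I obtain the local pathwise solution $(u,\tau)$, and the maximal solution $(u,\tau^*)$ by taking $\tau^* = \lim_{R\to\infty}\tau_R$ together with the standard characterization of the blow-up set.

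The Sobolev embedding $H^s\hookrightarrow W^{1,\infty}$ (valid for $s>3/2$) immediately gives one direction of \eqref{blow-up criterion common}. For the converse, I would apply $D^s$ to the equation, pair with $D^s u$ in $L^2$, and use It\^{o}'s formula on $\|u\|_{H^s}^2$. The transport term is handled by the Kato--Ponce commutator bound $\|[D^s,u]\partial_x u\|_{L^2}\lesssim \|\partial_x u\|_{L^\infty}\|u\|_{H^s}$ together with the integration-by-parts identity $(uD^s\partial_x u, D^s u)_{L^2} = -\tfrac{1}{2}(\partial_x u\cdot D^s u, D^s u)_{L^2}$. The nonlocal terms $F_i(u)$ are controlled in $H^s$ using the one-derivative smoothing of $(1-\partial_{xx}^2)^{-1}\partial_x$, which yields in particular $\|F_1(u)\|_{H^s}\lesssim \|u\|_{L^\infty}\|u\|_{H^s}$, $\|F_2(u)\|_{H^s}\lesssim \|\partial_x u\|_{L^\infty}\|u\|_{H^s}$, and $\|F_3(u)\|_{H^s}\lesssim \|u\|_{H^s}$. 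Together with Assumption \ref{Assumption-1}, this gives a Gronwall-type bound
\begin{equation*}
d\|u\|_{H^s}^2 \le C\bigl(1 + f(\|u\|_{W^{1,\infty}})^2 + \|u\|_{W^{1,\infty}}\bigr)\|u\|_{H^s}^2\,dt + dM_t,
\end{equation*}
and a stochastic Gronwall argument on the event $\{\sup_{t<\tau^*}\|u\|_{W^{1,\infty}}<\infty\}$ (with a localization stopping time to control the martingale part) forces $\sup_{t<\tau^*}\|u\|_{H^s}<\infty$, contradicting the defining property of $\tau^*$. The most delicate step is the treatment of $F_2(u)=(1-\partial_{xx}^2)^{-1}\partial_x(\tfrac12 u_x^2)$: naively $u_x^2$ would cost an extra derivative, and it is precisely the one-derivative smoothing of the Helmholtz resolvent that allows this term to be absorbed with only $\|u\|_{W^{1,\infty}}$ as the growth factor.
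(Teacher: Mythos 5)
Your overall strategy coincides with the paper's: cut off the $W^{1,\infty}$-norm, regularize, obtain uniform estimates and tightness, pass to a martingale solution via Skorokhod, prove pathwise uniqueness at lower regularity, upgrade with Gy\"ongy--Krylov, and remove the cut-off with the stopping times $\tau_R$; the only structural variant is that you run the compactness argument directly at the level $s>3/2$ with convergence in $H^{s'}$, $3/2<s'<s$, whereas the paper first imposes the intermediate requirement $s>3$ (convergence in $H^{s-1}$, so that $H^{s-1}\hookrightarrow W^{1,\infty}$) and then reaches $3/2<s\le 3$ by mollifying the initial data and passing to the limit. That variant is plausible, but you should be aware that the final density/initial-data step is where the paper recovers genuine $C([0,\infty);H^s)$ regularity for the rough range of $s$, and it does not come for free from tightness alone.

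The one genuine gap is in your proof of the converse direction of \eqref{blow-up criterion common}: you apply It\^o's formula directly to $\|u\|_{H^s}^2$ after pairing $D^s$ of the equation with $D^su$, and you use the integration-by-parts identity $(uD^s\partial_x u, D^s u)_{L^2}=-\tfrac12(\partial_x u\, D^s u,D^s u)_{L^2}$. Neither step is justified for a solution that is merely $H^s$-valued: the drift $(u-\g)\partial_xu+F(u)$ takes values only in $H^{s-1}$, so the Hilbert-space It\^o formula (which would need $((u-\g)\partial_xu,u)_{H^s}$ to make sense) and the variational It\^o formula under a Gelfand triple (which would need the pairing ${}_{H^{s-1}}\langle (u-\g)\partial_xu,u\rangle_{H^{s+1}}$) are both unavailable, and $D^su$ is only in $L^2$, so the integration by parts is formal. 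The paper handles exactly this point by inserting the Friedrichs-type mollifier $T_\e=(1-\e^2\Delta)^{-1}$ of \eqref{Define Te}: It\^o is applied to $\|T_\e u\|_{H^s}^2$, the transport term is controlled through the extra commutator $[T_\e,u\partial_x]$ (Lemma \ref{Te commutator}) combined with Kato--Ponce (Lemma \ref{Kato-Ponce commutator estimate}), all packaged into the $\e$-uniform bound of Lemma \ref{uux+F Hs inner product}, and only then is $\e\to0$ taken. In addition, your "stochastic Gronwall on the event $\{\sup_{t<\tau^*}\|u\|_{W^{1,\infty}}<\infty\}$" should be made precise as in the paper's Lemma \ref{blow-up criterion lemma}: check that $t\mapsto\|u(t)\|_{W^{1,\infty}}$ is adapted (so $\tau_{2,n}$ is a stopping time), localize with the double-indexed times $\tau_{2,n}\wedge k$, prove $\E\sup_{t\le\tau_{2,n}\wedge k}\|u\|_{H^s}^2<\infty$ for each $n,k$, and conclude $\tau_2\le\tau_1$ a.s.\ by a union over $n,k$ -- no contradiction argument is then needed. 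With the mollifier inserted and the localization spelled out, your estimate and the constants you claim for $F_1,F_2,F_3$ match the paper's Lemma \ref{F(u) lemma} and the argument goes through.
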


\begin{Remark}
 For the  proof of Theorem \ref{Local pathwise solution} one can follow the ideas in  e.g.~\cite{GlattHoltz-Vicol-2014-AP,Debussche-Glatt-Temam-2011-PhyD,Breit-Feireisl-Hofmanova-2018-CPDE,Breit-Hofmanova-2016-IUMJ,Breit-Feireisl-Hofmanova-2018-Book,Tang-2018-SIMA,Crisan-Flandoli-Holm-2018-JNS}
	by constructing a sequence of approximations for a problem with cut-off
	for the  $W^{1,\infty}$-norm.
	Such a cut-off  implies at-most linear  growth of  $u$ and 
	  guarantees  the global existence of an approximate solution. 
	  % (see, e.g. \cite{Flandoli-2008-SPDE-book,GlattHoltz-Vicol-2014-AP}). 
	  Otherwise we have  to find a  positive lower bound for the lifespan of the approximate solutions, which is {\it a priori} not clear. Besides, with the cut-off, one can close   the {\it a priori} $L^2(\Omega;H^{s})$ estimate by splitting $\E(\|u\|_{H^s}^2\|u\|_{W^{1,\infty}})$.
	  
\end{Remark}

Turning to noise-driven  regularization effects,  the blow-up  criterion \eqref{blow-up criterion common} suggests relating the noise coefficient  to  the  ${W^{1,\infty}}$-norm of $u$.  
Therefore we  consider \eqref{SDGH non blow up Eq} with {scalable} noise impact, i.e.  we  
 assume $h(t,u)=a (1+\|u\|)^{\theta}_{W^{1,\infty}}u$ for some $\theta>0$ and $a\in\R$. 
 When $a$ and $\theta$ satisfy certain strength-conditions, the noise term  counteracts the 
formation of singularities and we have

\begin{Theorem}[Global existence for strong  nonlinear noise]\label{Non breaking} Let $\s=(\Omega, \mathcal{F},\p,\{\mathcal{F}_t\}_{t\geq0}, W)$ be a fixed stochastic basis. Let $s>\frac{5}{2}$, $c_0,\g\in\R$ and $u_0\in H^s$ be an $H^s$-valued $\mathcal{F}_0$-measurable random variable with $\E\|u_0\|^2_{H^s}<\infty$.  Assume that $\theta$ and $a$ satisfy 
	\begin{equation}\label{a theta condition}
 \text{either} \ a\in\R\setminus\{0\},\ \theta>\frac{1}{2}
\   \text{or}  \  
a^2> 2Q\ ,\theta=\frac{1}{2},
	\end{equation}
	where $Q=Q(s,c_0,\g)$ is a constant that will be specified  in Lemma \ref{uux+F Hs inner product}. Then 
	the corresponding maximal solution
	$(u,\tau^*)$  to \eqref{SDGH non blow up Eq} satisfies
	$$\p\left\{	\tau^*=\infty\right\}
	=1.$$
	%	That is to say, blow-up can be prevented if $\theta>\frac32$.
\end{Theorem}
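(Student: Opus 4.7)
The plan is to exploit the blow-up criterion \eqref{blow-up criterion common}: since $H^s\hookrightarrow W^{1,\infty}$ for $s>3/2$, it suffices to show that $\limsup_{t\to\tau^*}\|u(t)\|_{H^s}$ is finite almost surely. The mechanism is that the quadratic variation of a sufficiently strong nonlinear noise, after applying It\^o's formula to a slowly growing functional of $\|u\|_{H^s}^2$, produces a negative drift contribution that dominates the deterministic energy growth. A natural such functional is $V(u)=\ln(1+\|u\|_{H^s}^2)$, which is finite on $H^s$ and tends to $+\infty$ logarithmically as $\|u\|_{H^s}\to\infty$.

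Starting from the local pathwise solution supplied by Theorem \ref{Local pathwise solution}, I would first apply It\^o's formula in $H^s$ to $\|u\|_{H^s}^2$ along \eqref{SDGH non blow up Eq}. The deterministic drift $-2((u-\g)\partial_x u+F(u),u)_{H^s}$ is bounded by Lemma \ref{uux+F Hs inner product} by $2Q\|u\|_{W^{1,\infty}}\|u\|_{H^s}^2$, while the It\^o correction from $h(t,u)=a(1+\|u\|_{W^{1,\infty}})^\theta u$ contributes $a^2(1+\|u\|_{W^{1,\infty}})^{2\theta}\|u\|_{H^s}^2$. Applying the scalar It\^o formula to $V$ and collecting the $\ln''$-correction yields, after rearrangement,
\[
dV\le\frac{\|u\|_{H^s}^2}{(1+\|u\|_{H^s}^2)^2}\Bigl[2Q\|u\|_{W^{1,\infty}}(1+\|u\|_{H^s}^2)-a^2(1+\|u\|_{W^{1,\infty}})^{2\theta}(\|u\|_{H^s}^2-1)\Bigr]\,dt+dM_t,
\]
where $M_t$ is a local martingale.

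Under condition \eqref{a theta condition} the drift above is bounded pointwise by a deterministic constant $K$. Indeed, for $\|u\|_{H^s}$ large the bracket behaves asymptotically like $2Q\|u\|_{W^{1,\infty}}-a^2(1+\|u\|_{W^{1,\infty}})^{2\theta}$: if $\theta>1/2$ and $a\ne 0$, the second term dominates when $\|u\|_{W^{1,\infty}}$ is large; if $\theta=1/2$ and $a^2>2Q$, one computes $2Q\|u\|_{W^{1,\infty}}-a^2(1+\|u\|_{W^{1,\infty}})=-a^2-(a^2-2Q)\|u\|_{W^{1,\infty}}\le -a^2$. The remaining regime of bounded $\|u\|_{H^s}$ (hence bounded $\|u\|_{W^{1,\infty}}$ by Sobolev embedding, $s>5/2$) is trivially under control. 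For each $R>0$ set $\tau_R=\inf\{t\ge 0:\|u(t)\|_{H^s}\ge R\}\wedge\tau^*$; by \eqref{blow-up criterion common}, $\tau_R\nearrow\tau^*$. Then $V(u(t\wedge\tau_R))-K(t\wedge\tau_R)$ is a supermartingale, and taking expectations gives
\[
\ln(1+R^2)\,\p(\tau_R\le t)\le\E V(u(t\wedge\tau_R))\le\E V(u_0)+Kt\le\E\|u_0\|_{H^s}^2+Kt<\infty,
\]
so $\p(\tau_R\le t)\to 0$ as $R\to\infty$. Hence $\p(\tau^*\le t)=0$ for every $t>0$, and letting $t\to\infty$ yields $\tau^*=\infty$ almost surely.

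The main obstacle is securing the sharp threshold $a^2>2Q$ at $\theta=1/2$: this requires the factor $1/2$ in the $\ln''$-correction to exactly balance the factor of $2$ coming from differentiating $\|u\|_{H^s}^2$ so that the leading contributions match precisely in the noise-induced negative term, which essentially forces the choice $V=\ln(1+\|\cdot\|_{H^s}^2)$ (any slower-growing or faster-growing companion would miss one side of the threshold). A secondary technical point is the rigorous justification of It\^o's formula for $\|u\|_{H^s}^2$ applied to an $H^s$-valued process; this is handled by working with the Galerkin/cut-off approximations underlying Theorem \ref{Local pathwise solution} and passing to the limit.
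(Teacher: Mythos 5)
Your proposal is correct and follows essentially the same route as the paper: the Lyapunov functional $\log(1+\|u\|_{H^s}^2)$, the drift bound from Lemma \ref{uux+F Hs inner product}, and the observation that the It\^{o} correction of the logarithm turns the quadratic variation of the noise into a negative term dominating $2Q(1+\|u\|_{W^{1,\infty}})$ under \eqref{a theta condition} (this is exactly the content of Lemma \ref{log lemma}), with the It\^{o} formula justified by an approximation step (the paper uses the mollifier $T_\e$ rather than Galerkin truncations, which is the cleaner way to make your limiting argument rigorous since the drift only lies in $H^{s-1}$). The only genuine deviation is the conclusion: you estimate $\p(\tau_R\le t)$ by optional stopping of the bounded-drift supermartingale at the hitting time $\tau_R$, whereas the paper first obtains $\E\sup_{t\le T\wedge\tau_m}\log(1+\|u\|_{H^s}^2)\le C(T)$ via the Burkholder--Davis--Gundy inequality \eqref{BDG G} (which is why its Lemma \ref{log lemma} carries the extra logarithmically weighted term) and then applies Chebyshev; your variant is a legitimate and slightly leaner way to finish, at the cost of controlling only the stopped process rather than the running supremum.
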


 Theorem \ref{Non breaking} implies that  blow-up of pathwise solutions might only be observed if the noise is weak.  To detect such noise, we analyze  the simpler 
ansatz $h(t,u)= b(t)u$ as in 
 \eqref{DGH linear noise 1}.  Even in this linear noise case the situation is quite subtle allowing for  global existence as well as  blow-up of solutions. For global existence, we can identify two cases.

\begin{Theorem}[Global existence for weak noise I]\label{Decay result} Let $s>3/2$. Assume  $c_0+\g=0$. Let $b(t)$ satisfy Assumption \ref{Assumption-3}  and $\s=(\Omega, \mathcal{F},\p,\{\mathcal{F}_t\}_{t\geq0}, W)$ be a fixed stochastic basis. Assume $u_0$ is an $H^s$-valued $\mathcal{F}_0$ measurable random variable satisfying $\E\|u_0\|^2_{H^s}<\infty$. Let $K=K(s)>0$ be a constant such that the embedding  $\|\cdot\|_{W^{1,\infty}}<K\|\cdot\|_{H^s}$ holds. Then there is a $C=C(s)>1$ such that for any $R>1$ and $\lambda_1>2$, if 
	\begin{equation}\label{bound Hs global}
	\|u_0\|_{H^s}<\frac{b_*}{CK\lambda_1 R}\ \ \p-a.s.,
	\end{equation}
then  \eqref{DGH linear noise 1} has a maximal solution $(u,\tau^*)$ satisfying  for any $\lambda_2>\frac{2\lambda_1}{\lambda_1-2}$ the estimate
	\begin{equation}\label{Decay statement}
	\p \left\{
	\|u(t)\|_{H^s}<\frac{b_*}{CK\lambda_1}
	{\rm e}^{-\frac{\left((\lambda_1-2)\lambda_2-2\lambda_1\right)}{2\lambda_1\lambda2}\int_0^tb^2(t') {\rm d}t'}
	\ {\rm\ for\ all}\ t>0
	\right\}
	\geq 1-\left(\frac{1}{R}\right)^{2/\lambda_2}.
	\end{equation}
	
\end{Theorem}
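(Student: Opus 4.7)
The plan is to remove the multiplicative noise via a Girsanov-type substitution, obtain a pathwise Riccati bound for the resulting random PDE, and then reinsert randomness through a Doob-type control on the multiplier together with a careful balancing of decay rates.

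First I would introduce $\beta(t):=\exp\bigl(\int_0^t b(t')\,dW_{t'} - \tfrac12\int_0^t b^2(t')\,dt'\bigr)$, the positive stochastic exponential satisfying $d\beta = b\beta\,dW$ and $\beta(0)=1$, and set $v := u/\beta$. Since $c_0+\gamma=0$ kills $F_3$, and since $uu_x$, $F_1$ and $F_2$ are quadratic in $u$, It\^o applied to $u=\beta v$ cancels the $b(t)u\,dW$ term against $v\,d\beta$ and leaves the pathwise random equation
\[
\partial_t v + \beta(t)\bigl[vv_x + F(v)\bigr] - \gamma v_x = 0, \qquad v(0)=u_0,
\]
with $\|u(t)\|_{H^s} = \beta(t)\|v(t)\|_{H^s}$. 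A pathwise $H^s$-energy estimate then uses skew-symmetry of $\partial_x$ to kill the $\gamma v_x$ contribution and invokes Lemma~\ref{uux+F Hs inner product} (the source of $Q=Q(s,c_0,\gamma)$) together with $\|\cdot\|_{W^{1,\infty}}\le K\|\cdot\|_{H^s}$ to yield $\tfrac{d}{dt}\|v\|_{H^s}^2\le 2QK\beta(t)\|v\|_{H^s}^3$. Integrating this Riccati inequality, for as long as $QK\|u_0\|_{H^s}\int_0^t\beta\,ds<1$,
\[
\|v(t)\|_{H^s}\le \frac{\|u_0\|_{H^s}}{1-QK\|u_0\|_{H^s}\int_0^t\beta(s)\,ds}.
\]

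Next I would control $\beta$ via Doob. For $\mu:=2/\lambda_2\in(0,1)$ (guaranteed by $\lambda_2>2\lambda_1/(\lambda_1-2)>2$), the process $\bigl[\beta(t)\,e^{(1/2-1/\lambda_2)\int_0^t b^2}\bigr]^{\mu}$ coincides with the nonnegative exponential martingale $\exp\!\bigl(\mu\int_0^t b\,dW - \tfrac{\mu^2}{2}\int_0^t b^2\bigr)$ of initial value $1$, so Doob's maximal inequality yields
\[
\p\Bigl(\sup_{t\ge 0}\beta(t)\,e^{(1/2-1/\lambda_2)\int_0^t b^2\,ds}\ge R\Bigr)\le R^{-2/\lambda_2}.
\]
Writing $\alpha:=1/2-1/\lambda_2$ and $B(t):=\int_0^t b^2\,dt'$, on the complementary event $\Omega_R$ I have $\beta(t)<Re^{-\alpha B(t)}$; the ellipticity $b^2\ge b_*$ together with the change of variables $ds=dB/b^2$ gives $\int_0^t\beta\,ds \le \frac{R}{b_*\alpha}(1-e^{-\alpha B(t)})$ on $\Omega_R$. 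The elementary monotonicity of $a\mapsto (1-e^{-aB})/a$ on $(0,\infty)$, combined with $\alpha > 1/\lambda_1$ (which is equivalent to the hypothesis $\lambda_2>2\lambda_1/(\lambda_1-2)$), upgrades this to $\int_0^t\beta\,ds\le \frac{R\lambda_1}{b_*}(1-e^{-B(t)/\lambda_1})$.

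Finally I would combine everything on $\Omega_R$. Inserting the smallness $\|u_0\|_{H^s}<b_*/(CK\lambda_1 R)$ gives $QK\|u_0\|_{H^s}\int_0^t\beta\,ds < (Q/C)(1-e^{-B(t)/\lambda_1})$, and choosing a universal $C=C(s)>Q$ (so $Q/C<1$, independently of $\lambda_1,\lambda_2,R$) forces $1-QK\|u_0\|_{H^s}\int_0^t\beta\,ds>e^{-B(t)/\lambda_1}$ on $\Omega_R$. Plugging this, $\beta(t)<Re^{-\alpha B(t)}$, and $R\|u_0\|_{H^s}<b_*/(CK\lambda_1)$ into the Riccati bound yields
\[
\|u(t)\|_{H^s}<\frac{b_*}{CK\lambda_1}\,e^{-(\alpha-1/\lambda_1)B(t)} = \frac{b_*}{CK\lambda_1}\exp\!\Bigl(-\bigl(\tfrac12-\tfrac1{\lambda_1}-\tfrac1{\lambda_2}\bigr)\!\int_0^t\! b^2(t')\,dt'\Bigr)
\]
for all $t\ge 0$ on $\Omega_R$. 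This uniform $H^s$-bound on $\Omega_R$ rules out blow-up via \eqref{blow-up criterion common}, so $\tau^*=\infty$ on $\Omega_R$ and \eqref{Decay statement} follows. The main obstacle is the last step: Doob's bound natively supplies only the rate $\alpha=1/2-1/\lambda_2$, and recovering the sharper rate $\alpha-1/\lambda_1$ demanded by the theorem requires simultaneously exploiting $b^2\ge b_*$ to force exponential damping of $\int_0^t\beta\,ds$ in $B(t)$, and invoking the monotonicity of $(1-e^{-aB})/a$ in $a$ to shift the $\lambda_1$-dependence (which only entered through the hypothesis on $u_0$) into the final decay exponent.
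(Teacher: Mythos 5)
Your proposal is correct and reaches exactly the stated decay exponent $\tfrac12-\tfrac1{\lambda_1}-\tfrac1{\lambda_2}=\tfrac{(\lambda_1-2)\lambda_2-2\lambda_1}{2\lambda_1\lambda_2}$ and the probability $1-R^{-2/\lambda_2}$, but the middle of your argument is genuinely different from the paper's. The paper, after the same Girsanov transform and the same pathwise estimate $\tfrac{\rm d}{{\rm d}t}\|v\|^2_{H^s}\le C\beta\|v\|_{W^{1,\infty}}\|v\|^2_{H^s}$, passes to $w={\rm e}^{-\int_0^t b\,{\rm d}W}u$ and runs a two-stopping-time bootstrap: on $[0,\tau_1)$, where $\|u\|_{W^{1,\infty}}\le b^2(t)/(C\lambda_1)$, the nonlinear term is absorbed into a fraction of the damping $-b^2/2$ produced by the transform, giving linear exponential decay of $\|w\|_{H^s}$; the second stopping time $\tau_2$ caps ${\rm e}^{\int_0^t b\,{\rm d}W-\int_0^t b^2/\lambda_2\,{\rm d}t'}$ by $R$ (Lemma \ref{eta Lemma}\,(ii)), and the embedding plus $b^2\ge b_*$ closes the loop via $\p\{\tau_1\ge\tau_2\}=1$. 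You instead keep the cubic (Riccati) structure for $v$, integrate it pathwise on all of $[0,\tau^*)$, and show directly on the same exponential-martingale event (your Doob bound is precisely Lemma \ref{eta Lemma}\,(ii) with $\lambda=1/\lambda_2$) that $\int_0^t\beta\,{\rm d}s\le\frac{R\lambda_1}{b_*}(1-{\rm e}^{-B(t)/\lambda_1})$, using $b^2\ge b_*$ through a time change and the monotonicity of $a\mapsto(1-{\rm e}^{-aB})/a$; this replaces the bootstrap and still lands on the exact constants. Two small points to make explicit: since $c_0+\g=0$ you must (as you implicitly do) use the refined form of Lemma \ref{uux+F Hs inner product} without the additive constant $1$ — the literal statement would produce an extra factor ${\rm e}^{c\int_0^t\beta\,{\rm d}s}$ that spoils the stated constant $b_*/(CK\lambda_1)$ — and the pathwise differentiation of $\|v\|^2_{H^s}$ at regularity $s>3/2$ requires the mollifier $T_\e$ exactly as the paper notes in its own proof. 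With those caveats your route is a valid, arguably more streamlined, alternative: the paper's bootstrap localizes in time and exploits smallness of $\|u\|_{W^{1,\infty}}$ against $b^2(t)$, while yours exploits global integrability of $\beta$ on the good event; both hinge on the same transform, energy estimate, and exponential-martingale control.
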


\begin{Theorem}[Global existence for weak noise II]\label{Global existence result} Let $c_0+\g=0$ and $s>3$. Let $b(t)$ satisfy Assumption \ref{Assumption-3}  and $\s=(\Omega, \mathcal{F},\p,\{\mathcal{F}_t\}_{t\geq0}, W)$ be a fixed stochastic basis. Assume $u_0$ is an $H^s$-valued $\mathcal{F}_0$ measurable random variable satisfying $\E\|u_0\|^2_{H^s}<\infty$.	If  $u_0$ satisfies
	\begin{equation*} 
	\p\left\{(1-\partial_{xx}^2)u_0(x)>0,\ \forall x\in\T\right\}=p,\ \
	\p\left\{(1-\partial_{xx}^2)u_0(x)<0,\ \forall x\in\T\right\}=q,
	\end{equation*}
	for some $p,q\in[0,1]$, then the corresponding maximal solution $(u,\tau^*)$ to \eqref{DGH linear noise 1} satisfies
	\begin{equation*}
	\p\{\tau^*=\infty\}\geq p+q.
	\end{equation*}
\end{Theorem}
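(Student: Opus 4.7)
My approach uses three ingredients: a Girsanov-type substitution eliminating the stochastic integral, pathwise sign-preservation of the momentum $m=u-u_{xx}$ along classical characteristics, and the $H^1$-conservation identity for CH-type equations that is available precisely when $c_0+\gamma=0$. Combined with the blow-up criterion \eqref{blow-up criterion common}, these will rule out blow-up on the disjoint events $\{m_0>0\text{ on }\T\}$ and $\{m_0<0\text{ on }\T\}$, yielding the lower bound $p+q$.

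Set $\eta(t)=\exp\bigl(-\int_0^tb(s)\,{\rm d}W_s+\tfrac12\int_0^tb^2(s)\,{\rm d}s\bigr)>0$, so that ${\rm d}\eta=-\eta b\,{\rm d}W+\eta b^2\,{\rm d}t$, and let $v=\eta u$, $M=(1-\partial_{xx}^2)v=\eta m$. Under $c_0+\gamma=0$ one has $F_3\equiv 0$, and Itô's product rule cancels the stochastic contributions, giving the pathwise random PDE
\[
\partial_t v+\tfrac{1}{\eta}\bigl[(v-\gamma\eta)\partial_x v+F_1(v)+F_2(v)\bigr]=0.
\]
Applying $(1-\partial_{xx}^2)$ and using the standard CH-type algebraic identity reduces the $M$-equation to the transport form
\[
\partial_t M+(u-\gamma)\partial_x M+2u_x M=0.
\]
Because $s>3$ makes $u_x\in C^1(\T)$ pathwise on $[0,\tau^*)$, the ODE $\dot\phi=u(t,\phi)-\gamma$, $\phi(0,\cdot)=\mathrm{id}$, produces a $C^1$ diffeomorphism flow $\phi(t,\cdot):\T\to\T$, and integration along it gives
\[
M(t,\phi(t,x))=m_0(x)\exp\!\Bigl(-2\int_0^t u_x(s,\phi(s,x))\,{\rm d}s\Bigr).
\]
Since $\eta>0$ and the exponential is positive, $\mathrm{sgn}\,m(t,\cdot)=\mathrm{sgn}\,m_0(\phi^{-1}(t,\cdot))$ for every $t\in[0,\tau^*)$, so the definite sign of $m_0$ is preserved pathwise on the events of interest.

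Next I would apply Itô's formula to $\|u\|_{H^1}^2=\int_\T u\,m\,{\rm d}x$, using $(u,{\rm d}u)_{H^1}=\int_\T m\,{\rm d}u\,{\rm d}x$; the drift $\int_\T m\bigl[(u-\gamma)u_x+F_1(u)+F_2(u)\bigr]\,{\rm d}x$ vanishes under $c_0+\gamma=0$ by self-adjointness of $(1-\partial_{xx}^2)^{-1}$ together with the cancellation of $\int_\T m\,uu_x\,{\rm d}x$ against $\int_\T m\,F_2(u)\,{\rm d}x$. This leaves ${\rm d}\|u\|_{H^1}^2=2b\|u\|_{H^1}^2\,{\rm d}W+b^2\|u\|_{H^1}^2\,{\rm d}t$, so $\|u(t)\|_{H^1}^2=\|u_0\|_{H^1}^2\,{\rm e}^{2\int_0^t b\,{\rm d}W_s-\int_0^t b^2\,{\rm d}s}$, which is a.s.\ finite on every bounded time interval by Assumption \ref{Assumption-3}. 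On $\{m_0>0\text{ on }\T\}$ (resp.\ $\{m_0<0\}$) sign-preservation yields $\|m(t)\|_{L^1}=|\int_\T m\,{\rm d}x|=|\int_\T u\,{\rm d}x|\le C\|u(t)\|_{H^1}$, and since $u=G_\T*m$, $u_x=\partial_xG_\T*m$ with bounded kernels we conclude $\|u(t)\|_{W^{1,\infty}}\lesssim\|u(t)\|_{H^1}$, a.s.\ finite on $[0,\tau^*)$. The blow-up criterion \eqref{blow-up criterion common} then forces $\tau^*=\infty$ on each of these disjoint events, delivering $\p\{\tau^*=\infty\}\ge p+q$. I expect the most delicate point to be the rigorous justification of Itô's formula for $\|u\|_{H^1}^2$ at $H^s$-regularity, handled via cut-offs with the stopping times from Theorem \ref{Local pathwise solution}; the transport/characteristic step is pathwise and essentially classical once the substitution is in place.
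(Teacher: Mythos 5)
Your proposal is correct and follows essentially the same route as the paper: the Girsanov-type transform, sign preservation of the momentum along the characteristics of the transformed random non-autonomous equation, the $H^1$-control $\|u(t)\|_{H^1}=\beta(t)\|u_0\|_{H^1}$, the resulting $W^{1,\infty}$ bound on the sign-definite events, and the blow-up criterion \eqref{blow-up criterion common}. The only differences are cosmetic: the paper derives the $H^1$ law pathwise for the transformed variable $v$ rather than by It\^{o}'s formula on $u$, it bounds $\|v_x\|_{L^\infty}$ via the one-sided kernel identities for $v\pm v_x$ instead of your $\|m\|_{L^1}=\bigl|\int_\T u\,{\rm d}x\bigr|$ argument, and it closes with $\sup_{t>0}\beta<\infty$ a.s.\ from Lemma \ref{eta Lemma}, where your pathwise boundedness of $\beta$ on bounded intervals already suffices.
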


\begin{Remark}

	Theorem \ref{Decay result} provides a global existence result for  initial data with bounded $H^s$-norm  depending on the strength of the noise. This result can \textit{not} be observed in the  deterministic case because $ 0<b_*\leq b^2(t) $ is required  (see Assumption \ref{Assumption-3}). On the other hand, since the proof of Theorem \ref{Global existence result} relies on the analysis of a PDE with random coefficient (see \eqref{periodic Cauchy problem transform} below),
	the deterministic case can be included by formally setting the random coefficient equal  to $1$. Therefore, in this sense, Theorem \ref{Global existence result} covers the corresponding deterministic result, cf. \cite{Liu-2006-MA,Constantin-Escher-1998-CPAM}. Indeed, by letting $\beta\equiv1$ in \eqref{periodic Cauchy problem transform} and taking $(p,q)=(1,0)$ or $(p,q)=(0,1)$ in Theorem \ref{Global existence result}, we
	obtain the global existence for the deterministic DGH equation.

\end{Remark}

According to \eqref{blow-up criterion common} in Theorem \ref{Local pathwise solution}, a blow-up comes along with an explosion of the $W^{1,\infty}$-norm. For the 
special noise in \eqref{DGH linear noise 1} we can improve the result by showing that a blow-up  is related to the first spatial derivative only  and 
corresponds to the wave-breaking phenomenon with exploding negative slope.

\begin{Theorem}[Blow-up scenario]\label{blow-up criterion weak noise}
	Let $c_0+\g=0$, $s>3$ and Assumption \ref{Assumption-3} be satisfied.  Let $\s=\left(\Omega, \mathcal{F},\p,\{\mathcal{F}_t\}_{t\geq0}, W\right)$ be fixed in advance. Let $(u,\tau^*)$ be  the  unique maximal solution to \eqref{DGH linear noise 1} starting from  an $\mathcal{F}_0$ measurable random variable $u_0\in L^2(\Omega;H^s)$.
	 Then the singularities   can arise only in the form of wave breaking, i.e.
	\begin{equation}\label{blow-up by WB 1}
	\p\left\{
	\|u(t)\|_{L^\infty}\lesssim A\|u_0\|_{H^{1}}<\infty,\ \forall \, t>0
	\right\}=1,
	\end{equation}
	where $A=A(\omega)=\sup_{t>0}{\rm e}^{\int_0^tb(t') \ {\rm d} W_{t'}-\int_0^t\frac{b^2(t')}{2} \ {\rm d}t'}<\infty$ $\p-a.s.$, and 
	\begin{equation}\label{blow-up by WB 2}
\textbf{1}_{\left\{\limsup_{t\rightarrow \tau^*}\|u(t)\|_{H^{s}}=\infty\right\}}=\textbf{1}_{\left\{\liminf_{t\rightarrow \tau^*}\left[\min_{x\in\T}u_x(t,x)\right]=-\infty\right\}}\ \ \p-a.s.
	\end{equation}
\end{Theorem}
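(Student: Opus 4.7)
The plan is to introduce the Girsanov-type process
\[
\beta(t) = \exp\left(\int_0^t b(t')\, {\rm d}W_{t'} - \tfrac12\int_0^t b^2(t')\, {\rm d}t'\right),
\]
which by Lemma \ref{eta Lemma} satisfies $A := \sup_{t\ge 0}\beta(t) < \infty$ $\p$-a.s., and to reduce the stochastic problem to a pathwise PDE by setting $v=u/\beta$. For \eqref{blow-up by WB 1}, I would apply It\^o's formula to $\|u\|_{H^1}^2 = \int_{\T}(u^2+u_x^2)\,{\rm d}x$. Pairing the drift with $m=u-u_{xx}$ (after an integration by parts turning $\int u_x A_x\,{\rm d}x$ into $-\int u_{xx}A\,{\rm d}x$) and using the Camassa--Holm-type cancellation
\[
\int_{\T}\left[(u-\g)u_x + F_1(u)+F_2(u)\right] m\, {\rm d}x = 0 \qquad (c_0+\g=0),
\]
in which the cubic terms $\tfrac12\int u_x^3\, {\rm d}x$ produced by the convective term and by $F_2$ exactly cancel (while $\int F_1\cdot m\,{\rm d}x=\int\gamma u_x\cdot m\,{\rm d}x=0$), one obtains the linear geometric It\^o SDE
\[
{\rm d}\|u\|_{H^1}^2 = b^2\|u\|_{H^1}^2\, {\rm d}t + 2b\|u\|_{H^1}^2\, {\rm d}W.
\]
Its explicit exponential solution is $\|u(t)\|_{H^1}^2 = \beta(t)^2\|u_0\|_{H^1}^2$, and the Sobolev embedding $H^1(\T)\hookrightarrow L^\infty(\T)$ produces $\|u(t)\|_{L^\infty}\lesssim \beta(t)\|u_0\|_{H^1}\le A\|u_0\|_{H^1}$, which is \eqref{blow-up by WB 1}.

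For \eqref{blow-up by WB 2}, the idea is to eliminate the noise via the transformation $v=u/\beta$. A direct It\^o computation (using ${\rm d}\beta = b\beta\,{\rm d}W$, $\beta$ being spatially constant, together with the SDE satisfied by $u$) shows that $v$ solves, pathwise on $[0,\tau^*)$, the random-coefficient but noiseless PDE
\[
v_t + (\beta v - \g)v_x + \beta\left(F_1(v)+F_2(v)\right) = 0,
\]
with the conservation $\|v(t)\|_{H^1} = \|u_0\|_{H^1}$ inherited from the previous step. Introducing characteristics $\phi(t,x)$ by $\dot\phi = \beta(t)v(t,\phi)-\g$, $\phi(0,x)=x$, differentiating the $v$-equation in $x$, and using the Fourier identity $(1-\partial_{xx}^2)^{-1}\partial_{xx} = -I + (1-\partial_{xx}^2)^{-1}$, I find that $V(t,x):=v_x(t,\phi(t,x))$ satisfies the Riccati-type ODE
\[
\dot V = -\tfrac{\beta(t)}{2}\,V^2 + R(t,x),\qquad |R(t,x)|\le C(\|u_0\|_{H^1})\,\beta(t).
\]

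A pointwise comparison on this ODE (whenever $V(t,x)$ is positive and sufficiently large, the quadratic damping $-\beta V^2/2$ dominates the bounded forcing, forcing $V$ to decrease) yields $\sup_{x\in\T} v_x(t,x) \le C'(\|u_0\|_{H^1},\omega)$ uniformly in $t\in[0,\tau^*)$. Since $u_x=\beta v_x$ and $\beta\le A$ a.s., the same one-sided bound holds for $\sup_x u_x$; combined with the $L^\infty$ estimate \eqref{blow-up by WB 1}, any blow-up of $\|u\|_{W^{1,\infty}}$ must come from $\min_{x\in\T} u_x\to-\infty$, and the blow-up criterion \eqref{blow-up criterion common} then yields \eqref{blow-up by WB 2}. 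The main technical obstacle is the rigorous derivation of the Riccati-type ODE along the random characteristics $\phi$ and making sure that positivity and a.s.~continuity of $\beta$ on $[0,\tau^*)$ keep the coefficient $\beta(t)$ away from $0$, so the one-sided bound survives up to the random blow-up time.
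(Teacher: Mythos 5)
Your proposal is correct, and for \eqref{blow-up by WB 2} it takes a genuinely different route than the paper. For \eqref{blow-up by WB 1} the substance is the same: your It\^o computation for $\|u\|_{H^1}^2$ (with the cancellation $\int_{\T}[(u-\g)u_x+F(u)](1-\partial_{xx}^2)u\,{\rm d}x=0$ when $c_0+\g=0$) gives $\|u(t)\|_{H^1}=\beta(t)\|u_0\|_{H^1}$, which is exactly the paper's conservation law \eqref{H1 conservation} for $v=u/\beta$, obtained there pathwise by multiplying the noiseless $v$-equation \eqref{random v equation} by $v$; combined with $H^1\hookrightarrow L^\infty$ and $A=\sup_t\beta<\infty$ a.s.\ (Lemma \ref{eta Lemma}\,\ref{eta->0}) this is the paper's argument. (If you insist on It\^o applied directly to $u$, localize with the stopping times $\tau_m$; alternatively the pathwise route through $v$ avoids this entirely.) For \eqref{blow-up by WB 2}, the paper argues by contradiction: assuming $u_x>-K$ on $[0,\tau^*)$, it estimates the momentum $V=v-v_{xx}$ via $\frac{{\rm d}}{{\rm d}t}\|V\|_{L^2}^2=-3\beta\int_{\T}V^2v_x\,{\rm d}x\le 3K\|V\|_{L^2}^2$, deduces an $H^2$ (hence $W^{1,\infty}$) bound, and contradicts maximality through \eqref{blow-up criterion common}. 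You instead prove an unconditional one-sided bound: differentiating the $v$-equation, writing $(1-\partial_{xx}^2)^{-1}\partial_{xx}=-I+(1-\partial_{xx}^2)^{-1}$ and bounding $\beta|v^2-G_{\T}*(v^2+\frac12v_x^2)|\lesssim\beta\|u_0\|_{H^1}^2$, you get the Riccati inequality $\dot V\le-\frac{\beta}{2}V^2+C\beta$ along characteristics; since $\beta>0$, comparison gives $\sup_x v_x(t,x)\le\max(\sup_x\partial_xu_0,\sqrt{2C})$ on $[0,\tau^*)$ without any nondegeneracy of $\beta$ (so the concern you raise at the end is moot), hence $\sup_xu_x\le A\,C'$, and together with \eqref{blow-up by WB 1} a blow-up of $\|u\|_{W^{1,\infty}}$ can only come from $\min_xu_x\to-\infty$; the easy converse inclusion follows from \eqref{blow-up criterion common}. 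This is precisely the Riccati machinery the paper only deploys later (Proposition \ref{blow-up sDGH Pro}, at the spatial minimum via Lemma \ref{Constantin-Escher}), applied here to the maximum of the slope. Your route buys a stronger quantitative statement (a global-in-time upper slope bound, the classical deterministic CH mechanism) and avoids the $H^2$/momentum estimate; the paper's route avoids characteristics and the pointwise ODE analysis altogether. Two small points to tidy up: either justify that the flow $\phi(t,\cdot)$ is onto $\T$ (so that $\sup_xV(t,x)=\sup_yv_x(t,y)$; this follows from $q_x>0$ as computed in Section \ref{global existence and wave breaking}), or sidestep characteristics by applying Lemma \ref{Constantin-Escher} to $\max_xv_x=-\min_x(-v_x)$; and note that the constant in your slope bound also involves $\sup_x\partial_xu_0$, which is a.s.\ finite since $u_0\in H^s\hookrightarrow W^{1,\infty}$.
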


%	Beucause
%,  

 Still we have not identified initial data  for  \eqref{DGH linear noise 1} that lead to a blow-up. A precise condition in terms of probability is given in the 
next theorem. To formulate it, we introduce the number $\lambda >0$
  such that for any $f\in H^3$,  the estimate 
	\begin{equation}	\label{f2<f H1} 
	\max_{x\in\T}f^2(x)\leq  \lambda\|f\|^2_{H^1}
	\end{equation}
holds. 

%Now we consider th blow-up/wave-breaking of the solution to \eqref{DGH linear noise 1}.
\begin{Theorem}[Wave breaking and its probability]\label{Wave breaking} Let $\s=(\Omega, \mathcal{F},\p,\{\mathcal{F}_t\}_{t\geq0}, W)$ be a fixed stochastic basis, $c_0+\g=0$ and $s>3$. Let Assumption \ref{Assumption-3} be verified and let $u_0\in L^2(\Omega;H^s)$ be  $\mathcal{F}_0$ measurable. If for some $c\in (0,1)$,
	$$\min_{x\in\T}\partial_xu_0(x)<-\frac{1}{2}\sqrt{\frac{(b^*)^2}{c^2}+4\lambda\|u_0\|_{H^1}^2}-\frac{b^*}{2c}\ \ \p-a.s.,$$
	where $b^*$ is given in Assumption \ref{Assumption-3} and $\lambda$ is given in \eqref{f2<f H1},
	then the maximal solution $(u,\tau^*)$ to \eqref{DGH linear noise 1} $($or \eqref{DGH linear noise 2}, equivalently$)$ satisfies
	$$\p\left\{
	\tau^*<\infty
	\right\}\geq\p\left\{{\rm e}^{\int_0^t b(t') \, {\rm d}W_{t'}}>c \, \,  \forall\,  t>0\right\}>0.$$
	By Theorem \ref{blow-up criterion weak noise}, we have $\displaystyle\p\left\{u\ {\rm  breaks\ in\ finite\ time}\right\}\geq\p\left\{{\rm e}^{\int_0^t b(t')\, {\rm d}W_{t'}}>c\,\, \forall\,t>0\right\}>0.$
\end{Theorem}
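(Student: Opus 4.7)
The plan is to eliminate the stochastic integral in \eqref{DGH linear noise 1} via a Girsanov-type transformation, reducing it to a transport-type PDE whose only randomness lies in a positive scalar coefficient, and then to run a Constantin--Escher-style wave-breaking analysis along characteristics. Set $\beta(t)={\rm e}^{\int_0^tb(t'){\rm d}W_{t'}-\frac12\int_0^tb^2(t'){\rm d}t'}$ (the Girsanov exponential of Lemma \ref{eta Lemma}) and $v=\beta^{-1}u$. Because $c_0+\gamma=0$ kills $F_3$ and makes $F_1,F_2$ exactly quadratic, one has $F(\beta v)=\beta^2 F(v)$; combined with It\^{o}'s product formula this yields the deterministic-in-the-PDE-sense equation
\begin{equation*}
v_t+(\beta v-\gamma)v_x+\beta F(v)=0.
\end{equation*}
In parallel, applying It\^{o} to $\|u\|_{H^1}^2$ and exploiting the standard DGH cancellation $(u,(u-\gamma)u_x+F(u))_{H^1}=0$ (again needing $c_0+\gamma=0$) together with the scalar linear noise gives $d\|u\|_{H^1}^2=2b\|u\|_{H^1}^2\,{\rm d}W+b^2\|u\|_{H^1}^2\,{\rm d}t$, so that $\|u(t)\|_{H^1}=\beta(t)\|u_0\|_{H^1}$ and hence $\|v(t)\|_{H^1}\equiv\|u_0\|_{H^1}$ is pathwise conserved.

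Next, I would differentiate the $v$-equation in $x$, use $(1-\partial_{xx}^2)^{-1}\partial_{xx}=-I+(1-\partial_{xx}^2)^{-1}$ to rewrite the nonlocal term, and specialize to the characteristic $\xi'(t)=\beta(t)v(t,\xi(t))-\gamma$ that tracks a spatial minimum of $v_x$. At such a point $v_{xx}(\xi)=0$, the bound $v^2(\xi)\le\lambda\|v\|_{H^1}^2=\lambda\|u_0\|_{H^1}^2$ follows from \eqref{f2<f H1}, and the convolution term $G_{\T}\ast(v^2+v_x^2/2)$ enters with the favorable sign; the net outcome is the Riccati-type pathwise differential inequality
\begin{equation*}
\frac{dM}{dt}\le \beta(t)\left(-\tfrac12 M^2+\lambda\|u_0\|_{H^1}^2\right),\qquad M(t):=\min_{x\in\T}v_x(t,x),
\end{equation*}
with $M(0)=\min_{x\in\T}u_0'(x)$ controlled by the hypothesis.

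Now I would analyze this inequality on the event $E=\{{\rm e}^{\int_0^tb(t'){\rm d}W_{t'}}>c\}$ (interpreted over the horizon produced by the blow-up step below). On $E$, Assumption \ref{Assumption-3} yields $\beta(t)>c\,{\rm e}^{-b^*t/2}$, so the time change $\tau(t):=\int_0^t\beta(s)\,{\rm d}s$ is strictly increasing with $\lim_{t\to\infty}\tau(t)\ge 2c/b^*$. In $\tau$-time the inequality becomes autonomous, $dM/d\tau\le -M^2/2+\lambda\|u_0\|_{H^1}^2$, an autonomous Riccati whose explicit phase-plane analysis, using the factorization $-M^2/2+\lambda\|u_0\|_{H^1}^2=-\tfrac12(M-\sqrt{2\lambda}\|u_0\|_{H^1})(M+\sqrt{2\lambda}\|u_0\|_{H^1})$, yields a computable upper bound on the rescaled blow-up time $\tau^*$. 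The quadratic threshold in the hypothesis coincides with the positive root of $\mu^2-(b^*/c)\mu-\lambda\|u_0\|_{H^1}^2=0$, and it is engineered so that $M(0)<-\mu$ simultaneously forces (i) the initial value to sit below the Riccati equilibrium $-\sqrt{2\lambda}\|u_0\|_{H^1}$, guaranteeing blow-up in $\tau$-time, and (ii) $\tau^*<2c/b^*$, guaranteeing blow-up in original time. Inverting the continuous increasing map $\tau$ then gives a finite $t^*<\infty$ with $M(t)\to-\infty$, and because $\beta>0$ pathwise, $u_x=\beta v_x\to-\infty$ at $t^*$ as well; by Theorem \ref{blow-up criterion weak noise} this is wave breaking, so $\tau^*<\infty$ on $E$.

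Finally, positivity $\p(E)>0$ follows because only the restriction of $E$ to the finite, path-independent horizon $[0,t^*]$ produced above is actually needed, and on that bounded interval continuity of the Brownian integral together with standard exponential-martingale estimates (cf.\ Lemma \ref{eta Lemma}) provides the desired positive lower bound. The main obstacle I foresee is pinning down the sharp threshold: the crude separation-of-variables estimate $\tau^*\le 2/(-M(0)-\sqrt{2\lambda}\|u_0\|_{H^1})$ only yields the looser condition $M(0)<-\sqrt{2\lambda}\|u_0\|_{H^1}-b^*/c$, so recovering the precise quadratic expression in the hypothesis requires carefully combining the nonlinear Riccati growth with the lower bound on $\int_0^\infty\beta\,{\rm d}s$ in a coupled manner, rather than treating the two effects independently.
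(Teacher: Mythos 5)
Your skeleton (Girsanov transform $v=\beta^{-1}u$, pathwise conservation of $\|v\|_{H^1}$, tracking $M(t)=\min_{x\in\T}v_x(t,x)$ via the Constantin--Escher lemma, a pathwise Riccati-type inequality, and restriction to an event on which $\beta(t)\geq c\,{\rm e}^{-b^*t/2}$ so that $\int_0^\infty\beta\,{\rm d}t\geq 2c/b^*$) is exactly the paper's route (Proposition \ref{pathwise solutions v} together with Propositions \ref{blow-up sDGH Pro} and \ref{blow-up sDGH Pro 2}). However, as written your argument does not prove the theorem under its stated hypothesis, and you concede as much at the end. Two concrete ingredients are missing. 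First, you discard the convolution term, which costs a factor of two: the paper uses $G_{\T}*\bigl(v^2+\frac12 v_x^2\bigr)\geq\frac12 v^2$ (inequality \eqref{blow-up DGH G*>}) to cancel half of $v^2(t,z(t))$ in \eqref{dM equation}, yielding $\frac{{\rm d}}{{\rm d}t}M\leq\beta\bigl(N-\frac12M^2\bigr)$ with $N=\frac{\lambda}{2}\|u_0\|_{H^1}^2$, whereas your inequality carries $\lambda\|u_0\|_{H^1}^2$ in place of $N$. This is not cosmetic: when $b^*/c$ is small the hypothesis only guarantees $M(0)<-\sqrt{\lambda}\,\|u_0\|_{H^1}$ approximately, so $M(0)$ need not lie below your Riccati equilibrium $-\sqrt{2\lambda}\,\|u_0\|_{H^1}$, and your comparison then yields no blow-up at all; your claim (i) that the hypothesis forces the data below $-\sqrt{2\lambda}\|u_0\|_{H^1}$ is false. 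Consistently, the quadratic $\mu^2-(b^*/c)\mu-\lambda\|u_0\|_{H^1}^2$ that you correctly match to the hypothesis equals $\mu^2-(b^*/c)\mu-2N$, i.e.\ it corresponds to the Riccati with the constant $N$, not with yours.

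Second, you never supply the quantitative step converting the Riccati inequality plus the lower bound on $\int\beta$ into the stated algebraic threshold; you explicitly flag it as an unresolved obstacle. The paper resolves it as follows, and you should adopt this (or carry out an exact Riccati blow-up-time computation with the correct constant $N$, which also works but must actually be done --- your separation-of-variables bound only gives the strictly stronger condition you quote): first show, as in \eqref{M<0}, that $M(0)<-\sqrt{2N}$ forces $M$ to be nonincreasing, hence $M^2(t)\geq M^2(0)$; then rewrite \eqref{blow-up DGH dM 1} as $\frac{{\rm d}}{{\rm d}t}M\leq-\frac{\beta}{2}\bigl(1-\frac{2N}{M^2(0)}\bigr)M^2$, divide by $M^2$ and integrate to obtain $-\frac{1}{M(0)}\geq\bigl(\frac12-\frac{N}{M^2(0)}\bigr)\int_0^{\tau^*}\beta\,{\rm d}t$; on $\{\beta(t)\geq c{\rm e}^{-b^*t/2}\ \forall t\}$ the right-hand side is at least $\bigl(\frac12-\frac{N}{M^2(0)}\bigr)\frac{2c}{b^*}\bigl(1-{\rm e}^{-b^*\tau^*/2}\bigr)$, and the stated hypothesis is precisely equivalent to $M^2(0)+\frac{b^*}{c}M(0)-2N>0$, which makes $\tau^*=\infty$ impossible. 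A minor further point: your closing remark replaces the event $\{{\rm e}^{\int_0^tb\,{\rm d}W}>c\ \forall t>0\}$ by a finite-horizon restriction; this still yields $\p\{\tau^*<\infty\}\geq\p\{{\rm e}^{\int_0^tb\,{\rm d}W}>c\ \forall t>0\}$, since the all-time event is contained in the restricted one, but it does not by itself justify positivity of the all-time event appearing in the statement, and the horizon you produce is only $\F_0$-measurable (not path-independent) when $u_0$ is random, so this part needs a cleaner argument than the one sketched.
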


\begin{Remark}
	Whereas Theorem \ref{Decay result} provides a global existence result, Theorem \ref{Wave breaking} detects the formation of singularities in finite time under certain conditions on the initial data.  We stress that these two conditions are mutually exclusive.
	  In Theorem \ref{Decay result} we suppose  $\|u_0\|_{H^s}\leq\frac{b_*}{CK\lambda_1 R}$ with $C>1,\lambda_1>2$ and $R>1$ almost surely.  Then $u$ satisfies \eqref{Decay statement}. In Theorem \ref{Wave breaking} we suppose for some $c\in (0,1)$ that  $\min_{x\in\T}\partial_xu_0(x)<-\frac{1}{2}\sqrt{\frac{(b^*)^2}{c^2}+4\lambda\|u_0\|_{H^1}^2}-\frac{b^*}{2c}<-\frac{b^*}{2c}$ almost surely holds. But this means   $\|u_0\|_{H^s}>\frac{1}{K}\|u_0\|_{W^{1,\infty}}\geq\frac{1}{K}|\min_{x\in\T}\partial_xu_0(x)|> \frac{b^*}{2cK}>\frac{b_*}{CK\lambda_1 R}$.
\end{Remark}

We conclude this section with a result refining  Theorem \ref{blow-up criterion weak noise}. It is possible to quantify 
the blow-up rate.
\begin{Theorem}[Wave breaking rate]\label{Blow-up rate}
	Let the conditions in Theorem \ref{blow-up criterion weak noise} hold true. Then
	\begin{equation}\label{Blow-up rate equation}
	\lim_{t\rightarrow \tau^*} \left( \min_{x\in\T}[u_x(t,x)]\int_{t}^{\tau^*}\beta(t')\ {\rm d}t'\right) =-2\beta(\tau^*) 
	\ \ \text{a.e.\ on}\ \ \{\tau^*<\infty\},
	\end{equation}
	where 
	\begin{equation*}\label{beta}\beta(\omega,t)={\rm e}^{\int_0^tb(t') {\rm d} W_{t'}-\int_0^t\frac{b^2(t')}{2} {\rm d}t'}.\end{equation*}
\end{Theorem}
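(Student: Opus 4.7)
The plan is to apply the Girsanov-type transformation $v=u/\beta$ (hinted at in Remark \ref{Remark assumption explanation}) to convert the SPDE \eqref{DGH linear noise 1} into a random PDE with $C^{2}$ spatial regularity, and then run the classical Constantin--Escher blow-up rate argument pathwise. Using $c_0+\gamma=0$ and It\^o's formula one checks that $v$ solves
\begin{equation*}
\partial_t v+\beta(t)\bigl[vv_x+F_0(v)\bigr]-\gamma v_x=0,\qquad v(0,\cdot)=u_0,
\end{equation*}
where $F_0(v)=(1-\partial_{xx}^2)^{-1}\partial_x\bigl(v^2+\tfrac{1}{2}v_x^2\bigr)$. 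Testing against $v-v_{xx}$ gives the pathwise conservation $\|v(t)\|_{H^1}^2=\|u_0\|_{H^1}^2$ on $[0,\tau^*)$. Because $s>3$ implies $v\in C^2(\T)$ sample-path-wise, the function $m(t):=\min_{x\in\T}v_x(t,x)$ is locally Lipschitz in $t$ and one may fix a measurable selector $\xi(t)$ of the minimizer, at which $v_{xx}(t,\xi(t))=0$. Differentiating the $v$-equation once in $x$, evaluating at $\xi(t)$, and invoking the Danskin/Constantin--Escher envelope lemma yields
\begin{equation*}
\frac{dm}{dt}=-\frac{\beta(t)}{2}m^{2}+\beta(t)R(t)\qquad\text{a.e.~on }[0,\tau^{*}),
\end{equation*}
where $R(t)=v^{2}(t,\xi(t))-\bigl[(1-\partial_{xx}^{2})^{-1}(v^{2}+\tfrac{1}{2}v_{x}^{2})\bigr](t,\xi(t))$ is bounded by $C\|u_{0}\|_{H^{1}}^{2}$, thanks to the $H^{1}$ conservation, the embedding $H^{1}\hookrightarrow L^{\infty}$, and the boundedness of the kernel $G_{\T}$ in \eqref{Helmboltz operator}.

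Next I restrict to the event $\{\tau^{*}<\infty\}$. By \eqref{blow-up by WB 2} in Theorem \ref{blow-up criterion weak noise}, $\liminf_{t\to\tau^{*}}\min_{x}u_{x}(t,x)=-\infty$, and since $\min_{x}u_{x}(t,\cdot)=\beta(t)m(t)$ with $\beta$ continuous and strictly positive on the compact interval $[0,\tau^{*}]$, this forces $m(t)\to-\infty$. Dividing the ODE by $m^{2}>0$ gives
\begin{equation*}
-\frac{d}{dt}\!\left(\frac{1}{m(t)}\right)=-\frac{\beta(t)}{2}+\frac{\beta(t)R(t)}{m(t)^{2}}=-\frac{\beta(t)}{2}\bigl(1+o(1)\bigr)\qquad\text{as }t\to\tau^{*},
\end{equation*}
since $|R(t)|$ stays bounded while $m(t)^{2}\to+\infty$. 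Integrating from $t$ to $\tau^{*}$ and using $1/m(\tau^{*})=0$ I obtain
\begin{equation*}
-\frac{1}{m(t)}=\frac{1+o(1)}{2}\int_{t}^{\tau^{*}}\beta(s)\,ds,
\end{equation*}
so that $m(t)\int_{t}^{\tau^{*}}\beta(s)\,ds\to-2$. Multiplying by $\beta(t)\to\beta(\tau^{*})$, which follows from pathwise continuity of the stochastic exponential on the finite interval $[0,\tau^{*}]$, and using once more $\min_{x}u_{x}=\beta m$, gives exactly \eqref{Blow-up rate equation}.

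The main technical obstacle is the rigorous justification of the Riccati-type ODE for $m(t)$: one must verify that the minimizer $\xi(t)$ can be chosen measurably in $(\omega,t)$ and that $m$ is absolutely continuous on $[0,\tau^{*})$ with derivative $\partial_{t}v_{x}(t,\xi(t))$. This is a standard envelope argument in the $C^{2}$ category, but requires some care in the present random setting where the flow is generated by the non-adapted coefficient $\beta(t)$. Once this step is in place, the remaining asymptotic analysis of the Riccati equation is elementary, and continuity of $\beta$ up to $\tau^{*}$ on $\{\tau^{*}<\infty\}$ is automatic from the properties of the Girsanov exponential.
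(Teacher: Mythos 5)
Your proposal follows essentially the same route as the paper: the Girsanov-type transform $v=u/\beta$, the pathwise $H^1$-conservation, the Constantin--Escher lemma (Lemma \ref{Constantin-Escher}) applied pathwise to $m(t)=\min_{x\in\T}v_x(t,x)$ to get the Riccati-type relation with a remainder bounded by $C\|u_0\|_{H^1}^2$ (this is exactly \eqref{dM equation} and \eqref{M estimate blow-up rate} in the paper), and then integration of $1/m$ up to $\tau^*$ followed by multiplication with $\beta(t)\to\beta(\tau^*)$. The overall structure is correct and matches the paper's argument.

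There is, however, one step whose justification as written does not work: you claim that \eqref{blow-up by WB 2}, i.e. $\liminf_{t\to\tau^*}\min_x u_x(t,x)=-\infty$, together with continuity and positivity of $\beta$ on $[0,\tau^*]$, ``forces $m(t)\to-\infty$.'' It only gives $\liminf_{t\to\tau^*}m(t)=-\infty$, and the rest of your argument genuinely needs the full limit: you use $m(t)^2\to+\infty$ uniformly near $\tau^*$ to make the $o(1)$ in $\beta R/m^2$ integrable up to $\tau^*$, and you use $1/m(\tau^*)=0$ when integrating. The upgrade from $\liminf$ to $\lim$ is not automatic from continuity of $\beta$; it comes from the Riccati structure itself: once $m(t_0)<0$ with $m^2(t_0)>K/\varepsilon$ (where $K=C\|u_0\|_{H^1}^2$), the inequality $m'\le -\tfrac{\beta}{2}m^2+\beta K$ forces $m'<0$ there, so $m$ stays below $-\sqrt{K/\varepsilon}$ and is nonincreasing on $[t_0,\tau^*)$; combined with the $\liminf$ statement this yields $m(t)\to-\infty$, the uniform lower bound $m^2>K/\varepsilon$ on $[t_0,\tau^*)$, and $1/m(t)\to 0$. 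This is exactly the paper's persistence step \eqref{M^2 lower bound}, proved ``similar to the proof of \eqref{M<0}.'' You have all the ingredients (the Riccati equation is already in your write-up), but the step must be made explicit, since without it the passage from the blow-up scenario of Theorem \ref{blow-up criterion weak noise} to the asymptotics of $1/m$ is incomplete. The remaining points you flag (measurable selection of the minimizer and absolute continuity of $m$) are indeed handled by Lemma \ref{Constantin-Escher} pathwise, and adaptedness of $\beta$ plays no role there since the argument is performed for fixed $\omega$.
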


\begin{Remark}\label{rem_deterministic}
	As a corollary of  Theorems \ref{blow-up criterion weak noise}  and  \ref{Blow-up rate}, we have  that as long as singularities occurs, they can arise only in the form of wave breaking and the breaking rate is given  by \eqref{Blow-up rate equation}. This result is optimal in the sense that it is consistent with the result for the corresponding 
	deterministic case. Indeed, for the deterministic DGH equation, the blow-up rate is, \cite[Theorem 4.2]{Liu-2006-MA},
	\begin{equation*}
	\lim_{t\rightarrow \tau^*} \left( \min_{x\in\T}[u_x(t,x)](\tau^*-t)\right)
	=-2.
	\end{equation*}
	Formally, since the deterministic DGH equation can be viewed as \eqref{periodic Cauchy problem transform} with $\beta\equiv1$, we see that the blow-up estimate \eqref{Blow-up rate equation} coincides with the above deterministic result when $\beta\equiv1$. 
\end{Remark}

\begin{Remark}\label{girsanov}
 Let us make a comment on the idea for the subsequent analysis of  
	 \eqref{DGH linear noise 1} which is 
	motivated by \cite{GlattHoltz-Vicol-2014-AP,Rockner-Zhu-Zhu-2014-SPTA,Tang-2018-SIMA}. %, where the linear noise $\beta u {\rm d}W$ with %$\beta\in\R\setminus\{0\}$ is considered.  
	 By introducing the Girsanov-type transformation
	\begin{align*}
	v=\frac{1}{\beta(\omega,t)} u,\ \
	\beta(\omega,t)={\rm e}^{\int_0^tb(t') {\rm d} W_{t'}-\int_0^t\frac{b^2(t')}{2} {\rm d}t'},
	\end{align*}
	we obtain an equation for $v$ (see Section \ref{global existence and wave breaking} for the detailed calculation), namely
	$$
	v_t+\beta vv_x-\g v_x+\beta (1-\partial_{xx}^2)^{-1}\partial_x\left(v^2+\frac{1}{2} v_x^2\right) +(c_0+\g)(1-\partial_{xx}^2)^{-1}\partial_xv=0.$$
Although  the above equation for $v$ does not depend on a stochastic integral on $v$ itself, to extend the deterministic results to the stochastic setting, we need to overcome a few technical difficulties since the system is not only random but also non-autonomous (see e.g., \eqref{global time tau}, \eqref{tau 2 Girsanov} and \eqref{tau 1 > tau 2}). With the help of certain estimates and asymptotic limits of  Girsanov-type processes (see Lemma \ref{eta Lemma}), we are able to apply the energy estimate pathwisely (for a.e. $\omega\in\Omega$) to study the global existence and possible blow-up of  solutions.
\end{Remark}

We outline the rest of the paper. In the next section, we briefly recall some relevant preliminaries. In Section \ref{Local well-posedness}, we prove Theorem \ref{Local pathwise solution}.  For the large noise case, we prove Theorem \ref{Non breaking} in Section \ref{Noise VS blow-up}. For the non-autonomous linear multiplicative noise case, we consider the global existence, decay, wave breaking and the blow-up rate of the pathwise solutions and prove Theorems \ref{Decay result}, \ref{Global existence result}, \ref{Wave breaking} and \ref{Blow-up rate} in Section \ref{global existence and wave breaking}.

\section{Preliminary Results}\label{Preliminary Results}
We summarize  some auxiliary results, which will be used to prove our main results from Section \ref{assumptions definitions and main rsults}. 
Define the regularizing operator $T_\e$ on $\T$ as
\begin{equation}\label{Define Te}
T_\e f(x):=(1-\e^2 \Delta)^{-1}f(x)= \sum_{k\in\Z^d} \left(1+\e^2 |k|^2\right)^{-1}  \widehat{f}(k)\, {\rm e}^{{\rm i}xk},\ \ \e\in(0,1).
\end{equation}
Since $T_\e$ can be characterized by its Fourier multipliers, it is  easy to  see  we have 
\begin{align}
[D^s,T_{\varepsilon}]=0,\label{mollifier property 3}
\end{align}
\begin{align}
(T_{\varepsilon}f, g)_{L^2}&=(f, T_{\varepsilon}g)_{L^2},\label{mollifier property 4}
\end{align}
\begin{align}
\|T_{\varepsilon}u\|_{H^s}&\leq \|u\|_{H^s}.\label{mollifier property 5}
\end{align}
Furthermore, we have  
\begin{Lemma}[\cite{Tang-2020-Arxiv}]\label{Te commutator} 
	Let $f,g:\T\rightarrow\R$ such that $g\in W^{1,\infty}$ and $f\in L^2$. Then for some $C>0$,
	\begin{align*}
	\|[T_{\varepsilon}, (g\cdot \nabla)]f\|_{L^2}
	\leq C\|g\|_{W^{1,\infty}}\|f\|_{L^2}.
	\end{align*}
\end{Lemma}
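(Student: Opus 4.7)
The plan is to exploit the convolution structure of $T_\varepsilon$. Since $T_\varepsilon = (1-\varepsilon^2\partial_{xx}^2)^{-1}$ has a positive Green's function $K_\varepsilon$ on $\T$, analogous to the kernel displayed in \eqref{Helmboltz operator}, with $\|K_\varepsilon\|_{L^1(\T)} = 1$, I would write $T_\varepsilon f(x) = \int_\T K_\varepsilon(x-y) f(y)\,dy$ and expand the commutator as
\begin{equation*}
[T_\varepsilon, g\partial] f(x) = \int_\T K_\varepsilon(x-y)\, g(y)\, \partial_y f(y)\,dy - g(x)\int_\T \partial_x K_\varepsilon(x-y)\, f(y)\,dy.
\end{equation*}

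Using $\partial_x K_\varepsilon(x-y) = -\partial_y K_\varepsilon(x-y)$ and integrating by parts in the first integral (periodic boundary contributions vanish), this rearranges into
\begin{equation*}
[T_\varepsilon, g\partial] f(x) = -\int_\T \partial_y K_\varepsilon(x-y)\bigl[g(y)-g(x)\bigr]f(y)\,dy - \int_\T K_\varepsilon(x-y)\,\partial_y g(y)\,f(y)\,dy =: I_1(x)+I_2(x).
\end{equation*}
For $I_2$, Young's convolution inequality combined with $\|K_\varepsilon\|_{L^1}=1$ gives $\|I_2\|_{L^2}\leq \|\partial g\|_{L^\infty}\|f\|_{L^2}$. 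For $I_1$, the mean value theorem yields $|g(x)-g(y)|\leq \|\partial g\|_{L^\infty}\,d_\T(x,y)$, where $d_\T$ denotes the geodesic distance on $\T$, and another application of Young's inequality gives $\|I_1\|_{L^2}\leq \|\partial g\|_{L^\infty}\bigl\|d_\T(0,\cdot)\,\partial K_\varepsilon\bigr\|_{L^1}\|f\|_{L^2}$.

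The main obstacle, and essentially the only nontrivial step, is establishing the $\varepsilon$-uniform bound $\bigl\|d_\T(0,\cdot)\,\partial K_\varepsilon\bigr\|_{L^1(\T)}\leq C$ for all $\varepsilon\in(0,1)$. I would verify this via the explicit formula for $K_\varepsilon$: near the origin $K_\varepsilon(x)$ behaves like $(2\varepsilon)^{-1}e^{-|x|/\varepsilon}$ (up to smooth periodic corrections), so $|\partial K_\varepsilon(x)|\lesssim \varepsilon^{-2} e^{-|x|/\varepsilon}$, and the change of variable $u=|x|/\varepsilon$ reduces the integral to $\int_0^\infty u\, e^{-u}\,du = 1$, uniformly in $\varepsilon$. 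A Fourier-multiplier estimate on the symbol $(1+\varepsilon^2k^2)^{-1}$ and its derivative would yield the same bound. Combining the estimates for $I_1$ and $I_2$ and absorbing $\|\partial g\|_{L^\infty}\leq \|g\|_{W^{1,\infty}}$ produces the claimed inequality with a constant $C$ independent of $\varepsilon$.
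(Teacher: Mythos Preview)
The paper does not supply its own proof of this lemma; it merely cites \cite{Tang-2020-Arxiv}. Hence there is no in-paper argument to compare against directly. Your kernel-based approach is correct and is the natural way to prove the estimate.

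A few remarks on the details. Your integration-by-parts step is valid because $K_\varepsilon$ is continuous on $\T$ (only its derivative has a jump), so no boundary or delta term appears; for $f\in L^2$ one argues by density from smooth $f$. The identity $d_\T(x,y)=d_\T(0,x-y)$ is what makes $I_1$ a genuine convolution and allows Young's inequality. The crucial uniform bound $\|d_\T(0,\cdot)\,\partial K_\varepsilon\|_{L^1(\T)}\leq C$ can in fact be computed exactly from the explicit periodic kernel
\[
K_\varepsilon(x)=\frac{\cosh\bigl((x-\pi)/\varepsilon\bigr)}{2\varepsilon\sinh(\pi/\varepsilon)},\qquad x\in(0,2\pi),
\]
which yields $\|d_\T(0,\cdot)\,\partial K_\varepsilon\|_{L^1(\T)}=1-\dfrac{\pi}{\varepsilon\sinh(\pi/\varepsilon)}<1$ for all $\varepsilon>0$. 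This both confirms your heuristic $\R$-kernel computation and removes the need to invoke ``smooth periodic corrections'' informally. The aside about a Fourier-multiplier argument is less transparent here and not needed; the explicit kernel computation is cleaner.
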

The following estimates are classical for Sobolev spaces.
\begin{Lemma}[\cite{Kato-Ponce-1988-CPAM}]\label{Kato-Ponce commutator estimate} Let $s>1$.
	There is a $C_s>0$ such that for all $f\in H^s\cap W^{1,\infty},\ g\in H^{s-1}\cap L^{\infty}$ we have
	$$
	\|\left[D^s,f\right]g\|_{L^2}\leq C_s \big(\|D^sf\|_{L^2}\|g\|_{L^{\infty}}+\|\partial_xf\|_{L^{\infty}}\|D^{s-1}g\|_{L^2}\big).
	$$
\end{Lemma}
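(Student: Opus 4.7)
The plan is to apply a Littlewood-Paley frequency decomposition together with Bony's paraproduct calculus on $\T$, which is the standard route to Kato-Ponce type commutator estimates. Fix a smooth dyadic partition of unity $1 = \widehat{\chi}(\xi)+\sum_{j\geq 0}\widehat{\varphi}(2^{-j}\xi)$ on the Fourier side and let $\Delta_j$, $S_j := \sum_{k\leq j}\Delta_k$ denote the associated frequency cut-offs. Writing $f=\sum_j \Delta_j f$ and $g=\sum_k\Delta_k g$ and splitting the product via Bony's decomposition,
\begin{equation*}
fg = T_fg + T_gf + R(f,g),\ \ T_fg := \sum_j S_{j-N_0}f\cdot\Delta_jg,\ \ R(f,g) := \sum_{|j-k|\leq N_0}\Delta_jf\,\Delta_kg,
\end{equation*}
and subtracting the analogous decomposition of $f\cdot D^sg$ from $D^s(fg)$, the commutator $[D^s,f]g$ splits into three pieces corresponding to low-$f$/high-$g$, high-$f$/low-$g$, and resonant high-high interactions. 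I would estimate each piece separately.

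For the low-$f$/high-$g$ paraproduct one has $D^sT_fg - T_fD^sg = \sum_j [D^s, S_{j-N_0}f]\Delta_jg$. Since $S_{j-N_0}f$ carries only frequencies $\lesssim 2^{j-N_0}$ while $\Delta_jg$ is supported near $2^j$, the symbol $(1+\xi^2)^{s/2}-(1+(\xi-\eta)^2)^{s/2}$ relevant to the commutator can be expanded by the mean-value theorem in the low-frequency variable $\eta$, yielding a bilinear multiplier bounded by $|\eta|(1+|\xi|)^{s-1}$. Via the Coifman-Meyer multiplier calculus (or equivalently a smooth-kernel representation) this produces the dyadic bound
\begin{equation*}
\bigl\|[D^s, S_{j-N_0}f]\Delta_jg\bigr\|_{L^2} \lesssim \|\partial_xf\|_{L^\infty}\,\|D^{s-1}\Delta_jg\|_{L^2};
\end{equation*}
square-summing in $j$ and using the Littlewood-Paley characterization of $H^{s-1}$ gives the desired contribution $\|\partial_xf\|_{L^\infty}\|D^{s-1}g\|_{L^2}$. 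For the symmetric paraproduct pieces $D^sT_gf$ and $T_{D^sg}f$ I would estimate each directly: the low-frequency factor $S_{j-N_0}g$ is pointwise controlled by $\|g\|_{L^\infty}$ via Bernstein, and almost-orthogonality of the dyadic blocks (each summand is essentially frequency-localized at $2^j$) combined with the square-summability of $2^{js}\|\Delta_jf\|_{L^2}$ to $\|D^sf\|_{L^2}$ yields the contribution $\|D^sf\|_{L^2}\|g\|_{L^\infty}$.

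The resonant remainder $D^sR(f,g) - R(f,D^sg) = \sum_{|j-k|\leq N_0}[D^s,\Delta_jf]\Delta_kg$ is the most delicate piece, since the output frequencies spread across all dyadic scales below $2^{\max(j,k)}$. Here one extracts $\|\Delta_jf\|_{L^\infty}\lesssim 2^{-j}\|\partial_xf\|_{L^\infty}$ from Bernstein to reduce matters to estimating $\|\partial_xf\|_{L^\infty}\sum_{|j-k|\leq N_0} 2^{(s-1)k}\|\Delta_kg\|_{L^2}$; Cauchy-Schwarz applied within the shell structure imposed by the target frequency of $D^s$ closes the bound, and this is precisely where the hypothesis $s>1$ enters (it forces the geometric decay needed for absolute convergence across scales). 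The main obstacle I anticipate is the first step: the naive estimate $\|D^s(S_{j-N_0}f\cdot\Delta_jg)\|_{L^2}\lesssim 2^{js}\|S_{j-N_0}f\|_{L^\infty}\|\Delta_jg\|_{L^2}$ wastes a full derivative and would only give the weaker combination $\|f\|_{L^\infty}\|g\|_{H^s}$. Capturing the sharp combination $\|\partial_xf\|_{L^\infty}\|D^{s-1}g\|_{L^2}$ requires genuinely using the cancellation built into the commutator, namely the symbol expansion that trades one derivative on $g$ for one derivative on $f$.
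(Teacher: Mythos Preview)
The paper does not supply a proof of this lemma; it is simply quoted with a citation to the original 1988 paper of Kato and Ponce and then used as a black-box tool in subsequent estimates. Your Littlewood--Paley/paraproduct approach is a standard and correct route to this commutator estimate, and is essentially how modern references (e.g.\ in the Bahouri--Chemin--Danchin framework) derive it; the original Kato--Ponce argument proceeds somewhat differently, leaning directly on Coifman--Meyer multilinear multiplier bounds rather than on an explicit Bony decomposition. Your outline of the three pieces is accurate, including the identification of the remainder as the delicate term and the recognition that the low-$f$/high-$g$ paraproduct requires genuine commutator cancellation (the symbol mean-value step) to trade a derivative from $g$ onto $f$. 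One small caveat: your sketch for the remainder is thin --- since the high--high outputs $\Delta_jf\cdot\Delta_kg$ with $j\sim k$ spread across all lower frequencies, plain almost-orthogonality fails and one must further decompose the output frequency and use either a Bernstein $L^1\to L^2$ step or a duality argument to close the $\ell^2$ sum --- and the estimate in fact holds for all $s>0$, so the restriction $s>1$ in the paper's statement is not the sharp threshold for the remainder but merely sufficient for the applications (where $s>3/2$ throughout).
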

\begin{Lemma}[\cite{Kato-Ponce-1988-CPAM}]
	\label{Moser estimate}
	Let $s>0$, then there is a $C_s>0$ such that we have for all $f,g \in H^s\cap L^{\infty}$ the estimate 
	$$\|fg\|_{H^s}\leq C_s \big(\|f\|_{H^s}\|g\|_{L^{\infty}}+\|f\|_{L^{\infty}}\|g\|_{H^s} \big).$$
\end{Lemma}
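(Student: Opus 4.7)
The plan is to prove the Moser estimate via Littlewood--Paley decomposition together with Bony's paraproduct formalism, which is the natural framework since $H^s$ on $\T$ is characterised dyadically by $\|h\|_{H^s}^2 \sim \sum_{j\geq -1} 2^{2js}\|\Delta_j h\|_{L^2}^2$ for the usual frequency projectors $\{\Delta_j\}$ (with partial sums $S_j=\sum_{i<j}\Delta_i$). One writes Bony's decomposition
$$fg \;=\; T_fg + T_gf + R(f,g),$$
where $T_fg:=\sum_j S_{j-N}f\cdot \Delta_j g$ is the paraproduct with $N$ fixed so that each block $S_{j-N}f\cdot \Delta_j g$ has Fourier support concentrated in an annulus of size $2^j$, and $R(f,g):=\sum_{|i-j|\leq N}\Delta_i f\,\Delta_j g$ is the resonant remainder. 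The estimate then splits into three pieces to bound.

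For the paraproduct $T_fg$, the spectral localisation of each summand means the dyadic blocks are almost orthogonal, so
$$\|T_fg\|_{H^s}^2 \;\lesssim\; \sum_j 2^{2js}\|S_{j-N}f\|_{L^\infty}^2\|\Delta_j g\|_{L^2}^2 \;\lesssim\; \|f\|_{L^\infty}^2\|g\|_{H^s}^2,$$
using only the trivial bound $\|S_{j-N}f\|_{L^\infty}\leq C\|f\|_{L^\infty}$. By the symmetric argument, $\|T_gf\|_{H^s}\lesssim \|g\|_{L^\infty}\|f\|_{H^s}$.

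The delicate step is controlling $R(f,g)$: a product $\Delta_if\cdot \Delta_jg$ with $|i-j|\leq N$ is supported in frequencies up to $C\cdot 2^i$, so many such terms contribute to a fixed output block $\Delta_\ell$. Estimating $\|\Delta_i f\|_{L^\infty}\leq \|f\|_{L^\infty}$ and summing only indices $i\geq \ell-N'$ produces
$$2^{\ell s}\|\Delta_\ell R(f,g)\|_{L^2} \;\lesssim\; \|f\|_{L^\infty}\sum_{i\geq \ell-N'} 2^{(\ell-i)s}\bigl(2^{is}\|\Delta_i g\|_{L^2}\bigr),$$
and Young's convolution inequality in the dyadic index then gives $\|R(f,g)\|_{H^s}\lesssim \|f\|_{L^\infty}\|g\|_{H^s}$; symmetrising yields the $\|g\|_{L^\infty}\|f\|_{H^s}$ piece as well. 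Summing the three bounds produces the claimed inequality.

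The main obstacle is precisely the remainder $R(f,g)$: both paraproducts are essentially tautological once the spectral localisation lemma is in hand, but the remainder couples two frequencies that may be much higher than the output frequency $2^\ell$, and the geometric series $\sum_{\ell\leq i}2^{(\ell-i)s}$ is finite if and only if $s>0$. This is what forces the hypothesis $s>0$ and accounts for the failure of the estimate at the $L^\infty$ endpoint.
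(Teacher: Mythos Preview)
Your argument is correct; there is nothing to compare against, since the paper does not prove this lemma but simply quotes it from Kato--Ponce \cite{Kato-Ponce-1988-CPAM} as a standard estimate. The Littlewood--Paley/Bony paraproduct route you take is the now-standard textbook proof (e.g.\ Bahouri--Chemin--Danchin), and you correctly isolate the remainder $R(f,g)$ as the only place where $s>0$ is genuinely used, via the summability of $\sum_{k\geq -N'}2^{-ks}$ in Young's inequality. The original Kato--Ponce argument is organised somewhat differently, going through Coifman--Meyer multilinear operator bounds rather than an explicit paraproduct splitting, but the underlying harmonic-analytic content is the same. One cosmetic point: the bounds $\|S_{j-N}f\|_{L^\infty}\leq C\|f\|_{L^\infty}$ and $\|\Delta_i f\|_{L^\infty}\leq C\|f\|_{L^\infty}$ carry a uniform constant (the $L^1$-norm of the convolution kernel), not $1$; this does not affect the argument.
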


%The following Calderon--Coifman--Meyer type commutator estimate is also useful.
%\begin{Lemma}[Proposition 4.2, \cite{Taylor-2003-PAMS}]\label{Taylor}
%	If $\rho>3/2$ and $0\leq \eta+1\leq \rho$, then for some $c>0$,
%	$$\|[D^{\eta}\partial_x, f]v\|_{L^2}\leq c\|f\|_{H^{\rho}}\|v\|_{H^{\eta}},
%	\ \ \ \ \forall\ f\in H^{\rho}, v\in H^{\eta}.$$
%\end{Lemma}
Specifically, for our problem \eqref{periodic Cauchy problem}, we have introduced the
  nonlocal term $F(\cdot)$ in \eqref{F decomposition}. Using the Moser estimate from Lemma \ref{Moser estimate}, we can obtain the next statement on 
  $F(\cdot)$ (see \cite{Tang-Zhao-Liu-2014-AA}).
\begin{Lemma}\label{F(u) lemma}
	For  $F(\cdot)$ defined in \eqref{F decomposition} and for any $v_1,v_2\in H^s$ with $s>3/2$, we have
	\begin{align*}
	\|F(v)\|_{H^s}&
	\lesssim \left(\|v\|_{L^\infty}+\|\partial_xv\|_{L^\infty}+(c_0+\g)\right)\|v\|_{H^s},
	\ \ s>3/2,\\
	\|F(v_1)-F(v_2)\|_{H^s}&
	\lesssim\left(\|v_1\|_{H^s}+\|v_2\|_{H^s}+(c_0+\g)\right)\|v_1-v_2\|_{H^s},
	\ \ s>3/2,\\
	\|F(v_1)-F(v_2)\|_{H^s}&
	\lesssim\left(\|v_1\|_{H^{s+1}}+\|v_2\|_{H^{s+1}}+(c_0+\g)\right)\|v_1-v_2\|_{H^s},
	\ \ 3/2>s>1/2.
	\end{align*}
\end{Lemma}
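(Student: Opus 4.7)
The proof rests on one structural observation: the operator $\Lambda:=(1-\partial_{xx}^2)^{-1}\partial_x$ has Fourier multiplier $ik/(1+k^2)$, whose modulus is bounded by $(1+k^2)^{-1/2}$, so it is smoothing of order one on the scale $\{H^s\}$, giving
\begin{equation*}
\|\Lambda f\|_{H^s}\leq\|f\|_{H^{s-1}}\quad\text{for every }s\in\R.
\end{equation*}
Via the decomposition \eqref{F decomposition}, each of the three bounds therefore reduces to estimating the $H^{s-1}$-norms of $v^2$, $v_x^2$ and $v$, together with their Lipschitz counterparts.

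For the first (norm) estimate I would apply Lemma \ref{Moser estimate} at regularity $s-1>1/2$ to obtain $\|v^2\|_{H^{s-1}}\lesssim\|v\|_{L^\infty}\|v\|_{H^{s-1}}$ and $\|v_x^2\|_{H^{s-1}}\lesssim\|\partial_x v\|_{L^\infty}\|\partial_x v\|_{H^{s-1}}\leq\|\partial_x v\|_{L^\infty}\|v\|_{H^s}$, while $F_3$ is linear and controlled trivially by $\|v\|_{H^{s-1}}\leq\|v\|_{H^s}$. Summing these contributions and combining with the mapping property of $\Lambda$ yields the first claim.

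For the high-regularity Lipschitz bound ($s>3/2$) I would first factor
\begin{equation*}
v_1^2-v_2^2=(v_1+v_2)(v_1-v_2),\qquad v_{1,x}^2-v_{2,x}^2=(v_{1,x}+v_{2,x})(v_{1,x}-v_{2,x}).
\end{equation*}
Since $s-1>1/2$ the Sobolev embedding $H^{s-1}\hookrightarrow L^\infty$ is available, and a second application of Lemma \ref{Moser estimate} gives $\|(v_1+v_2)(v_1-v_2)\|_{H^{s-1}}\lesssim(\|v_1\|_{H^s}+\|v_2\|_{H^s})\|v_1-v_2\|_{H^s}$ and the analogous estimate for the derivative product, while $F_3$ contributes its linear Lipschitz bound. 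Summing gives the second claim.

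The last assertion, for $1/2<s<3/2$, is the main technical obstacle: the embedding $H^{s-1}\hookrightarrow L^\infty$ is no longer at our disposal, so Moser's inequality alone cannot close the factored products. To circumvent this I would invoke the standard bilinear product estimate
\begin{equation*}
\|fg\|_{H^\sigma}\lesssim\|f\|_{H^{s}}\|g\|_{H^\sigma}\quad\text{valid for }s>1/2,\ |\sigma|\leq s,
\end{equation*}
applied with $\sigma=s-1\in(-1/2,1/2)$ and the sum in the $H^s$-slot; this yields $\|v_1^2-v_2^2\|_{H^{s-1}}\lesssim\|v_1+v_2\|_{H^s}\|v_1-v_2\|_{H^{s-1}}$, while for the derivative product one derivative must be absorbed into the sum factor, producing $\|v_1+v_2\|_{H^{s+1}}\|v_1-v_2\|_{H^s}$. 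This forced loss of one derivative on the sum factor explains precisely the appearance of $H^{s+1}$ in the third bound; after harmonizing with $H^s\hookrightarrow H^{s+1}$ in the $F_1$ and $F_3$ contributions and assembling the three pieces, the claim follows.
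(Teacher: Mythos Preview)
Your argument is correct and matches the paper's approach, which simply notes that the result follows from the Moser estimate (Lemma~\ref{Moser estimate}) together with the smoothing property of $(1-\partial_{xx}^2)^{-1}\partial_x$, and cites \cite{Tang-Zhao-Liu-2014-AA} without further detail. One small slip: in your final sentence the embedding you need to harmonize the $F_1$ and $F_3$ contributions with the $F_2$ term is $H^{s+1}\hookrightarrow H^s$, not the reverse.
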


The following estimate will be used in the proof of the blow-up criterion \eqref{blow-up criterion common} and of Theorem \ref{Non breaking}.
\begin{Lemma}\label{uux+F Hs inner product}
Let $s>3/2$, $c_0,\g\in\R$. Let $F(\cdot)$ and $T_\e$ be given in \eqref{F decomposition} and \eqref{Define Te}, respectively. 
There is a constant $Q=Q(s,c_0,\g)>0$ such that for all $\e>0$,
\begin{equation*}
\left|\left(T_\e \left[(u-\g)u_x\right], T_\e u\right)_{H^s}\right|
+\left|\left(T_\e F(u), T_\e u\right)_{H^s}\right|\leq Q\left(1+\|u\|_{W^{1,\infty}}\right)\|u\|^2_{H^s}.
\end{equation*}
\end{Lemma}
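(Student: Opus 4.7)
The proof splits into an estimate for the nonlocal term and one for the transport term. The $F$-contribution is the easy part: by the contractivity \eqref{mollifier property 5} and Cauchy--Schwarz, $|(T_\e F(u), T_\e u)_{H^s}| \le \|F(u)\|_{H^s}\|u\|_{H^s}$, and Lemma \ref{F(u) lemma} bounds $\|F(u)\|_{H^s}$ by a multiple of $(\|u\|_{L^\infty}+\|u_x\|_{L^\infty}+|c_0+\g|)\|u\|_{H^s}$, which after absorbing the $|c_0+\g|$-part into the final constant yields $(1+\|u\|_{W^{1,\infty}})\|u\|_{H^s}^2$ up to a constant depending on $s,c_0,\g$.

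For the transport term I split $(u-\g)u_x = uu_x - \g u_x$. Since $D^s$, $T_\e$ and $\partial_x$ are all Fourier multipliers and hence pairwise commute, the $\g u_x$-piece equals $-\g(\partial_x D^s T_\e u, D^s T_\e u)_{L^2}$ and vanishes by integration by parts on $\T$. For $uu_x$ I invoke \eqref{mollifier property 3} and the decomposition
\begin{equation*}
D^s T_\e(uu_x) = u\,\partial_x(D^s T_\e u) + [D^s T_\e, u]u_x.
\end{equation*}
Pairing the first summand with $D^s T_\e u$ in $L^2$ and integrating by parts gives $-\tfrac12(u_x\, D^s T_\e u, D^s T_\e u)_{L^2}$, which is controlled by $\tfrac12\|u_x\|_{L^\infty}\|u\|_{H^s}^2$ using \eqref{mollifier property 5}.

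The main technical obstacle is the $\e$-uniform commutator estimate $\|[D^s T_\e, u]u_x\|_{L^2} \lesssim \|u\|_{W^{1,\infty}}\|u\|_{H^s}$. I would establish it via the operator identity $[D^s T_\e, u] = T_\e[D^s, u] + [T_\e, u]D^s$, checked directly by expanding both sides and using $[D^s, T_\e]=0$. The first summand is handled by the $L^2$-contractivity of $T_\e$ (the $s=0$ case of \eqref{mollifier property 5}) together with Lemma \ref{Kato-Ponce commutator estimate}:
\begin{equation*}
\|T_\e[D^s, u]u_x\|_{L^2} \le \|[D^s, u]u_x\|_{L^2} \lesssim \|D^s u\|_{L^2}\|u_x\|_{L^\infty} + \|u_x\|_{L^\infty}\|D^{s-1}u_x\|_{L^2} \lesssim \|u\|_{W^{1,\infty}}\|u\|_{H^s},
\end{equation*}
where I used $\|D^{s-1}u_x\|_{L^2}\le \|u\|_{H^s}$. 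For the second summand, the identity $D^s u_x = \partial_x D^s u$ lets me rewrite $[T_\e, u]D^s u_x = [T_\e, u\partial_x]D^s u$, so that Lemma \ref{Te commutator} applied with $g=u$ and $f = D^s u$ delivers the same bound $C\|u\|_{W^{1,\infty}}\|u\|_{H^s}$. Combining the two contributions and collecting constants yields the claim with $Q=Q(s,c_0,\g)$.
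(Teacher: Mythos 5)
Your argument is correct and is essentially the paper's own proof: the same splitting off of the $\g u_x$ term, the same three-way commutator decomposition of $D^sT_\e(uu_x)$ handled respectively by Lemma \ref{Kato-Ponce commutator estimate}, Lemma \ref{Te commutator} (after rewriting $[T_\e,u]D^su_x=[T_\e,u\partial_x]D^su$), and integration by parts, together with \eqref{mollifier property 5}, and the $F$-term estimated directly via Lemma \ref{F(u) lemma}. No gaps to report.
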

\begin{proof}
We first notice that
\begin{align*}
\left(T_\e \left[(u-\g)u_x\right], T_\e u\right)_{H^s}=\int_{\T}D^s T_\e \left[(u-\g)u_x\right] \cdot D^s T_\e u\ {\rm d}x
=\int_{\T}D^s T_\e \left[uu_x\right] \cdot D^s T_\e u\ {\rm d}x.
\end{align*}
Due to  \eqref{mollifier property 3} and \eqref{mollifier property 4}, we commute the operator  to derive
\begin{align*}
&\left(D^sT_\e 
\left[uu_x\right],D^sT_\e u\right)_{L^2}\notag\\
=&\left(\left[D^s,u\right]u_x,D^sT^2_\e u\right)_{L^2}+
\left([T_\e,u]D^su_x, D^sT_\e u\right)_{L^2}
+\left(uD^sT_\e u_x, D^sT_\e u\right)_{L^2}.
\end{align*}
Then it follows from Lemmas   \ref{Te commutator} and \ref{Kato-Ponce commutator estimate}, integration by parts, \eqref{mollifier property 5} and $H^s\hookrightarrow W^{1,\infty}$ that
\begin{equation*}
\left|\left(T_\e \left[(u-\g)u_x\right], T_\e u\right)_{H^s}\right|
\lesssim
\|u\|_{W^{1,\infty}}\|u\|^2_{H^s}.
\end{equation*}
Using Lemma \ref{F(u) lemma} and \eqref{mollifier property 5} directly, we have 
\begin{equation*}
\left|\left(T_\e F(u), T_\e u\right)_{H^s}\right|
\lesssim
\left(\|u\|_{W^{1,\infty}}+(c_0+\g)\right)\|u\|^2_{H^s}.
\end{equation*}
Combining the above two inequalities gives rise to the desired estimate of the lemma.
\end{proof}

The following lemmata have been established for the real-line case in \cite{Constantin-Escher-1998-Acta} and \cite{Constantin-2000-JNS}, respectively. They hold likewise for   $x\in\T$, using the periodicity on $\T$.% as in the proof
%of Theorem 2.1 in \cite{Constantin-Escher-1998-Acta}  (cf.~also) we  get
\begin{Lemma}\label{Constantin-Escher}
	Let $T >0$ and $u\in C^1([0,T); H^2(\T))$. Then given any $t\in[0,T)$, there is at least one point $z(t)$ with
	\begin{equation*}
	M(t):=\min_{x\in\T}[u_x(t,x)]=u_x(t,z(t)).
	\end{equation*}
	Moreover, the function $M=M(t)$ is almost everywhere differentiable on $(0,T)$ with
	\begin{equation*}
	\frac{{\rm d}}{{\rm d}t}M(t)=u_{tx}(t,z(t))\ \ {\rm a.e.\ on}\ (0,T).
	\end{equation*}
\end{Lemma}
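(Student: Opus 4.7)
The existence of a minimizer $z(t)$ is immediate: by the Sobolev embedding $H^2(\T)\hookrightarrow C^1(\T)$, the function $u_x(t,\cdot)$ is continuous on the compact set $\T$ and therefore attains its infimum. No uniqueness is asserted, and the identity for $M'$ will turn out to hold for every such minimizer. The hypothesis $u\in C^1([0,T);H^2(\T))$ together with the same embedding also ensures that $u_{tx}$ is jointly continuous on $[0,T)\times\T$, and that for each fixed $z\in\T$ the scalar function $t\mapsto u_x(t,z)$ is $C^1$ with
$$u_x(s,z)-u_x(t,z)=\int_t^s u_{tx}(\tau,z)\,{\rm d}\tau,\qquad t,s\in[0,T).$$

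My plan for the differentiability assertion is the classical envelope argument, carried out in two steps. First I would show that $M$ is locally Lipschitz on $[0,T)$, whence Rademacher's theorem (equivalently, absolute continuity of scalar Lipschitz functions) yields a.e.\ differentiability. The key ingredient is the two-sided squeeze
$$u_x(s,z(s))-u_x(t,z(s))\ \le\ M(s)-M(t)\ \le\ u_x(s,z(t))-u_x(t,z(t)),$$
obtained directly from the defining property of the minimizers. Combined with the integral representation above, both extremes are bounded in absolute value by $|s-t|\sup_{[a,b]\times\T}|u_{tx}|$ for any compact $[a,b]\subset[0,T)$ containing $t$ and $s$. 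This supremum is finite by joint continuity, yielding the required local Lipschitz bound.

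At an arbitrary differentiability point $t\in(0,T)$ of $M$, I would fix any minimizer $z(t)$ and exploit the upper half of the squeeze. Dividing by $s-t>0$, passing to $s\to t^+$, and using joint continuity of $u_{tx}$ at $(t,z(t))$ gives $M'(t)\le u_{tx}(t,z(t))$. Applying the same upper estimate with $s<t$ and dividing by the negative quantity $s-t$ reverses the inequality and, in the limit $s\to t^-$, produces $M'(t)\ge u_{tx}(t,z(t))$. The matching one-sided bounds force equality. The only mild subtlety is that $z(t)$ need not be unique; this is harmless because each half of the squeeze uses only a single minimizer, so the resulting formula is valid for \emph{any} choice of $z(t)$. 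The transition from the real-line version in \cite{Constantin-Escher-1998-Acta} to $\T$ is in fact a simplification, since compactness of the torus guarantees the minimum is attained without any decay assumption.
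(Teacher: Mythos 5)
Your proposal is correct: the paper does not prove this lemma itself but defers to \cite{Constantin-Escher-1998-Acta,Constantin-2000-JNS}, and your envelope argument (the two-sided squeeze $u_x(s,z(s))-u_x(t,z(s))\le M(s)-M(t)\le u_x(s,z(t))-u_x(t,z(t))$ giving a local Lipschitz bound, hence a.e.\ differentiability, followed by the matching one-sided difference quotients at a point of differentiability) is exactly the standard argument of those references, with compactness of $\T$ replacing the decay assumptions needed on the real line. No gaps; the observation that the formula holds for any choice of minimizer $z(t)$ is also accurate and suffices for the statement.
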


We conclude this preparatory section with some results from \cite{Rohde-Tang-2020-JDDE}, which are needed to establish the 
theorems on global existence.

\begin{Lemma}\label{eta Lemma}
	Let Assumption \ref{Assumption-3} hold true and assume that $a(t)\in C([0,\infty))$ is a bounded function. For 
	\begin{equation*}
	X={\rm e}^{\int_0^tb(t') \, {\rm d} W_{t'}+\int_0^ta(t')-\frac{b^2(t')}{2} {\rm d}t'}
	\end{equation*}
	 the following properties hold true.
	
	\begin{enumerate}[label={ {\rm (\roman*)}}]
		
		\item\label{eta->0}	Let $\phi(t):=\int_0^tb^2(t') \,{\rm d}t'$  with inverse $\phi^{-1}(t)$.
		If 	\begin{equation*}
		\limsup_{t \to \infty} \frac{1}{\sqrt{2 t \log \log t}} \biggl(\int_0^{\phi^{-1}(t)} a(t') \, {\rm d}t' 
		- \frac{t}{2}\biggr) < -1 ,
		\end{equation*}
		then
		%		\begin{equation*}
		%		\limsup_{t \to \infty} \frac{1}{\sqrt{2 t \log \log t}} \biggl(\int_0^{\phi^{-1}(t)} \delta(t')  {\rm d}t' - \frac{t}{2}\biggr) < -1 ,
		%		\end{equation*}
		%		then
		\begin{equation*}
		\lim_{t \to \infty}X(t)=0 \ \ \p-a.s.
		\end{equation*}
		If
		\begin{equation*}
		\liminf_{t \to \infty} \frac{1}{\sqrt{2 t \log \log t}} \biggl(\int_0^{\phi^{-1}(t)} a(t') \, {\rm d}t' - \frac{t}{2}\biggr)>1 ,
		\end{equation*}
		then
		\begin{equation*}
		\lim_{t \to \infty}X(t)=\infty \ \ \p-a.s.
		\end{equation*}
		%		If
		%		\begin{equation}
		%		\limsup_{t \to \infty} \frac{1}{\sqrt{2 t \log \log t}} \biggl(\int_0^{\phi^{-1}(t)} a(t')  {\rm d}t' - \frac{t}{2}\biggr)=-1 ,\label{theta beta condition III}
		%		\end{equation}
		%			then $X(t)$ has no limit as $t\to\infty$ with probability 1.

		\item\label{exit time eta} Let  $a(t)=\lambda b^2(t)$ with $\lambda<\frac12$ and $\tau_{R}=\inf \{t \geq 0 : X(t)>R\}$ with $R>1$, then
		\begin{equation*}
		\mathbb{P}\left(\tau_{R}=\infty\right) \geq 1-\left(\frac{1}{R}\right)^{1-2\lambda}.
		\end{equation*}
	\end{enumerate}
\end{Lemma}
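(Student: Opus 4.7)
The plan is to split $\log X$ into a continuous martingale part $M(t)=\int_0^tb(t')\,{\rm d}W_{t'}$ with deterministic quadratic variation $\phi(t)=\int_0^tb^2(t')\,{\rm d}t'$ and a deterministic drift, and then handle \ref{eta->0} by the Law of the Iterated Logarithm combined with a Dambis--Dubins--Schwarz time change, and \ref{exit time eta} by recognising a positive exponential martingale and applying Doob's maximal inequality. Assumption \ref{Assumption-3} yields $b_*t\le\phi(t)\le b^*t$, so $\phi:[0,\infty)\to[0,\infty)$ is a continuous strictly increasing bijection with $\phi(t)\to\infty$, and $\phi^{-1}$ is well defined.

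For part \ref{eta->0}, I would invoke Dambis--Dubins--Schwarz to produce a standard Brownian motion $\widetilde W$ with $M(t)=\widetilde W(\phi(t))$ a.s.\ for all $t\ge0$; the bijectivity of the deterministic time change $\phi$ onto $[0,\infty)$ removes the usual issue of extending $\widetilde W$. Hence
\begin{equation*}
\log X(t)=\widetilde W(\phi(t))+\int_0^ta(t')\,{\rm d}t'-\tfrac{\phi(t)}{2}.
\end{equation*}
Dividing by $\sqrt{2\phi(t)\log\log\phi(t)}$ and setting $s=\phi(t)$ in the deterministic summand rewrites the hypothesis as $\limsup_{t\to\infty}\frac{1}{\sqrt{2\phi(t)\log\log\phi(t)}}\bigl(\int_0^ta(t')\,{\rm d}t'-\phi(t)/2\bigr)<-1$, while the classical LIL for $\widetilde W$ at time $\phi(t)\to\infty$ gives $\limsup_{t\to\infty}\widetilde W(\phi(t))/\sqrt{2\phi(t)\log\log\phi(t)}=1$ a.s. Subadditivity of $\limsup$ then yields a strictly negative upper bound for $\log X(t)/\sqrt{2\phi(t)\log\log\phi(t)}$, forcing $\log X(t)\to-\infty$ and hence $X(t)\to 0$ a.s. The $X(t)\to\infty$ case follows symmetrically from the $\liminf=-1$ version of LIL together with subadditivity of $\liminf$ in the opposite direction.

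For part \ref{exit time eta}, set $\alpha:=1-2\lambda>0$; the identity $\alpha(\lambda-\tfrac12)=-\alpha^2/2$ gives
\begin{equation*}
X(t)^{\alpha}=\exp\Bigl(\alpha\int_0^tb(t')\,{\rm d}W_{t'}-\tfrac{\alpha^2}{2}\phi(t)\Bigr).
\end{equation*}
Because $b$ is deterministic and bounded, $M(t)\sim\mathcal N(0,\phi(t))$, so a direct Gaussian computation gives $\E[X(t)^{\alpha}]=1$ for every $t\ge 0$; thus $X^{\alpha}$ is a true nonnegative continuous martingale with $X(0)^{\alpha}=1$. Continuity of $X$ yields $\{\tau_R<\infty\}=\{\sup_{t\ge0}X(t)^{\alpha}\ge R^{\alpha}\}$, and Doob's maximal inequality gives
\begin{equation*}
\p(\tau_R<\infty)\le \E[X(0)^{\alpha}]\,R^{-\alpha}=R^{-(1-2\lambda)},
\end{equation*}
whose complement is the asserted lower bound on $\p(\tau_R=\infty)$.

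The main bookkeeping obstacle is in part \ref{eta->0}: one has to verify carefully that the hypothesis written with normalisation $\sqrt{2t\log\log t}$ and argument $\phi^{-1}(t)$ transforms, under the substitution $s=\phi(t)$, into precisely the normalisation $\sqrt{2\phi(t)\log\log\phi(t)}$ that appears when LIL is applied to $\widetilde W(\phi(t))$; once this matching is in place, the remainder reduces to standard facts about one-dimensional Brownian motion and positive continuous martingales.
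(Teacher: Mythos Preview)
Your proof is correct, and there is nothing in the paper to compare it against: the paper does not prove this lemma but simply imports it from \cite{Rohde-Tang-2020-JDDE}. Your approach---Dambis--Dubins--Schwarz plus the Law of the Iterated Logarithm for part \ref{eta->0}, and the exponential martingale $X^{1-2\lambda}$ together with Doob's maximal inequality for part \ref{exit time eta}---is the natural one and almost certainly coincides with the argument in the cited reference. One could even bypass DDS here: since $b$ is deterministic, $M(\phi^{-1}(\cdot))$ is directly seen to be a continuous centred Gaussian process with independent increments and variance $s$, hence a standard Brownian motion.

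Two cosmetic points worth tightening. First, in Doob's inequality the right-hand side should read $\E[X(T)^{\alpha}]\,R^{-\alpha}$ rather than $\E[X(0)^{\alpha}]\,R^{-\alpha}$; the two agree because $X^{\alpha}$ is a martingale, but the inequality itself uses the terminal expectation, and one then lets $T\to\infty$ through a monotone limit. Second, the set identity $\{\tau_R<\infty\}=\{\sup_{t\ge0}X(t)^{\alpha}\ge R^{\alpha}\}$ is not quite an equality (the supremum could equal $R^{\alpha}$ without being exceeded); what you actually need and use is the inclusion $\{\tau_R<\infty\}=\{\sup_t X(t)>R\}\subset\{\sup_t X(t)\ge R\}$, which suffices for the bound.
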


\section{Proof of Theorem \ref{Local pathwise solution}}
\label{Local well-posedness}
We consider the initial value problem \eqref{periodic Cauchy problem}.
The proof of existence and uniqueness of pathwise solutions can be carried out by standard procedures as used in many works, see  \cite{GlattHoltz-Vicol-2014-AP,GlattHoltz-Ziane-2009-ADE,Tang-2018-SIMA,Tang-2020-Arxiv,Rohde-Tang-2020-JDDE,Breit-Feireisl-Hofmanova-2018-Book,Breit-Feireisl-Hofmanova-2018-CPDE} for more details. Therefore we only give a sketch.

\begin{enumerate}
	\item Firstly, one constructs a suitable approximation scheme  using a cut-off function to control the  $W^{1,\infty}$-norm 
	(arising from $(u-\gamma)u_x$, \eqref{assumption 1 for h}  and Lemma \ref{F(u) lemma}). With such cut-off, both the drift and diffusion coefficients in the problem become locally Lipschitz continuous and  grow linearly in $u$ (cf. \cite{Tang-2018-SIMA,Tang-2020-Arxiv,Rohde-Tang-2020-JDDE}). Thus the approximation solutions exist globally.  Besides,  such cut-off enables us to close   the {\it a priori} $L^2(\Omega;H^{s})$ estimate by splitting $\E(\|u\|_{H^s}^2\|u\|_{W^{1,\infty}})$. Therefore by using Lemma \ref{Kato-Ponce commutator estimate} and \eqref{assumption 1 for h}, uniform estimates for the approximation solutions can be established. We refer the readers to  \cite{Tang-2018-SIMA,Tang-2020-Arxiv} for some closely related  models;
	
	\item  Secondly, by the uniform estimates, one obtains the tightness of the distributions of the approximation solution in $\Pm{}\left(C([0,T];H^{s-1})\right)$, where $\Pm{}\left(C([0,T];H^{s-1})\right)$ is the collection of Borel probability measures on $C([0,T];H^{s-1})$. We refer to \cite{Tang-2018-SIMA,Tang-2020-Arxiv,Ren-Tang-Wang-2020-Arxiv} for example.
	Applying the probabilistic compactness arguments, i.e., the Prokhorov theorem and the Skorokhod theorem, and using some technical convergence results as in \cite{Bensoussan-1995-AAM,Debussche-Glatt-Temam-2011-PhyD,Breit-Feireisl-Hofmanova-2018-Book,Breit-Feireisl-Hofmanova-2018-CPDE}, one verifies the existence of a martingale solution in $H^s$ with $s>3$. In this step $s>3$ is an intermediate requirement  because the convergence is in $H^{s-1}$ and we need to control the $W^{1,\infty}$-norm by the embedding $H^{s-1}\hookrightarrow W^{1,\infty}$;

	\item Thirdly, by Lemma \ref{F(u) lemma} and \eqref{assumption 2 for h}, one can show that pathwise uniqueness holds. Then 
	the Gy\"ongy--Krylov characterization of the convergence in probability (see  \cite{Gyongy-Krylov-1996-PTRF}) can be applied to show the existence and uniqueness of a pathwise solution in $H^s$ with $s>3$, cf. \cite{GlattHoltz-Vicol-2014-AP,Breit-Feireisl-Hofmanova-2018-Book,Tang-2020-Arxiv};
	
	\item Finally,  mollifying initial data, analyzing the convergence and employing the  argument as in \cite{GlattHoltz-Ziane-2009-ADE,GlattHoltz-Vicol-2014-AP,Tang-2018-SIMA,Tang-2020-Arxiv} lead to a local pathwise solution $(u,\tau)$ to \eqref{SDGH-noise} with 
	  $u(\cdot\wedge\tau)\in L^2\left(\Omega; C\left([0,\infty);H^s\right)\right)$  for $u_0\in L^{2}(\Omega;H^s)$ with $s>3/2$. 
	
\end{enumerate}

To finish the proof of Theorem \ref{Local pathwise solution}  we only need to verify  the blow-up criterion \eqref{blow-up criterion common}.
 Motivated by  \cite{Crisan-Flandoli-Holm-2018-JNS,Rohde-Tang-2020-JDDE}, we first consider, in next lemma, the relationship between the explosion time of $\|u(t)\|_{H^s}$ and the explosion time of $\|u(t)\|_{W^{1,\infty}}$ for \eqref{periodic Cauchy problem}. The results of the lemma will not only 
  immediately imply   the blow-up criterion \eqref{blow-up criterion common}  but also be used  in the next sections.

%\subsection{blow-up criterion}\label{Sect:blow-up criterion}

\begin{Lemma}\label{blow-up criterion lemma}
	Let $(u,\tau^*)$ be the unique maximal solution to \eqref{periodic Cauchy problem}.  Then the real-valued stochastic process $\|u\|_{W^{1,\infty}}$ is also $\mathcal{F}_t$--adapted. Besides, for any $m,n\in\Z^{+}$, define
	\begin{align*}
	\tau_{1,m}=\inf\left\{t\geq0: \|u(t)\|_{H^s}\geq m\right\},\ \ \
	\tau_{2,n}=\inf\left\{t\geq0: \|u(t)\|_{W^{1,\infty}}\geq n\right\}.
	\end{align*}
	For $\displaystyle\tau_1=\lim_{m\rightarrow\infty}\tau_{1,m}$ and $\displaystyle\tau_2=\lim_{n\rightarrow\infty}\tau_{2,n}$,  we have then
	$$
	\tau_{1}=\tau_{2} \ \ \p-a.s.
	$$
\end{Lemma}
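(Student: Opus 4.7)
The plan is to prove the two inclusions separately, with the easy direction and the adaptedness being essentially immediate. Since $s>3/2$, the Sobolev embedding $H^s\hookrightarrow W^{1,\infty}$ makes $v\mapsto \|v\|_{W^{1,\infty}}$ continuous from $H^s$ to $\R$; composing with the $\F_t$-adapted $H^s$-valued process $u$ yields adaptedness of $\|u(\cdot)\|_{W^{1,\infty}}$, so each $\tau_{2,n}$ is indeed an $\F_t$-stopping time. Writing $K$ for the embedding constant, the pointwise bound $\|u(t)\|_{W^{1,\infty}}\le K\|u(t)\|_{H^s}$ forces $\tau_{1,m}\le \tau_{2,\lceil Km\rceil+1}$, and letting $m\to\infty$ produces $\tau_1\le\tau_2$ $\p$-a.s.

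For the harder direction $\tau_2\le\tau_1$, the idea is to fix $m,n\in\Z^+$ and $T>0$, set $\eta(t):=t\wedge\tau_{1,m}\wedge\tau_{2,n}$, and apply It\^o's formula to the regularized functional $\|T_\e u(\eta(t))\|_{H^s}^2$, where $T_\e$ is the operator from \eqref{Define Te}. Using \eqref{mollifier property 3}--\eqref{mollifier property 5}, Lemma \ref{Te commutator}, the key estimate of Lemma \ref{uux+F Hs inner product}, and the growth bound \eqref{assumption 1 for h}, the resulting drift is dominated by $2Q(1+\|u\|_{W^{1,\infty}})\|u\|_{H^s}^2$ and the It\^o correction $\|T_\e h(t,u)\|_{L_2(U,H^s)}^2$ is dominated by $f(\|u\|_{W^{1,\infty}})^2\|u\|_{H^s}^2$. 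On $[0,\eta(t)]$ one has $\|u\|_{W^{1,\infty}}\le n$, so both prefactors collapse into a constant $C_n$ depending only on $n$.

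Passing $\e\to 0$ (via \eqref{mollifier property 5}, dominated convergence in $H^s$, and the isometry of the stochastic integral applied to the differences), taking expectations, and using the Burkholder-Davis-Gundy inequality \eqref{BDG G} to absorb the martingale supremum into the left-hand side, one obtains
\begin{equation*}
\E\sup_{t'\le t}\|u(\eta(t'))\|_{H^s}^2 \;\le\; \E\|u_0\|_{H^s}^2 + C_n\int_0^t \E\sup_{t''\le t'}\|u(\eta(t''))\|_{H^s}^2\,{\rm d}t'.
\end{equation*}
Gronwall then yields $\E\sup_{t'\in[0,T\wedge\tau_{1,m}\wedge\tau_{2,n}]}\|u(t')\|_{H^s}^2\le C(n,T)$, a bound independent of $m$. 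Since on $\{\tau_{1,m}<T\wedge\tau_{2,n}\}$ the supremum is at least $m$ by definition, Chebyshev gives $\p(\tau_{1,m}<T\wedge\tau_{2,n})\le C(n,T)/m^2\to 0$ as $m\to\infty$, so $\tau_1\ge T\wedge\tau_{2,n}$ $\p$-a.s.; sending $n,T\to\infty$ completes $\tau_1\ge\tau_2$.

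The main obstacle will be the It\^o step itself: the drift $(u-\g)\partial_x u$ lies only in $H^{s-1}$, so one cannot apply It\^o directly to $\|u\|_{H^s}^2$. The regularizer $T_\e$ provides a smooth substitute, and Lemmata \ref{Te commutator} and \ref{uux+F Hs inner product} are tailored precisely so that the commutator errors arising in the energy estimate can be closed \emph{uniformly} in $\e$, enabling the limit $\e\to 0$ to be taken at the end.
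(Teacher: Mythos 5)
Your proposal is correct and follows essentially the same route as the paper: mollify with $T_\e$, apply It\^o's formula to $\|T_\e u\|_{H^s}^2$, control the drift with Lemma \ref{uux+F Hs inner product} and the noise with \eqref{assumption 1 for h} on the interval stopped at $\tau_{2,n}$, use the Burkholder--Davis--Gundy inequality \eqref{BDG G}, pass $\e\to0$, and close with Gr\"onwall. The only (harmless) variation is at the end: you additionally stop at $\tau_{1,m}$ and conclude via Chebyshev as $m\to\infty$, whereas the paper deduces $\p\{\sup_{t\in[0,\tau_{2,n}\wedge k]}\|u(t)\|_{H^s}<\infty\}=1$ and uses the set inclusion into $\{\tau_{2,n}\wedge k\le\tau_1\}$; both yield $\tau_2\le\tau_1$ almost surely.
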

\begin{proof}
		To begin with, since $u(\cdot\wedge \tau)\in C([0,\infty);H^s)$ almost surely, we see that for any $t\in[0,\tau]$,
	$$[u(t)]^{-1}(Y)=[u(t)]^{-1}(H^s\bigcap Y),\ \forall\ Y\in\B(W^{1,\infty}).$$ Therefore $u(t)$, as a $W^{1,\infty}$-valued process, is also $\mathcal{F}_t$-adapted. Moreover, the embedding $H^s\hookrightarrow W^{1,\infty}$ for $s>3/2$ means that there is a
	$K=K(s)>0$ such that  $\|\cdot\|_{W^{1,\infty}}<K\|\cdot\|_{H^s}$.
	Then for  every $m\in\N$,
	\begin{align*}
	\sup_{t\in[0,\tau_{1,m}]}\|u(t)\|_{W^{1,\infty}}\leq K\sup_{t\in[0,\tau_{1,m}]}\|u(t)\|_{H^s}
	\leq ([K]+1)m,
\end{align*}
where $[K]$ means the integer part of $K$.
Consequently, 
	$\tau_{1,m}\leq\tau_{2,([K]+1)m}\leq \tau_2$ almost surely,
	which means that
	$\tau_{1}\leq \tau_2$ $\p-a.s.$
	Now we only need to prove the contrary inequality. Let $n,k\in\Z^{+}$, one has
	\begin{align*}
	\left\{\sup_{t\in[0,\tau_{2,n}\wedge k]}\|u(t)\|_{H^s}<\infty\right\}
	=\bigcup_{m\in\Z^{+}}\left\{\sup_{t\in[0,\tau_{2,n}\wedge k]}\|u(t)\|_{H^s}<m\right\}
	\subset\bigcup_{m\in\Z^{+}}\left\{\tau_{2,n}\wedge k\leq\tau_{1,m}\right\}.
	\end{align*}
	Notice that $$\bigcup_{m\in\Z^{+}}\left\{\tau_{2,n}\wedge k\leq\tau_{1,m}\right\}\subset\left\{\tau_{2,n}\wedge k\leq\tau_{1}\right\}.$$
	We see that 
	\begin{align}
	\p\left\{\tau_2\leq\tau_1\right\}
	=\p\left\{\bigcap_{n\in\Z^{+}}\left\{\tau_{2,n}\leq \tau_{1}\right\}\right\}
	=\p\left\{\bigcap_{n,k\in\Z^{+}}\left\{\tau_{2,n}\wedge k\leq \tau_{1}\right\}\right\}=1,\label{tau2<tau1}
	\end{align}
	provided that $\p\left\{\tau_{2,n}\wedge k\leq\tau_{1}\right\}=1$  for all $n,k\in\Z^{+}$.
	To this end, we only need to prove
	\begin{align}
	\p\left\{\sup_{t\in[0,\tau_{2,n}\wedge k]}\|u(t)\|_{H^s}<\infty\right\}=1,\ \
	\forall\ n,k\in\Z^{+}.\label{tau2<tau1 condition}
	\end{align}
	Consider first  $\E\sup_{t\in[0,\tau_{2,n}\wedge k]}\|u(t)\|_{H^s}^2$. We cannot estimate this expectation using the  It\^{o} formula
%, and this because  we only have $u\in H^s$ and hence $\left(u-\g\right)\partial_xu\in H^{s-1}$.  As a result,   the well known It\^{o} formula can not be used 
directly.  Indeed, the It\^{o} formula in a Hilbert space  (\cite[Theorem 4.32]{Prato-Zabczyk-2014-Cambridge} or \cite[Theorem 2.10]{Gawarecki-Mandrekar-2010-Springer})  requires $ \left(\left(u-\g\right)u_x, u\right)_{H^s}$ to be well-defined and the It\^{o} formula  under a  Gelfand triplet (\cite[Theorem I.3.1]{Krylov-Rozovskiui-1979-chapter} or \cite[Theorem 4.2.5]{Prevot-Rockner-2007-book}) requires the dual product ${}_{H^{s-1}}\langle \left(u-\g\right)u_x, u\rangle_{H^{s+1}}$ to be well-defined. In our case we  only have $u\in H^s$ and  $\left(u-\g\right)u_x \in H^{s-1}$ such that neither requirement is fulfilled. 
 Therefore we utilize the  mollifier operator $T_\e$ defined in \eqref{Define Te}. We first apply $T_\e$ to \eqref{periodic Cauchy problem}, and then
use the It\^{o} formula for $\|T_\e u\|_{H^s}^2=\|D^sT_\e u\|_{L^2}^2$ to deduce that for any $n,k>1$ and $t\in[0,\tau_{2,n}\wedge k]$, 
	\begin{align*}
	\|T_\e u(t)\|^2_{H^s}-\|T_\e u(0)\|^2_{H^s}
	=\,\,&2\sum_{k=1}^{\infty}\int_0^{t}
	\left(D^sT_\e h(t',u)e_k,D^sT_\e u\right)_{L^2}{\rm d}W_k\nonumber\\
	&-2\int_0^{t}
	\left(D^s T_\e 
	\left[(u-\g)\partial_xu\right],D^s T_\e u\right)_{L^2}\,{\rm d}t'\nonumber\\
	&-2\int_0^{t}
	\left(D^s T_\e F(u),D^s T_\e u\right)_{L^2} \,{\rm d}t'\nonumber\\
	&+\int_0^{t}
	\sum_{k=1}^\infty\|D^s T_\e h(t',u)e_k\|_{L^2}^2\, {\rm d}t'\nonumber\\
	=:\,\,&\sum_{k=1}^{\infty}\int_0^{t} L_{1,k} \,{\rm d}W_k+\sum_{i=2}^4\int_0^tL_i \, {\rm d}t'.
	\end{align*}
	On account of the Burkholder-Davis-Gundy inequality \eqref{BDG G}, for the expectation of the $H^s$-norm of  $T_\e u$, we arrive  at 
	\begin{align*}
	\E\sup_{t\in[0,\tau_{2,n}\wedge k]}\| T_\e u(t)\|^2_{H^s}\leq\E\| T_\e u_0\|^2_{H^s}
	+C\E\left(\int_0^{\tau_{2,n}\wedge k}\left|\sum_{k=1}^{\infty} L_{1,k}\right|^2{\rm d}t\right)^{\frac12}
	+\sum_{i=2}^4\E\int_0^{\tau_{2,n}\wedge k}|L_i|\,{\rm d}t.
	\end{align*}
	We can infer from \eqref{assumption 1 for h}, \eqref{mollifier property 5}, the stochastic Fubini theorem \cite{Prato-Zabczyk-2014-Cambridge} and Assumption \ref{Assumption-1} that
	\begin{align*}
	\E\left(\int_0^{\tau_{2,n}\wedge k}\left|\sum_{k=1}^{\infty} L_{1,k}\right|^2{\rm d}t\right)^{\frac12}
	\leq& \frac12\E\sup_{t\in[0,\tau_{2,n}\wedge k]}\| T_\e u\|_{H^s}^2
	+Cf^2(n)\int_0^{k}\left(1+\E\|u\|_{H^s}^2\right)\,{\rm d}t.
	\end{align*}
	For $L_2$ and $L_3$, we use Lemma \ref{uux+F Hs inner product} to find
	\begin{align*}
	\E\int_0^{\tau_{2,n}\wedge k}|L_2|+|L_3|\ {\rm d}t\leq Cn\int_0^{k}\left(1+\E\|u\|_{H^s}^2\right){\rm d}t.
	\end{align*}
	Similarly, it follows from the assumption \eqref{assumption 1 for h} that
	\begin{align*}
	\E\int_0^{\tau_{2,n}\wedge k}|L_4|\ {\rm d}t\leq Cf^2(n)\int_0^{k}\left(1+\E\|u\|_{H^s}^2\right){\rm d}t.
	\end{align*}
	If  we combine the above estimates   and use \eqref{mollifier property 5}, we are led for some constant $C=C_n>0$ depending on $n$ to 
	\begin{align*}
	\E\sup_{t\in[0,\tau_{2,n}\wedge k]}\|T_\e u(t)\|^2_{H^s}
	%\leq C\E\|u_0\|^2_{H^s}+ C \int_0^{k}\left(1+\E\|u\|_{H^s}^2\right)\, {\rm d}t
	\leq 2\E\|u_0\|^2_{H^s}+ C_n\int_0^{k}\left(1+\E\sup_{t'\in[0,t\wedge\tau_{2,n}]}\|u(t)\|_{H^s}^2\right) \, {\rm d}t.
	\end{align*}
%	where $C_n$ depends on $n$ through $n+f^2(n)$. 
 Since the right hand side of the last  estimate does not depend on $\e$, and
	$T_\e u$ tends to $u$ in $C\left([0,T],H^{s}\right)$ for any $T>0$ almost surely as $\e\rightarrow0$, one can send $\e\rightarrow0$ to obtain
		\begin{align*}
	\E\sup_{t\in[0,\tau_{2,n}\wedge k]}\|u(t)\|^2_{H^s}
	%\leq C\E\|u_0\|^2_{H^s}+ C \int_0^{k}\left(1+\E\|u\|_{H^s}^2\right){\rm d}t
	\leq 2\E\|u_0\|^2_{H^s}+ C_n\int_0^{k}\left(1+\E\sup_{t'\in[0,t\wedge\tau_{2,n}]}\|u(t)\|_{H^s}^2\right){\rm d}t.
	\end{align*} 
	Then Gr\"{o}nwall's inequality shows that for each $n,k\in\Z^{+}$, there is a constant $C=C(n,k,u_0)>0$ such that $$\E\sup_{t\in[0,\tau_{2,n}\wedge k]}\|u(t)\|^2_{H^s}<C(n,k,u_0),$$ which gives \eqref{tau2<tau1 condition}.
\end{proof}

We finish the section with the proof of the blow-up criterion in Theorem \eqref{Local pathwise solution}.

\begin{proof}[Proof of \eqref{blow-up criterion common}]
	Let $\tau_{1,m}$, $\tau_{2,n}$, $\tau_1$ and $\tau_2$ be given in Lemma \ref{blow-up criterion lemma}.
If $u$ is the unique pathwise solution with maximal existence time $\tau^*$, for fixed $m,n>0$, even if $\p\{\tau_{1,m}=0\}$ or $\p\{\tau_{2,n}=0\}$ is larger than $0$, for a.e. $\omega\in\Omega$, there is $m>0$ or $n>0$ such that $\tau_{1,m},\tau_{2,n}>0$. By continuity of $\|u(t)\|_{H^s}$ and the uniqueness of $u$, it is easy to check that $\tau_1=\tau_2=\tau^*$. Consequently, we obtain the desired blow-up criterion.
\end{proof} 	

\section{Proof of Theorem \ref{Non breaking}: Strong nonlinear noise}\label{Noise VS blow-up}

%A straightforward observation

%We first observe the following  inequality

To begin with, we note the following algebraic inequality.

\begin{Lemma}\label{log lemma}
	Let $c,M>0$.
	Assume   $$ \text{either }\eta>1,\ a,b>0  \text{ or }
	\eta=1,\ b>a>0.$$
There is  a $C>0$ such that for all $0\leq x\leq M y<\infty$,
	\begin{equation*}
	\frac{a (1+x)y^2+b(1+x)^{\eta}y^2}{1+y^2}-\frac{2b (1+x)^{\eta}y^4}{(1+y^2)^2}
	+\frac{c (1+x)^{\eta}y^4}{(1+y^2)^2(1+\log(1+y^2))}\leq C.
	\end{equation*}
\end{Lemma}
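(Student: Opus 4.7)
The plan is to combine the first two fractions on the LHS by the algebraic identity
$$\frac{y^2}{1+y^2}-\frac{2y^4}{(1+y^2)^2}=\frac{y^2(1-y^2)}{(1+y^2)^2},$$
so that the expression to be bounded becomes
$$L(x,y):=\frac{a(1+x)y^2}{1+y^2}-\frac{b(1+x)^\eta y^2(y^2-1)}{(1+y^2)^2}+\frac{c(1+x)^\eta y^4}{(1+y^2)^2(1+\log(1+y^2))}.$$
The first piece grows at most linearly in $1+x$, the second piece is negative and of order $(1+x)^\eta$ as soon as $y>1$, and the third piece has $\eta$-growth in $x$ but carries a logarithmic damping in $y$. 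The overall strategy is to show that for $y$ large the negative second term dominates both positive contributions, while for $y$ small the constraint $x\le My$ confines the problem to a compact set.

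I would split the admissible region $\{(x,y):0\le x\le My\}$ at $y=Y_0$, for a threshold $Y_0>1$ to be fixed. On $\{y\le Y_0\}$ the constraint yields $x\in[0,MY_0]$, so $L$ is continuous on a compact set and is bounded by a constant $C_1=C_1(M,a,b,c,\eta,Y_0)$. On $\{y\ge Y_0\}$ the elementary limits $\tfrac{y^2(y^2-1)}{(1+y^2)^2}\to 1$ and $\tfrac{1}{1+\log(1+y^2)}\to 0$, together with $\tfrac{y^2}{1+y^2}\le 1$ and $\tfrac{y^4}{(1+y^2)^2}\le 1$, allow me, for any fixed $\varepsilon\in(0,1/2)$, to pick $Y_0$ so large that $\tfrac{y^2(y^2-1)}{(1+y^2)^2}\ge 1-\varepsilon$ and $\tfrac{c}{1+\log(1+y^2)}\le b\varepsilon$ whenever $y\ge Y_0$. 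These bounds yield, uniformly in $x\ge 0$,
$$L(x,y)\le a(1+x)-b(1-\varepsilon)(1+x)^\eta+b\varepsilon(1+x)^\eta=a(1+x)-b(1-2\varepsilon)(1+x)^\eta.$$

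It remains to tune $\varepsilon$ to the two hypotheses. If $\eta=1$ and $b>a$, I pick $\varepsilon$ so small that $b(1-2\varepsilon)>a$; then the right-hand side is $(a-b(1-2\varepsilon))(1+x)\le 0$. If $\eta>1$ with $a,b>0$, any $\varepsilon\in(0,1/2)$ works, because the scalar function $u\mapsto au-b(1-2\varepsilon)u^\eta$ is bounded above on $[1,\infty)$ by an explicit constant $C_2$ attained at its unique critical point. Setting $C:=\max\{C_1,C_2,0\}$ proves the claim. The only delicate step is matching the loss $\varepsilon$ in the coefficient to the gap $b-a$ in the borderline case $\eta=1$, which is exactly what the strict inequality $b>a$ is designed to absorb; when $\eta>1$ the superlinear $(1+x)^\eta$ term swallows any linear $x$-growth, so $a,b>0$ alone suffices.
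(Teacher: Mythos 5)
Your proof is correct, but it is organized differently from the paper's. The paper uses the constraint $x\le My$ globally: since $t\mapsto t^4/(1+t^2)^2$ and $t\mapsto 1+\log(1+t^2)$ are nondecreasing, it replaces $y$ by $x/M$ in the negative term and in the log-damped term (and bounds $y^2/(1+y^2)\le 1$ in the first), thereby reducing the whole expression to a single-variable function of $x$ which tends to $-\infty$ as $x\to\infty$ under either hypothesis, hence is bounded above on $[0,\infty)$. You instead combine the first two fractions via $\frac{y^2}{1+y^2}-\frac{2y^4}{(1+y^2)^2}=\frac{y^2(1-y^2)}{(1+y^2)^2}$, split the admissible region at a threshold $y=Y_0$, use the constraint only to get compactness (hence a bound $C_1$) for $y\le Y_0$, and for $y\ge Y_0$ obtain the bound $a(1+x)-b(1-2\varepsilon)(1+x)^\eta$ uniformly in all $x\ge 0$, finishing with the elementary one-variable estimate in $x$. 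The two routes exploit the same structural fact (the $-2b$ term dominates asymptotically, with the log factor killing the $c$-term), but your decomposition makes explicit that the hypothesis $x\le My$ is only needed on the bounded-$y$ region, where the $(1+x)^\eta$ growth would otherwise be uncontrolled, whereas the paper's substitution $y\ge x/M$ gives a shorter, purely one-variable argument. Two cosmetic remarks: state explicitly that $\varepsilon$ is fixed first (depending only on $a,b,\eta$) and $Y_0$ afterwards, so there is no circularity with $C_1=C_1(Y_0)$; and in the case $\eta>1$ the maximum of $u\mapsto au-b(1-2\varepsilon)u^\eta$ on $[1,\infty)$ may sit at the endpoint $u=1$ rather than at the interior critical point, which does not affect boundedness.
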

\begin{proof}
Since   $My\geq {x}$,  we have
\begin{align*}
	&\hspace*{-1cm}\frac{a (1+x)y^2+b(1+x)^{\eta}y^2}{1+y^2}-\frac{2b (1+x)^{\eta}y^4}{(1+y^2)^2}
+\frac{c (1+x)^{\eta}y^4}{(1+y^2)^2(1+\log(1+y^2))}\\
\leq \, &a (1+x) +b(1+x)^{\eta}-2b (1+x)^{\eta}\frac{\big(\frac{x}{M})^4}{(1+(\frac{x}{M})^2\big)^2}
+\frac{c (1+x)^{\eta}}{\big(1+\log(1+(\frac{x}{M})^2) \big)}.
\end{align*}
When $\eta>1$ and $a,b>0$ or $\eta=1$ and $b>a>0$, the latter expression tends to $-\infty$ for $x\to \infty$ which implies the statement of the lemma.
% $$\lim_{x\rightarrow\infty}a (1+x) +b(1+x)^{\eta}-2b (1+x)^{\eta}\frac{(\frac{x}{M})^4}{\big(1+(\frac{x}{M})^2\big)^2}
%+\frac{c (1+x)^{\eta}}{\big(1+\log(1+(\frac{x}{M})^2)\bi)}=-\infty,$$
%which implies the desired estimate.
\end{proof}

We are now ready to prove Theorem \ref{Non breaking} following  \cite{Ren-Tang-Wang-2020-Arxiv} to large extent.

\begin{proof}[Proof of Theorem \ref{Non breaking}]
	Assume $s>5/2$ and let $u_0$ be an $H^s$-valued $\mathcal{F}_0$-measurable random variable with $\E\|u_0\|^2_{H^s}<\infty$. Let $h(t,u)=h(u)=a \left(1+\|u\|_{W^{1,\infty}}\right)^{\theta}u$ with $\theta>1/2$ and $a\neq0$.
	
	% Using the  mean value theorem for $\left(1+\|u\|_{W^{1,\infty}}\right)^{\theta}$ and 
	For $r>3/2$, the  embedding $H^r\hookrightarrow W^{1,\infty}$ implies that we have  for any $u,v\in H^r$ the estimate 
$$ \sup_{\|u\|_{H^r},\|v\|_{H^r}\le N}\left\{{\bf 1}_{\{u\ne v\}}  \frac{\|h(u)-h(v)\|_{H^r}}{\|u-v\|_{H^r}}\right\} \le q(N),\ \ N\ge 1. $$
	This means that one can establish the pathwise uniqueness for \eqref{SDGH non blow up Eq} in $H^r$ with $r>3/2$. Hence, in the same way as proving Theorem \ref{Local pathwise solution}, one can show that  \eqref{SDGH non blow up Eq} admits a unique pathwise solution $u$ in $H^s$ with $s>5/2$ %, with initial data $u_0$ 
	and maximal existence time $\tau^*$.  We recall the definition of the mollifier $T_\e$ from  Section \ref{Preliminary Results} and define
	\begin{align*}
	\tau_{m}=\inf\left\{t\geq0: \|u(t)\|_{H^s}\geq m\right\}.
	\end{align*}
	Applying the It\^{o} formula to 
	$\|T_\e u(t)\|^2_{H^s}$ gives
	\begin{align*}
	{\rm d}\|T_\e u\|^2_{H^s}
	= \,\,& 2a\left(1+\|u\|_{W^{1,\infty}}\right)^\theta\left( T_\e u, T_\e u\right)_{H^s}{\rm d}W
	-2\left(T_\e\left[(u-\g)u_x\right], T_\e u\right)_{H^s}{\rm d}t\\
	&-2\left(T_\e F(u), T_\e u\right)_{H^s}{\rm d}t
	+a^2\left(1+\|u\|_{W^{1,\infty}}\right)^{2\theta}\|T_\e u\|_{H^s}^{2}{\rm d}t.\notag
	\end{align*}
	Again, using It\^{o} formula to 
	$\log(1+\|T_\e u\|^2_{H^s})$ yields
	\begin{align*}
	{\rm d}\log(1+\|T_\e u\|^2_{H^s})
	=\,\,&\frac{2a\left(1+\|u\|_{W^{1,\infty}}\right)^\theta}{1+\|T_\e u\|^2_{H^s}}\left(T_\e u, T_\e u\right)_{H^s}{\rm d}W\notag\\
	&-\frac{1}{1+\|T_\e u\|^2_{H^s}}
	\left\{2\left(T_\e \left[(u-\g)u_x\right], T_\e u\right)_{H^s}
	+2\left(T_\e F(u), T_\e u\right)_{H^s}\right\}{\rm d}t\notag\\
	&+\frac{a^2\left(1+\|u\|_{W^{1,\infty}}\right)^{2\theta}}{1+\|T_\e u\|^2_{H^s}}
	\|T_\e u\|_{H^s}^2{\rm d}t
	-2\frac{a^2\left(1+\|u\|_{W^{1,\infty}}\right)^{2\theta}}{(1+\|T_\e u\|^2_{H^s})^2}\|T_\e u\|_{H^s}^{4}{\rm d}t.
	\end{align*}
 Lemma \ref{uux+F Hs inner product} and \eqref{mollifier property 5} imply that there is a $Q=Q(s,c_0,\g)>0$ such that for any $t>0$ we have 
	\begin{align*}
	&\E \log\big(1+\|T_\e u(t\wedge \tau_m)\|^2_{H^s}\big )
	-\E \log\big(1+\|T_\e u_0\|^2_{H^s}\big)\\
	=\,\,&\E\int_0^{t\wedge \tau_m}\frac{1}{1+\|T_\e u\|^2_{H^s}}
	\left\{-2\left(T_\e [(u-\g)u_x], T_\e u\right)_{H^s}
	-2\left(T_\e F(u), T_\e u\right)_{H^s}\right\}{\rm d}t'\notag\\
	&+\E\int_0^{t\wedge \tau_m}
	 \frac{1}{1+\|T_\e u\|^2_{H^s}}
	a^2\left(1+\|u\|_{W^{1,\infty}}\right)^{2\theta}\|T_\e u\|_{H^s}^2\, {\rm d}t'\\
	&-\E\int_0^{t\wedge \tau_m}\frac{2}{(1+\|T_\e u\|^2_{H^s})^2}
	a^2\left(1+\|u\|_{W^{1,\infty}}\right)^{2\theta}
	\|T_\e u\|_{H^s}^{4} \, {\rm d}t'\\
	\leq\,\,&\E\int_0^{t\wedge \tau_m}\left[\frac{1}{1+\|T_\e u\|^2_{H^s}}
	\left\{2Q\left(1+\|u\|_{W^{1,\infty}}\right)\|u\|^2_{H^s}+
	a^2\left(1+\|u\|_{W^{1,\infty}}\right)^{2\theta}\|T_\e u\|_{H^s}^2\right\}\right]\, {\rm d}t'	\\
	&-\E\int_0^{t\wedge \tau_m}\frac{1}{(1+\|T_\e u\|^2_{H^s})^2}
	2a^2\left(1+\|u\|_{W^{1,\infty}}\right)^{2\theta}
	\|T_\e u\|_{H^s}^{4} \, {\rm d}t'.
	\end{align*}
	Notice that for any $T>0$, $T_\e u$ tends to $u$ in $C\left([0,T],H^{s}\right)$ almost surely as $\e\rightarrow0$. Then, by \eqref{mollifier property 5} and the dominated convergence theorem, the last estimate leads to
	\begin{align*}
	&\E \log \big(1+\|u(t\wedge \tau_m)\|^2_{H^s} \big)
	-\E \log(1+\|u_0\|^2_{H^s})\\
	=\,\, &\lim_{\e\rightarrow0}\left(\E \log\big(1+\|T_\e u(t\wedge \tau_m)\|^2_{H^s}\big)
	-\E \log\big(1+\|T_\e u_0\|^2_{H^s}\big)\right)\\
	\leq\,\,&\lim_{\e\rightarrow0}\E\int_0^{t\wedge \tau_m} \frac{1}{1+\|T_\e u\|^2_{H^s}}
	\left\{2Q\left(1+\|u\|_{W^{1,\infty}}\right)\| u\|^2_{H^s}+
	a^2\left(1+\|u\|_{W^{1,\infty}}\right)^{2\theta}\|T_\e u\|_{H^s}^2\right\} \,{\rm d}t'	\\
	&-\lim_{\e\rightarrow0}\E\int_0^{t\wedge \tau_m}\frac{1}{(1+\|T_\e u\|^2_{H^s})^2}
	2a^2\left(1+\|u\|_{W^{1,\infty}}\right)^{2\theta}
	\|T_\e u\|_{H^s}^{4}\,{\rm d}t'\\
	=\,\,&\E\int_0^{t\wedge \tau_m}\frac{2Q\left(1+\|u\|_{W^{1,\infty}}\right)\| u\|^2_{H^s}+
		a^2\left(1+\|u\|_{W^{1,\infty}}\right)^{2\theta}\|u\|_{H^s}^2}{1+\| u\|^2_{H^s}}\,{\rm d}t'\\
	&-\E\int_0^{t\wedge \tau_m}
	-\frac{2a^2\left(1+\|u\|_{W^{1,\infty}}\right)^{2\theta}\| u\|_{H^s}^{4}}{(1+\| u\|^2_{H^s})^2}\,{\rm d}t'.
	\end{align*}
	Since we have assumed \eqref{a theta condition},  Lemma \ref{log lemma} immediately shows that there are constants $K_1,K_2>0$ such that
	\begin{align*}
	&\hspace*{-0.5cm}\E \log\big(1+\| u(t\wedge \tau_m)\|^2_{H^s}\big)-\E \log\big (1+\| u_0\|^2_{H^s}\big)	\\
	\leq\,\,&\E\int_0^{t\wedge \tau_m} K_1-
	K_2\frac{a^2\left(1+\|u\|_{W^{1,\infty}}\right)^{2\theta}\|u\|_{H^s}^4}
	{(1+\|u\|^2_{H^s})^2\left(1+\log(1+\|u\|^2_{H^s})\right)}\,{\rm d}t',
	\end{align*}
	which means that for some $C(u_0,K_1,K_2,t)>0$,
	\begin{align}
	\E\int_0^{t\wedge \tau_m}
	\frac{a^2\left(1+\|u\|_{W^{1,\infty}}\right)^{2\theta}\| u\|_{H^s}^{4}}
	{(1+\|u\|^2_{H^s})^2\left(1+\log(1+\|u\|^2_{H^s})\right)}\,{\rm d}t'
	\leq C(u_0,K_1,K_2,t)<\infty,\label{to use BDG 1}
	\end{align}
	and 
	\begin{align}
	\E\int_0^{t\wedge \tau_m}\left|
	K_1-K_2\frac{a^2\left(1+\|u\|_{W^{1,\infty}}\right)^{2\theta}
		\| u\|_{H^s}^{4}}
	{(1+\|u\|^2_{H^s})^2\left(1+\log(1+\|u\|^2_{H^s})\right)}\right|\,{\rm d}t'
	\leq C(u_0,K_1,K_2,t)<\infty.\label{to use BDG 2}
	\end{align}
	Next, we notice that	there is a function  $\delta:[0,\infty)\to [0,\infty)$  with  $\delta(\e)\rightarrow 0$
	when $\e\rightarrow 0$ such that
	\begin{align*}
	& \hspace*{-0.5cm}\frac{2Q\left(1+\|u\|_{W^{1,\infty}}\right)\|u\|^2_{H^s}
		+	a^2\left(1+\|u\|_{W^{1,\infty}}\right)^{2\theta}\|T_\e u\|_{H^s}^2}{1+\|T_\e u\|^2_{H^s}}
	-\frac{2a^2\left(1+\|u\|_{W^{1,\infty}}\right)^{2\theta}
		\|T_\e u\|_{H^s}^{4}}{(1+\|T_\e u\|^2_{H^s})^2}\\
	\leq\,\,&\frac{2Q\left(1+\|u\|_{W^{1,\infty}}\right)\|u\|^2_{H^s}
		+	a^2\left(1+\|u\|_{W^{1,\infty}}\right)^{2\theta}\|u\|_{H^s}^2}{1+\|u\|^2_{H^s}}
	-\frac{2a^2\left(1+\|u\|_{W^{1,\infty}}\right)^{2\theta}
		\|u\|_{H^s}^{4}}{(1+\|u\|^2_{H^s})^2}+\delta(\e)
	\end{align*}
	holds.
	Therefore,  for any $T>0$, by using Lemma \ref{log lemma}, the Burkholder-Davis-Gundy inequality \eqref{BDG G} and \eqref{to use BDG 2}, we find that
	\begin{align*}
	&\E\sup_{t\in[0,{T\wedge \tau_m}]}\log\big(1+\|T_\e u\|^2_{H^s}\big)-\E \log\big(1+\|T_\e u_0\|^2_{H^s}\big)\\
	\leq\,\,&
	C\E\left(\int_0^{T\wedge \tau_m}
	\frac{a^2\left(1+\|u\|_{W^{1,\infty}}\right)^{2\theta}\|T_\e u\|_{H^s}^4}{\left(1+\|T_\e u\|^2_{H^s}\right)^2}\,
	{\rm d}t\right)^\frac12\\
	&+\E\int_0^{T\wedge \tau_m}
	\left|
	K_1-K_2\frac{a^2\left(1+\|u\|_{W^{1,\infty}}\right)^{2\theta}
		\| u\|_{H^s}^{4}}
	{(1+\|u\|^2_{H^s})^2\left(1+\log(1+\|u\|^2_{H^s})\right)}+\delta(\e)\right|\,{\rm d}t\\
	%	\leq&
	%	C\E\left(\sup_{t\in[0,{T\wedge \tau_m}]}\left(1+\log(1+\|T_\e u\|^2_{H^s})\right)\int_0^{T\wedge \tau_m}
	%	\frac{a^2\|u\|^{2\theta}_{W^{1,\infty}}\|J_\e u\|_{H^s}^4}{\left(1+\|J_\e u\|^2_{H^s}\right)^2
	%		\left(1+\log(1+\|T_\e u\|^2_{H^s})\right)}
	%	{\rm d}t\right)^\frac12
	%	\\
	%	&+C\E\int_0^{T\wedge \tau_m}
	%	\left|
	%	\frac{2C\|u\|^2_{H^s}\|J_\e u\|_{H^s}+
	%		a^2\|u\|^{2\theta}_{W^{1,\infty}}\|J_\e u\|_{H^s}^2}{1+\|J_\e u\|^2_{H^s}}
	%	-\frac{2a^2\|u\|^{2\theta}_{W^{1,\infty}}
	%		\|J_\e u\|_{H^s}^{4}}{(1+\|J_\e u\|^2_{H^s})^2} 
	%	\right|{\rm d}t\\
	\leq\,\,&
	\frac12\E\sup_{t\in[0,{T\wedge \tau_m}]}\left(1+\log(1+\|T_\e u\|^2_{H^s})\right)
	+C\E\int_0^{T\wedge \tau_m}
	\frac{a^2\left(1+\|u\|_{W^{1,\infty}}\right)^{2\theta}\|T_\e u\|_{H^s}^4}{\left(1+\|T_\e u\|^2_{H^s}\right)^2
		\left(1+\log(1+\|T_\e u\|^2_{H^s})\right)}\,
	{\rm d}t\\[1.5ex]
	&+K_1T+\E\int_0^{T\wedge \tau_m}K_2\frac{a^2\left(1+\|u\|_{W^{1,\infty}}\right)^{2\theta}
		\| u\|_{H^s}^{4}}
	{(1+\|u\|^2_{H^s})^2\left(1+\log(1+\|u\|^2_{H^s})\right)}\,
	{\rm d}t+\delta(\e)T\\
	\leq\,\,&
	\frac12\E\sup_{t\in[0,{T\wedge \tau_m}]}\left(1+\log(1+\|T_\e u\|^2_{H^s})\right)
	+C\E\int_0^{T\wedge \tau_m}
	\frac{a^2\left(1+\|u\|_{W^{1,\infty}}\right)^{2\theta}\|T_\e u\|_{H^s}^4}{\left(1+\|T_\e u\|^2_{H^s}\right)^2
		\left(1+\log(1+\|T_\e u\|^2_{H^s})\right)}\, 
	{\rm d}t\\[1.5ex]
	&+C(u_0,K_1,K_2,T)+\delta(\e)T.
	\end{align*}
	Thus, we use the dominated convergence theorem, Fatou's lemma  and \eqref{to use BDG 1} to obtain finally 
	\begin{align*}
	&\E\sup_{t\in[0,{T\wedge \tau_m}]}\log\big(1+\| u\|^2_{H^s}\big)
	%\leq&\lim_{\e\rightarrow0}\E \log(1+\|T_\e u(t\wedge \tau_m)\|^2_{H^s})\\
	%\leq& 2\E \left(1+\log(1+\|u_0\|^2_{H^s})\right)
	%+C\lim_{\e\rightarrow0}\E\int_0^{T\wedge \tau_m}
	%\frac{a^2\|u\|^{2\theta}_{W^{1,\infty}}\|T_\e u\|_{H^s}^4}{\left(1+\|T_\e u\|^2_{H^s}\right)^2
	%	\left(1+\log(1+\|T_\e u\|^2_{H^s})\right)}
	%{\rm d}t+C(u_0,K_1,K_2,T)\\
	\leq C(u_0,K_1,K_2,T).
	\end{align*}
	Since $\log(1+x)$ is continuous for $x>0$,
	we have that for any $m\ge1$,
	$$\p\{\tau^*<T\}\leq\p\{\tau_{m}<T\}\leq
	\p\left\{\sup_{t\in[0,T]}\log(1+\|u\|^2_{H^s})\geq \log(1+m^2)\right\}\leq \frac{C(u_0,K_1,K_2,T)}{\log(1+m^2)}.$$
	Letting $m\rightarrow\infty$ forces
	$\p\{\tau^*<T\}=0$ for any $T>0$, which means $\p\{\tau^*=\infty\}=1$.
\end{proof}

\section{Proofs of Theorems \ref{Decay result}-\ref{Blow-up rate}: Non-autonomous linear noise case}\label{global existence and wave breaking}

In this section, we study  \eqref{DGH linear noise 2} with  linear noise. Depending on the strength of the noise  in \eqref{DGH linear noise 2}, we 
provide either  the global existence of pathewise solutions or the  precise  blow-up scenarios for  the maximal pathwise solution. 
% Motivated by \cite{GlattHoltz-Vicol-2014-AP,Rockner-Zhu-Zhu-2014-SPTA,Tang-2018-SIMA}, 
 As discussed in Remark \ref{girsanov}, we rely on  the  Girsanov-type transform
\begin{align}
v=\frac{1}{\beta(\omega,t)} u,\ \
\beta(\omega,t)={\rm e}^{\int_0^tb(t') {\rm d} W_{t'}-\int_0^t\frac{b^2(t')}{2} {\rm d}t'}.\label{transform}
\end{align}

We first collect some properties of $v$.

\begin{Proposition}\label{pathwise solutions v}
	Let $s>3/2$, $\a=1$ and $h(t,u)=b(t) u$ such that $b(t)$ satisfies Assumption \ref{Assumption-3}. Let $\s=\left(\Omega, \mathcal{F},\p,\{\mathcal{F}_t\}_{t\geq0}, W\right)$ be fixed in advance. If $u_0(\omega,x)$ is an $H^s$-valued $\mathcal{F}_0$ measurable random variable with $\E\|u_0\|^2_{H^s}<\infty$ and $(u,\tau^*)$ is the corresponding unique maximal solution to \eqref{DGH linear noise 1},
	then for any $c_0,\g\in\R$ and for $t\in[0,\tau^*)$, the process $v$ defined by \eqref{transform} solves the following problem on $\T$ almost surely,
	\begin{equation} \label{periodic Cauchy problem transform}
	\left\{\begin{aligned}
	&v_t+\beta vv_x-\g v_x+\beta (1-\partial_{xx}^2)^{-1}\partial_x\left(v^2+\frac{1}{2} v_x^2\right) +(c_0+\g)(1-\partial_{xx}^2)^{-1}\partial_xv=0,\\
	&v(\omega,0,x)=u_0(\omega,x).
	\end{aligned} \right.
	\end{equation}
	Moreover, we have 
	$v\in C\left([0,\tau^*);H^s\right)\cap C^1([0,\tau^*) ;H^{s-1})$ $\p-a.s.$ and, if $s>3$, then it holds
	\begin{equation}
	\p\big\{\|v(t)\|_{H^1}=\|u_0\|_{H^1}\ \text{for all}\ t\geq0\big\}=1.\label{H1 conservation}
	\end{equation}
\end{Proposition}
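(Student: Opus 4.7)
The plan is to derive the $v$-equation by an Itô computation on $1/\beta$, then transfer regularity from $u$ to $v$, and finally verify the $H^1$-conservation by a pathwise Camassa--Holm energy identity.

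For the first step, I would apply the scalar Itô formula to $1/\beta$: since $d\beta = b(t)\beta\,dW$, this gives $d(1/\beta) = -(b/\beta)\,dW + (b^2/\beta)\,dt$. Then, applying the semimartingale product rule to $v = u\cdot (1/\beta)$ with the cross variation $d[u, 1/\beta] = -(b^2 u/\beta)\,dt$ (coming from the $dW$-coefficient $bu$ in $du$), the two $dW$-terms and the two $b^2/\beta$-terms cancel, leaving $dv = -\beta^{-1}[(u-\gamma)u_x + F(u)]\,dt$ on $[0,\tau^*)$. Substituting $u = \beta v$ and using the homogeneities $F_1(\beta v)=\beta^2 F_1(v)$, $F_2(\beta v)=\beta^2 F_2(v)$, $F_3(\beta v)=\beta F_3(v)$ yields \eqref{periodic Cauchy problem transform}. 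Because $\beta$ depends only on $(t,\omega)$, this argument can be run mode-by-mode on the Fourier expansion of $u$, giving the identity as an equality in $H^s$.

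The regularity claim then follows directly: $v \in C([0,\tau^*); H^s)$ is inherited from $u\in C([0,\tau^*); H^s)$ (Theorem \ref{Local pathwise solution}) together with the strict positivity and continuity of $\beta$, while the $C^1([0,\tau^*); H^{s-1})$-statement is read off from the transformed equation, since each term on its right-hand side lies in $H^{s-1}$ continuously in $t$ by Lemma \ref{Moser estimate}, Lemma \ref{F(u) lemma}, and the smoothing of $(1-\partial_{xx}^2)^{-1}\partial_x$.

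For the $H^1$-conservation when $s>3$, I would pair the $v$-equation with $v - v_{xx}$ in $L^2(\T)$. The time-derivative contribution is $\tfrac{1}{2}\frac{d}{dt}\|v\|_{H^1}^2$. Using the identity $\int f(v-v_{xx})\,dx = \int [(1-\partial_{xx}^2)f]\,v\,dx$, the linear terms $-\gamma v_x$ and $(c_0+\gamma)(1-\partial_{xx}^2)^{-1}\partial_x v$ reduce to $\int v_x v\,dx = 0$ by antisymmetry. The key cancellation is between $\beta\int vv_x(v-v_{xx})\,dx = -\beta\int v v_x v_{xx}\,dx$ and $\beta\int (1-\partial_{xx}^2)^{-1}\partial_x(v^2+v_x^2/2)(v-v_{xx})\,dx = \beta \int (2vv_x+v_xv_{xx})v\,dx = \beta\int v v_x v_{xx}\,dx$, which sum to zero. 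All integrations by parts are justified because $v\in H^3$, so $\frac{d}{dt}\|v\|_{H^1}^2=0$ pathwise, giving the stated conservation. The only real point of technical care is the first step: although the algebra is short, one must justify the Itô product rule for the $H^s$-valued semimartingale $u$ and the positive scalar semimartingale $\beta$; since $\beta$ carries no spatial dependence this reduces to scalar Itô calculus coordinatewise, so no new tool is required, and Steps 2 and 3 then proceed along familiar Camassa--Holm lines.
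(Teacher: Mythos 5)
Your proposal is correct and follows essentially the same route as the paper: It\^{o}'s formula for $1/\beta$ plus the product rule (with the cross-variation cancelling the noise), substitution $u=\beta v$ to get \eqref{periodic Cauchy problem transform}, regularity of $v_t$ read off from the equation via Lemma \ref{F(u) lemma}, and a pathwise energy identity for \eqref{H1 conservation}. The only (cosmetic) difference is in the last step: you pair the nonlocal equation with $(1-\partial_{xx}^2)v$, whereas the paper first rewrites it in the local form \eqref{random v equation} and multiplies by $v$ --- these are the same computation by self-adjointness of $1-\partial_{xx}^2$, justified since $s>3$.
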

\begin{proof}
	Since $b(t)$ satisfies Assumption \ref{Assumption-3},  $h(t,u)=b(t) u$ satisfies Assumption \ref{Assumption-1}. Consequently, Theorem \ref{Local pathwise solution} implies  that \eqref{DGH linear noise 1}  (that is \eqref{periodic Cauchy problem} with $h(t,u)=b(t) u$) has a unique maximal solution $(u,\tau^*)$. 
	%	 Let $H=\log \eta^{-1}$, then $$ {\rm d}H=b(t'){\rm d} W_{t}-\frac{b^2(t')}{2} {\rm d}t.$$
	A direct computation with the It\^{o} formula yields
	$${\rm d}\frac{1}{\beta}=-b(t)\frac{1}{\beta} {\rm d}W+b^2(t)\frac{1}{\beta} {\rm d}t.$$
	Therefore we arrive at
	\begin{align}
	{\rm d}v
%	=&\frac{1}{\beta} {\rm d}u+u {\rm d}\frac{1}{\beta}+ {\rm d}\frac{1}{\beta} {\rm d}u\notag\\
	=\,\,&\frac{1}{\beta}\left[-\left[\left(u-\g\right)\partial_xu+F(u)\right]{\rm d}t+b(t) u \,{\rm d}W\right]+u\left[-b(t)\frac{1}{\beta} {\rm d}W+b^2(t)\frac{1}{\beta} {\rm d}t\right]-b^2(t)\frac{1}{\beta}u\, {\rm d}t\notag\\
	=\,\,&\frac{1}{\beta}\left[-\left(\left(u-\g\right)\partial_xu+F(u)\right){\rm d}t\right]\notag\\
	=\,\,&\left\{-\beta vv_x+\g v_x-\beta (1-\partial_{xx}^2)^{-1}\partial_x\left(v^2+\frac{1}{2} v_x^2\right) -(c_0+\g)(1-\partial_{xx}^2)^{-1}v_x\right\}\,{\rm d}t,\label{v equation}
	\end{align}
	which is \eqref{periodic Cauchy problem transform}$_1$.
	Since $v(0)=u_0(\omega,x)$, we see that $v$  satisfies \eqref{periodic Cauchy problem transform}. Moreover,  Theorem \ref{Local pathwise solution} implies $u\in C\left([0,\tau^*);H^s\right)$ $\p-a.s.$, so is $v$. Besides, from Lemma \ref{F(u) lemma} and \eqref{periodic Cauchy problem transform}$_1$, we see that for a.e. $\omega\in\Omega$, $v_t=\g v_x-\beta vv_x-\beta (F_1(v)+F_2(v))-F_3(v)\in C([0,\tau^*); H^{s-1})$. Hence $v\in C^1\left([0, \tau^*);H^{s-1}\right)$ $\p-a.s.$ 

	Notice that if $s>3$, \eqref{periodic Cauchy problem transform}$_1$ is equivalent to
	\begin{align}
	v_{t}-v_{xxt}+c_0v_x+\g v_{xxx}+3\beta vv_{x}=2\beta v_{x}v_{xx}+\beta vv_{xxx}.\label{random v equation}
	\end{align}
	Multiplying both sides of \eqref{random v equation} by $v$ and then integrating the resulting equation on $x\in\T$, we see that for a.e. $\omega\in\Omega$ and for all $t>0$,
	$$\frac{\rm d}{{\rm d}t}\int_{\T}\left(v^2+v_x^2\right)\, {\rm d}x=0,$$ which implies \eqref{H1 conservation}.
\end{proof}

\subsection{Theorem \ref{Decay result}:  Global existence for weak noise I}

Now we prove the first global existence result, which is  motivated by \cite{GlattHoltz-Vicol-2014-AP,Rockner-Zhu-Zhu-2014-SPTA,Tang-2018-SIMA,Rohde-Tang-2020-JDDE}.

\begin{proof}[Proof of Theorem \ref{Decay result}]
	To begin with, we apply the operator $D^s$ to \eqref{v equation}, multiply both sides of the resulting equation by $D^sv$ and integrate over $\T$ to obtain for   a.e. $\omega\in\Omega$
	\begin{align*}
	\frac{1}{2}\frac{\rm d}{{\rm d}t}\|v(t)\|^2_{H^s}
	=\,\,&\g\int_{\T}D^sv\cdot D^sv_x \,{\rm d}x
	-\beta(\omega,t)	\int_{\T}D^sv\cdot D^s\left[vv_x\right]\,{\rm d}x
	-\beta(\omega,t)\int_{\T}D^sv\cdot D^sF(v)\, {\rm d}x\notag\\
	=\,\,	&-\beta(\omega,t)\int_{\T}D^sv\cdot D^s\left[vv_x\right]\,{\rm d}x
	-\beta(\omega,t)\int_{\T}D^sv\cdot D^sF(v) \,{\rm d}x.
	\end{align*}
	Using Lemma \ref{Kato-Ponce commutator estimate}, integration by parts and Lemma \ref{F(u) lemma},
	we conclude that there is a $C=C(s)>1$ such that for a.e. $\omega\in\Omega$ we have 
	%  we have actually find that for some $C=C(s,c_0,\g)$,
	%\begin{align*}
	%\frac{\rm d}{{\rm d}t}\|v(t)\|^2_{H^s}+\kk^2 \|v(t)\|^2_{H^s}
	%\leq C_s\widetilde{\eta}\ \|v\|_{W^{1,\infty}}\|v\|_{H^s}^2.
	%\end{align*}
	\begin{align*}
	\frac{\rm d}{{\rm d}t}\|v(t)\|^2_{H^s}
	\leq C\beta(t)\ \|v\|_{W^{1,\infty}}\|v\|_{H^s}^2,
	\end{align*}
	where $\beta$ is given in \eqref{transform}	(If necessary, $T_\e$ can be used as in Lemma  \ref{blow-up criterion lemma}).
	Then
	$w={\rm e}^{-\int_0^tb(t') {\rm d} W_{t'}}u={\rm e}^{-\int_0^t\frac{b^2(t')}{2} {\rm d}t'}v$ satisfies
	\begin{align*}
	\frac {\rm d}{ {\rm d}t}\|w(t)\|_{H^s}+\frac{b^2(t)}{2}\|w(t)\|_{H^s}
	\leq C\alpha(\omega,t) \|w(t)\|_{W^{1,\infty}}\|w(t)\|_{H^s},\ \
	\alpha(\omega,t)={\rm e}^{\int_0^tb(t') {\rm d} W_{t'}}. 
	\end{align*}
	Let $R>1$ and $\lambda_1>2$. Assume $\|u_0\|_{H^s}<\frac{b_*}{CK\lambda_1R} <\frac{b_*}{CK\lambda_1}$ almost surely and define
	\begin{align}
	\tau_{1}=\inf\left\{t>0:\alpha(\omega,t) \|w\|_{W^{1,\infty}}
	=\|u\|_{W^{1,\infty}}>\frac{b^2(t)}{C\lambda_1 }\right\}.\label{global time tau}
	\end{align}
 Then it follows from the embedding $\|u(0)\|_{W^{1,\infty}}\leq K\|u(0)\|_{H^s}<\frac{b_*}{C\lambda_1}$ that
	$
	\p\{\tau_{1}>0\}=1,
	$
	and for $t\in[0,\tau_{1})$,
	\begin{align*}
	\frac {\rm d}{ {\rm d}t}\|w(t)\|_{H^s}+\frac{(\lambda_1-2)b^2(t)}{2\lambda_1}\|w(t)\|_{H^s}
	\leq 0.
	\end{align*}
	The above inequality and $w={\rm e}^{-\int_0^tb(t'){\rm d} W_{t'}}u$ imply that for a.e. $\omega\in\Omega$, for any $\lambda_2>\frac{2\lambda_1}{\lambda_1-2}$ and for $t\in[0,\tau_{1})$,
	\begin{align}
	\|u(t)\|_{H^s}
	\leq\,\,& \|w_0\|_{H^s}{\rm e}^{\int_0^tb(t')  {\rm d} W_{t'}-\int_0^t\frac{(\lambda_1-2)b^2(t')}{2\lambda_1}\, {\rm d}t'}\notag\\
	%=\|u_0\|_{H^s}{\rm e}^{\beta W(t)-\frac{\beta^2}{4}t}
	=\,\,&\|u_0\|_{H^s}{\rm e}^{\int_0^tb(t') \,{\rm d} W_{t'}-\int_0^t\frac{b^2(t')}{\lambda_2} {\rm d}t'}{\rm e}^{-\frac{\left((\lambda_1-2)\lambda_2-2\lambda_1\right)}{2\lambda_1\lambda2}\int_0^tb^2(t') \,{\rm d}t'}.
	\label{Extracting some damping}
	\end{align}
	Define the stopping time
	\begin{equation}\label{tau 2 Girsanov}
	\tau_{2}
	=\inf\left\{t>0:{\rm e}^{\int_0^tb(t') {\rm d} W_{t'}-\int_0^t\frac{b^2(t')}{\lambda_2} {\rm d}t'}>R\right\}.
	\end{equation}
	Notice that $\p\{\tau_{2}>0\}=1$. From \eqref{Extracting some damping}, we have that almost surely
	\begin{align}
	\|u(t)\|_{H^s}<\,\,& \frac{b_*}{CK\lambda_1R}\times R\times
	{\rm e}^{-\frac{\left((\lambda_1-2)\lambda_2-2\lambda_1\right)}{2\lambda_1\lambda2}\int_0^tb^2(t') \,{\rm d}t'}\notag\\
	=\,\,&\frac{b_*}{CK\lambda_1}
	{\rm e}^{-\frac{\left((\lambda_1-2)\lambda_2-2\lambda_1\right)}{2\lambda_1\lambda2}\int_0^tb^2(t') \,{\rm d}t'}
	\leq\frac{b_*}{CK\lambda_1},\ \ t\in[0,\tau_{1}\wedge \tau_{2}).
	\label{u energy estimate R theta}
	\end{align}
 By Assumption \ref{Assumption-3}, \eqref{u energy estimate R theta} and \eqref{global time tau}, we find that on $[0,\tau_{1}\wedge \tau_{2})$,
	$$\|u(t)\|_{W^{1,\infty}}\leq K\|u(t)\|_{H^s}\leq\frac{b_*}{C\lambda_1}\leq \frac{b^2(t)}{C\lambda_1 }\ \ \p-a.s.,$$
which means
	\begin{equation}\label{tau 1 > tau 2}
	\p\{\tau_{1}\geq\tau_{2}\}=1.
	\end{equation}
	Therefore it follows from \eqref{u energy estimate R theta} that
	$$\p\left\{
	\|u(t)\|_{H^s}<\frac{b_*}{CK\lambda_1}
	{\rm e}^{-\frac{\left((\lambda_1-2)\lambda_2-2\lambda_1\right)}{2\lambda_1\lambda2}\int_0^tb^2(t') \,{\rm d}t'}	\ {\rm\ for\ all}\ t>0
	\right\}\geq
	\p\{\tau_{2}=\infty\}.$$
	We apply \ref{exit time eta} in Lemma \ref{eta Lemma} to find that
	\begin{equation*}
	\p\{\tau_{2}=\infty\}>1-\left(\frac{1}{R}\right)^{2/\lambda_2},
	\end{equation*}
	which completes the proof.
\end{proof}

\subsection{Theorem \ref{Global existence result}:  Global existence for weak noise  II}
Let $\beta(\omega,t)$ be given as in \eqref{transform}. With Proposition \eqref{pathwise solutions v} at hand, we can proceed to prove Theorem \ref{Global existence result}.
We see that for a.e. $\omega\in\Omega$, the transform $v(\omega,t,x)$  solves \eqref{periodic Cauchy problem transform} on $[0,\tau^*)$. Moreover, since $H^s\hookrightarrow C^2$ for $s>3$, we have $v,v_x\in C^1\left([0, \tau^*)\times\T\right)$. Then for a.e. $\omega\in\Omega$, for any $x\in\T$ and $c_0,\g\in\R$, the problem
\begin{equation} \label{particle line}
\left\{\begin{aligned}
&\frac{{\rm d}q(\omega,t,x)}{{\rm d}t}=\beta(\omega,t)v(\omega,t,q(\omega,t,x))-\g,\ \ \ \ t\in[0,\tau^*),\\
&q(\omega,0,x)=x,\ \ \ x\in \T,
\end{aligned} \right.
\end{equation}
has a unique solution $q(\omega,t,x)$ such that $q(\omega,t,x)\in C^1([0,\tau^*)\times \T)$ for a.e $\omega\in\Omega$.
%\begin{Lemma}\label{increasing diffeomorphism}
%For any $(t,x)\in [0,\tau^*) \times \T$, $q_x(\omega,t,x)>0 \ \ \ \p-a.s.,$ and the map $q(\omega,t,\cdot)$ is an increasing diffeomorphism of $\T$ $\p-a.s.$
%\end{Lemma}
%\begin{proof}
Moreover, differentiating \eqref{particle line} with respect to $x$ yields that for a.e. $\omega\in\Omega$,
\begin{equation*}
\left\{\begin{aligned}
&\frac{dq_x(\omega,t,x)}{dt}=\beta(\omega,t)v_x(\omega,t,q)q_x,\ \ \ \ t\in[0,\tau^*),\\
&q_x(\omega,0,x)=1,\ \ \ x\in \T.
\end{aligned} \right.
\end{equation*}
For a.e. $\omega\in\Omega$, we solve the above equation to obtain
$$q_x(\omega,t,x)=\exp{\left(\int_0^t\beta(\omega,t')v_x(\omega,t',q(\omega,t',x))\ {\rm d}t'\right)}.$$ Thus for a.e. $\omega\in\Omega$, $q_x>0$, $(t,x)\in[0, \tau^*)\times \T$.  
%\end{proof}
On the other hand, if $v$ solves \eqref{periodic Cauchy problem transform} (or equivalently \eqref{random v equation}) $\p-a.s.$, then the momentum variable $V=v-v_{xx}$ satisfies
\begin{equation}\label{V equation}
V_{t}+c_0 v_x+\beta vV_{x}+2\beta V v_{x}+\gamma v_{xxx}=0\ \ \p-a.s.
\end{equation}
Particularly, if $c_0+\g=0$, \eqref{V equation} becomes
\begin{equation*} 
V_{t}+c_0 v_x+\beta vV_{x}+2\beta V v_{x}+\gamma v_{xxx}=V_{t}-\g V_x+\beta vV_{x}+2\beta V v_{x}=0\ \ \p-a.s.,
\end{equation*}
which means
\begin{align*}
\frac{\rm d}{{\rm d}t}\left[V(\omega,t,q(\omega,t,x))q_x^{2}(\omega,t,x)\right]
%=&\frac{\rm d}{{\rm d}t}\left[\left(V(\omega,t,q(\omega,t,x))+\frac{c_0}{2}
%+\frac{\g}{2}\right)q_x^{2}(\omega,t,x)\right]\\
%=&V_tq^{2}_x+V_xq_tq^{2}_x+2\left(V+\frac{c_0}{2}+\frac{\g}{2}\right)q_xq_{xt}\\
%=&q_x^2\left[V_t+V_x(\beta v-\g)+2\eta \left(V+\frac{c_0}{2}+\frac{\g}{2}\right)v_x\right]\\
=&q_x^2\left[V_t+\beta v V_x-\g V_x+2\beta V v_x\right]=0\ \ \p-a.s.
\end{align*}
This, and $q_x(\omega,0,x)=1$ imply that $$V(\omega,t,q(\omega,t,x))q_x^{2}(\omega,t,x)=V_0(\omega,x).$$ Consequently, we have ${\rm sign}(V)={\rm sign}(V_0)$. Besides, since $v=G_{\T}*V$ with $G_{\T}>0$ given in \eqref{Helmboltz operator}, we have ${\rm sign}(v)={\rm sign}(V)$. Summarizing the above analysis, we have the following result:
\begin{Lemma}\label{same sign with initial data}
	Assume $c_0+\g=0$ and $s>3$. Let $V_0(\omega,x)=(1-\partial_{xx}^2)u_0(\omega,x)$ and $V(\omega,t,x)=v(\omega,t,x)-v_{xx}(\omega,t,x)$, where $v(\omega,t,x)$ solves \eqref{periodic Cauchy problem transform} on $[0,\tau^*)$ $\p-a.s.$ Then for a.e. $\omega\in\Omega$,  
	\begin{align*}
	%V(\omega,t,q(\omega,t,x))q_x^{2}(\omega,t,x)=&V_0(\omega,x)\ \ \ \p-a.s.,\label{invariance quantity}\\
	{\rm sign}(v)={\rm sign}(V)={\rm sign}&(V_0),\ \ (t,x)\in[0, \tau^*)\times \T.
	\end{align*}
\end{Lemma}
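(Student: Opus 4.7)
The plan is to prove the statement pathwise for a.e.\ $\omega \in \Omega$. Since the transformed equation \eqref{periodic Cauchy problem transform} contains no It\^o integral, all manipulations reduce to classical ODE/PDE computations on each sample path, and the argument follows the deterministic Camassa--Holm template (see e.g.\ Constantin--Escher) with the coefficient $\beta(\omega,t)$ carried along as a bounded continuous random function.

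First, I would exploit the stochastic characteristic ODE \eqref{particle line}. For a.e.\ $\omega$, the regularity $v \in C^1([0,\tau^*); H^{s-1})$ from Proposition \ref{pathwise solutions v} together with the embedding $H^s \hookrightarrow C^2$ (valid since $s > 3$) makes the right-hand side of \eqref{particle line} sufficiently smooth, so $q(\omega,t,\cdot)$ is a $C^1$-diffeomorphism of $\T$ on $[0,\tau^*)$. Differentiating in $x$ yields the explicit formula $q_x = \exp\bigl(\int_0^t \beta(t') v_x(t',q(t',x))\, {\rm d}t'\bigr) > 0$, as already recorded before the lemma.

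Second, I would derive a transport-type equation for the momentum $V = v - v_{xx}$. Applying $(1-\partial_{xx}^2)$ to \eqref{periodic Cauchy problem transform}$_1$ gives \eqref{V equation}, and under the assumption $c_0 + \gamma = 0$ the dispersive terms collapse to
\begin{equation*}
V_t + (\beta v - \gamma) V_x + 2\beta v_x V = 0.
\end{equation*}
Computing $\frac{\rm d}{{\rm d}t}\bigl[V(\omega,t,q(\omega,t,x))\, q_x^2(\omega,t,x)\bigr]$ along the flow, the chain-rule contribution $V_t + (\beta v - \gamma) V_x$ combines with $V\cdot 2 q_x \dot q_x = 2\beta v_x V q_x^2$ and cancels exactly, yielding, after using $q_x(\omega,0,x) = 1$, the conservation identity $V(\omega,t,q(\omega,t,x))\, q_x^2(\omega,t,x) = V_0(\omega,x)$ on $[0,\tau^*)\times \T$.

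Finally, since $q_x^2 > 0$ and $q(\omega,t,\cdot): \T \to \T$ is a bijection, the identity shows that $V(\omega,t,\cdot)$ shares the pointwise sign of $V_0(\omega,\cdot)$ after relabeling by the diffeomorphism; in particular, if $V_0 > 0$ (resp.\ $<0$) everywhere on $\T$, so is $V(\omega,t,\cdot)$ for all $t \in [0,\tau^*)$. To transfer the conclusion to $v$, I would invoke $v = G_\T * V$ from \eqref{Helmboltz operator}: the explicit kernel $G_\T$ is strictly positive on $\T$, so the convolution of a function of constant sign yields a function of that same constant sign, giving $\text{sign}(v) = \text{sign}(V)$. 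The only delicate point is bookkeeping the regularity required to justify the characteristic manipulations and the chain-rule differentiation, which is already secured by $s > 3$ and Proposition \ref{pathwise solutions v}; there is no genuine obstacle beyond that.
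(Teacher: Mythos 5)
Your proposal is correct and follows essentially the same route as the paper: characteristics $q$ from \eqref{particle line} with $q_x>0$, the reduced momentum equation under $c_0+\g=0$, the conservation identity $V(t,q(t,x))\,q_x^2(t,x)=V_0(x)$, and positivity of the kernel $G_\T$ to pass from $V$ to $v$. The only (welcome) addition is that you explicitly note the surjectivity of $q(\omega,t,\cdot)$ on $\T$, which the paper leaves implicit when relabeling by the flow.
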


The next step is to  control $\|u(\omega,t)\|_{W^{1,\infty}}$.  In combination 
  with \eqref{blow-up criterion common}, we will then  directly verify Theorem \ref{Global existence result}.

\begin{Lemma}\label{vx bounded lemma}
	Let all the conditions as in the statement of Proposition \ref{pathwise solutions v} hold true. Let $V$ and $V_0$ be defined in Lemma \ref{same sign with initial data}. If additionally we have $c_0+\g=0$ and
	\begin{align*}
	\p\{V_0(\omega,x)>0,\  \forall\ x\in\T\}=p,\ \
	\p\{V_0(\omega,x)<0,\  \forall\ x\in\T\}=q,
	\end{align*}
	for some $p,q\in[0,1]$, then the maximal solution $(u,\tau^\ast)$ of \eqref{DGH linear noise 2} satisfies
	\begin{equation*}
	\p\Big\{\|u_x(\omega,t)\|_{L^{\infty}}\leq \|u(\omega,t)\|_{L^{\infty}}\lesssim\beta(\omega,t)\|u_0\|_{H^1},\  \forall\ t\in[0,\tau^*)\Big\}\geq p+q.
	\end{equation*}
\end{Lemma}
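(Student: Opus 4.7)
My plan is to transfer everything through the transformation $v = u/\beta$, exploiting both the $H^1$-conservation law \eqref{H1 conservation} for $v$ and the sign preservation of $V = v - v_{xx}$ from Lemma \ref{same sign with initial data}. Define the two disjoint events
\[
\Omega_+ = \{V_0 > 0 \text{ on } \T\}, \qquad \Omega_- = \{V_0 < 0 \text{ on } \T\},
\]
so that $\p(\Omega_+ \cup \Omega_-) = p + q$. On $\Omega_+$ (resp.\ $\Omega_-$), Lemma \ref{same sign with initial data} tells us $V(\omega,t,\cdot) > 0$ (resp.\ $< 0$) on $\T$ for every $t \in [0,\tau^*)$, and consequently $v$ inherits the same constant sign via $v = G_{\T} * V$ with $G_{\T} > 0$.

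The key ingredient will be the pointwise comparison $|G_{\T}'(x)| \leq G_{\T}(x)$ for the periodic Green's function defined in \eqref{Helmboltz operator}. Using the explicit formula, on $x \in (0,2\pi)$ one has $G_{\T}(x) = \cosh(x-\pi)/(2\sinh\pi)$ and $G_{\T}'(x) = \sinh(x-\pi)/(2\sinh\pi)$, so $|G_{\T}'(x)| \leq G_{\T}(x)$ holds pointwise almost everywhere on $\T$. Given this, when $V$ has a definite sign we can estimate
\[
|v_x(x)| = \left| \int_{\T} G_{\T}'(x-y) V(y)\, {\rm d}y \right| \leq \int_{\T} G_{\T}(x-y) |V(y)|\, {\rm d}y = |v(x)|,
\]
because $|V|$ equals $\pm V$ uniformly in $y$. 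Hence $\|v_x(\omega,t)\|_{L^\infty} \leq \|v(\omega,t)\|_{L^\infty}$ on $\Omega_+ \cup \Omega_-$ for all $t \in [0,\tau^*)$.

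Finally, since $s > 3$ in Proposition \ref{pathwise solutions v}, we have from \eqref{H1 conservation} that $\|v(\omega,t)\|_{H^1} = \|u_0(\omega)\|_{H^1}$ almost surely, and the Sobolev embedding $H^1(\T) \hookrightarrow L^\infty(\T)$ gives
\[
\|v(\omega,t)\|_{L^\infty} \lesssim \|v(\omega,t)\|_{H^1} = \|u_0(\omega)\|_{H^1}.
\]
Multiplying through by $\beta(\omega,t)$ and using $u = \beta v$, $u_x = \beta v_x$ yields, on $\Omega_+ \cup \Omega_-$ and for all $t \in [0,\tau^*)$,
\[
\|u_x(\omega,t)\|_{L^\infty} \leq \|u(\omega,t)\|_{L^\infty} = \beta(\omega,t)\|v(\omega,t)\|_{L^\infty} \lesssim \beta(\omega,t)\|u_0(\omega)\|_{H^1},
\]
which is precisely the desired bound. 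I anticipate no genuine obstacle beyond verifying the Green's function inequality $|G_{\T}'| \leq G_{\T}$; the rest is a clean combination of the transformation, the sign-preservation lemma, and the $H^1$-conservation already at our disposal.
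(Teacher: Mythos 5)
Your proposal is correct and follows essentially the same route as the paper: the paper obtains $|v_x|\leq |v|$ under the sign condition from the explicit positive-kernel formulas \eqref{u+ux}--\eqref{u-ux} for $v\pm v_x$ (quoted from the literature), which is exactly equivalent to your directly verified inequality $|G_{\T}'|\leq G_{\T}$. The remaining steps — sign preservation from Lemma \ref{same sign with initial data}, disjointness of the two events, the conservation law \eqref{H1 conservation} with $H^1\hookrightarrow L^\infty$, and transforming back via $u=\beta v$ — coincide with the paper's argument.
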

\begin{proof}
	Using \eqref{Helmboltz operator}, one can derive (see \cite{Tang-2018-SIMA}) that
 for a.e. $\omega\in \Omega$,
%\begin{align*}
%v(\omega,t,x)=&\frac{{\rm e}^{x-\pi}}{4\sinh(\pi)}\int^{x}_0{\rm e}^{-y}V(\omega,t,y){\rm d}y
%+\frac{{\rm e}^{-x+\pi}}{4\sinh(\pi)}
%\int^{x}_0{\rm e}^{y}V(\omega,t,y){\rm d}y,\\
%&+\frac{{\rm e}^{x+\pi}}{4\sinh(\pi)}
%\int^{2\pi}_{x}{\rm e}^{-y}V(\omega,t,y){\rm d}y
%+\frac{{\rm e}^{-x-\pi}}{4\sinh(\pi)}
%\int^{2\pi}_{x}{\rm e}^{y}V(\omega,t,y){\rm d}y,\\
%v_x(\omega,t,x)
%=&\frac{{\rm e}^{x-\pi}}{4\sinh(\pi)}
%\int^{x}_0{\rm e}^{-y}V(\omega,t,y){\rm d}y
%-\frac{{\rm e}^{-x+\pi}}{4\sinh(\pi)}
%\int^{x}_0{\rm e}^{y}V(\omega,t,y){\rm d}y,\\
%&+\frac{{\rm e}^{x+\pi}}{4\sinh(\pi)}
%\int^{2\pi}_{x}{\rm e}^{-y}V(\omega,t,y){\rm d}y
%-\frac{{\rm e}^{-x-\pi}}{4\sinh(\pi)}
%\int^{2\pi}_{x}{\rm e}^{y}V(\omega,t,y){\rm d}y.
%\end{align*}
%The above equations yield that for a.e. $\omega\in \Omega$, 
and
for all $(t,x)\in[0, \tau^*)\times \T$,
\begin{align}
\left[v+v_x\right](\omega,t,x)=\,\,&\frac{1}{2\sinh(\pi)}
\int^{2\pi}_0{\rm e}^{(x-y-2\pi\left[\frac{x-y}{2\pi}\right]-\pi)}V(\omega,t,y)\ {\rm d}y,\label{u+ux}\\
\left[v-v_x\right](\omega,t,x)
=\,\,&\frac{1}{2\sinh(\pi)}
\int^{2\pi}_0{\rm e}^{(y-x+2\pi\left[\frac{x-y}{2\pi}\right]+\pi)}V(\omega,t,y)\ {\rm d}y.\label{u-ux}
\end{align}
	Then one can employ \eqref{u+ux}, \eqref{u-ux} and Lemma \ref{same sign with initial data} to obtain that for a.e. $\omega\in \Omega$ and for all $(t,x)\in[0, \tau^*)\times \T$,
	\begin{equation}\label{ux u set}
	\left\{\begin{aligned}
	-v(\omega,t,x)\leq v_x(\omega,t,x)\leq v(\omega,t,x),\ \ & {\rm if}\ \ V_0(\omega,x)=(1-\partial_{xx}^2)u_0(\omega,x)>0, \\
	v(\omega,t,x)\leq v_x(\omega,t,x)\leq -v(\omega,t,x),\ \ & {\rm if}\ \ V_0(\omega,x)=(1-\partial_{xx}^2)u_0(\omega,x)<0.
	\end{aligned} \right.
	\end{equation}
	%Then it follows that
	%\begin{align}
	%\p\left\{-v(\omega,t,x)\leq v_x(\omega,t,x)\leq v(\omega,t,x),\  \forall\ (t,x)\in[0, \tau^*)\times \T\right\}\geq p, \label{-u<ux<u pro}\\
	%\p\left\{v(\omega,t,x)\leq v_x(\omega,t,x)\leq -v(\omega,t,x),\  \forall\ (t,x)\in[0, \tau^*)\times \T\right\}\geq q, \label{u<ux<-u pro}
	%\end{align}
	Notice that
	\begin{align}\label{non intersection}
	\left\{V_0(\omega,x)>0\right\}\cap \left\{V_0(\omega,x)<0\right\}=\emptyset.
	\end{align}
	Combining \eqref{ux u set} and \eqref{non intersection} yields
	\begin{equation}
	\p\Big\{|v_x(\omega,t,x)|\leq |v(\omega,t,x)|,\  \forall\ (t,x)\in[0, \tau^*)\times \T\Big\}\geq p+q.\label{v<vx probability}
	\end{equation}
	In view of $H^1\hookrightarrow L^{\infty}$, \eqref{H1 conservation} and \eqref{v<vx probability}, we arrive at
	\begin{equation*}
	\p\Big\{\|v_x(\omega,t)\|_{L^{\infty}}
	\leq \|v(\omega,t)\|_{L^{\infty}}
	\lesssim \|v(\omega,t)\|_{H^1}
	=\|u_0\|_{H^1},\ \forall\ t\in[0,\tau^*)\Big\}\geq p+q.
	\end{equation*}
	Via \eqref{transform}, we obtain the desired estimate.
\end{proof}

\begin{proof}[Proof of Theorem \ref{Global existence result}]
	
	Let $(u,\tau^*)$ be the maximal solution to \eqref{DGH linear noise 2}. 
	Then Lemma \ref{vx bounded lemma} implies that 
	\begin{equation*}
	\p\Big\{\|u\|_{W^{1,\infty}}\lesssim 2\beta(\omega,t)\|u_0\|_{H^1},\  \forall\ t\in[0,\tau^*)\Big\}\geq p+q.
	\end{equation*}
	It follows from \ref{eta->0} in Lemma \ref{eta Lemma} that $\sup_{t>0}\beta(\omega,t)<\infty$ $\p-a.s.$ Then we can infer from \eqref{blow-up criterion common} that $\p\{\tau^*=\infty\}\geq p+q$.
	That is to say,
	$
	\p\left\{u\ {\rm exists\ globally}\right\}\geq p+q
	$.
\end{proof}

\subsection{Theorem \ref{blow-up criterion weak noise}: Blow-up scenario}

\begin{proof}[Proof of Theorem \ref{blow-up criterion weak noise}]
		Recall \eqref{transform}. By \ref{eta->0} in Lemma \ref{eta Lemma},
	$A=A(\omega)=\sup_{t>0}\beta(\omega,t)<\infty$ $\p-a.s.$ Then we can first infer from
	$H^1\hookrightarrow L^{\infty}$ and \eqref{H1 conservation} that for all $t>0$,
	\begin{align*}
	\sup_{t>0}\|u\|_{L^{\infty}}\lesssim A\|u_0\|_{H^{1}}<\infty\ \ \p-a.s.,
	\end{align*}
	which is \eqref{blow-up by WB 1}. Now we prove \eqref{blow-up by WB 2}.
	Let $$\Omega_1=\left\{\limsup_{t\rightarrow \tau^*}\|u(t)\|_{H^{s}}=\infty\right\}\ \text{and}\ \  \Omega_2=\left\{\liminf_{t\rightarrow \tau^*}\left[\min_{x\in\T}u_x(t,x)\right]=-\infty\right\}.$$
	By the previously proven blow-up criterion in Theorem \ref{Local pathwise solution}, we have that for a.e. $\omega\in\Omega_2$, $ \omega\in\Omega_1 $. Now we prove that for a.e. $\omega\in\Omega_1$, $ \omega\in\Omega_2 $.
	Suppose not. Then there is a positive random variable $K=K(\omega)<\infty$ almost surely such that	$$u_x(\omega,t,x)>-K,\ \ (t,x)\in[0,\tau^*(\omega))\times\T\ \ \p-a.s.$$  
Using \eqref{V equation}, \eqref{transform} and integration by parts we find that
	\begin{align*}
	\frac{{\rm d}}{{\rm d}t}\int_{\T}V^{2}\ {\rm d}x &
	=2\int_{\T}V [-\beta vV_{x}-2\beta V v_{x}+\gamma V_x]\ {\rm d}x\\
	&=-4\beta \int_{\T}V^{2}v_x\ {\rm d}x-2\beta\int_{\T} V V_xv\ {\rm d}x\\
	%-2(c_0+\g)\int_{\T}V(\omega')v_x(\omega')\ {\rm d}x\\
	&=-3\beta\int_{\T}V^{2}v_x\ {\rm d}x\leq 3K \int_{\T}V^{2}\ {\rm d}x,\ \  t\in [0,\tau^*)\ \ \p-a.s.,
	\end{align*}
	which yields that 
	\begin{equation*}
	\|V\|_{L^2}\lesssim {\rm e}^{3Kt}\|V(0)\|_{L^2}<\infty,\ \ t\in [0,\tau^*)\ \ \ \p-a.s..
	\end{equation*}
	Combining the above estimate, \eqref{transform} and  $A(\omega)=\sup_{t>0}\beta(\omega,t)<\infty$ $\p-a.s.$ (cf. \ref{eta->0} in Lemma \ref{eta Lemma}), we have that
	\begin{equation*} 
	\|u(t)\|_{H^2}\lesssim \beta(t){\rm e}^{K t}\|u(0)\|_{H^2}<\infty,\ \ t\in [0,\tau^*)\ \ \p-a.s.
	\end{equation*}
	By the embedding $H^2\hookrightarrow W^{1,\infty}$ and the blow-up criterion in Theorem \ref{Local pathwise solution}, almost surely we have that $\|u(t)\|_{H^{s}}$ can be extended beyond $\tau^*$. Therefore we obtain a contradiction and hence $\omega\in\Omega_2$. Therefore we obtain \eqref{blow-up by WB 2}.
\end{proof}

\subsection{Theorem \ref{Wave breaking}: Wave breaking and its probability}

 The proof of Theorem \ref{Wave breaking} relies on certain properties of the solution $v$ to the problem \eqref{periodic Cauchy problem transform}.
\begin{Proposition}\label{blow-up sDGH Pro}
	Let $\s=(\Omega, \mathcal{F},\p,\{\mathcal{F}_t\}_{t\geq0}, W)$ be a fixed stochastic basis, let $b(t)$ satisfy Assumption \ref{Assumption-3}, $c_0+\g=0$, $s>3$ and $u_0=u_0(x)\in H^s$ be an $H^s$-valued $\mathcal{F}_0$ measurable random variable with $\E\|u_0\|^2_{H^s}<\infty$. Let $(u,\tau^*)$ be the maximal solution  to \eqref{DGH linear noise 1} with initial random variable $u_0$. 
	Recall the process $\beta$ given in \eqref{transform} and the constant $\lambda$ as in  equation \eqref{f2<f H1}. Let $N=\frac{\lambda}{2}\|u_0\|_{H^1}^2<\infty$. Then for $v$, defined by \eqref{transform}, we have that
	\begin{equation}\label{M=min vx}
	M(\omega,t):=\min_{x\in\T}[v_x(\omega,t,x)]
	\end{equation} 
	satisfies the following estimate almost surely:
	\begin{align}
	\frac{{\rm d}}{{\rm d}t}M(t)
	\leq& \beta N-\beta \frac{1}{2}M^2(t)\ \  {\rm  a.e.\ on}\ (0,\tau^*).\label{blow-up DGH dM 1}
	\end{align}
 Moreover, 	
	if $\displaystyle M(0)<-\sqrt{2N}$ almost surely, then 
	\begin{align}
	M(t)\leq -\sqrt{2N},\ \forall\ t\in[0,\tau^*) \ \ \p-a.s.,\label{M<0}
	\end{align}
	with $M$ being nonincreasing on $[0,\tau^*)$ $\p-a.s.$
\end{Proposition}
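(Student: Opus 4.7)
The strategy is to derive an ODE inequality for $M(t)=\min_{x\in\T}v_x(t,x)$, then run a standard barrier argument. I would first differentiate \eqref{periodic Cauchy problem transform}$_1$ in $x$; using the hypothesis $c_0+\gamma=0$ and the operator identity $\partial_x^2(1-\partial_x^2)^{-1}=(1-\partial_x^2)^{-1}-I$, this gives a Camassa--Holm type equation for $v_x$,
\[
v_{tx}=-\beta v v_{xx}-\tfrac{\beta}{2}v_x^2+\gamma v_{xx}+\beta v^2-\beta(1-\partial_x^2)^{-1}\bigl(v^2+\tfrac{1}{2}v_x^2\bigr).
\]
Since Proposition \ref{pathwise solutions v} provides $v\in C([0,\tau^*);H^s)\cap C^1([0,\tau^*);H^{s-1})$ with $s>3$ (so $v_x$ is classically $C^1$ in $x$), Lemma \ref{Constantin-Escher} yields almost surely a point $z(t)\in\T$ realizing $M(t)=v_x(t,z(t))$, with $M$ a.e.\ differentiable, $M'(t)=v_{tx}(t,z(t))$, and $v_{xx}(t,z(t))=0$. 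Evaluating at $x=z(t)$ then gives
\[
M'(t)=-\tfrac{\beta}{2}M^2(t)+\beta\bigl[v^2(z)-(1-\partial_x^2)^{-1}(v^2+\tfrac{1}{2}v_x^2)(z)\bigr].
\]

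The decisive step, and the main obstacle, is controlling the bracket by $\tfrac{1}{2}v^2(z)$, i.e.\ establishing the pointwise bound
\[
(1-\partial_x^2)^{-1}\bigl(v^2+\tfrac{1}{2}v_x^2\bigr)(z)\ \ge\ \tfrac{1}{2}v^2(z),\qquad z\in\T.
\]
A naive dropping of the nonlocal term would only yield $v^2(z)\le\lambda\|u_0\|_{H^1}^2=2N$ and hence the wrong threshold $-2\sqrt{N}$; to obtain $N$ (and the correct threshold $-\sqrt{2N}$) one must use the two half-kernels $K_\pm$ implicit in \eqref{u+ux}-\eqref{u-ux}. Writing $V=v-v_{xx}$, one has $v\pm v_x=K_\pm *V$, with $K_\pm\ge 0$, $\int_\T K_\pm=1$, $K_++K_-=2G_\T$, and, as periodic distributions, $K_\pm'=\pm K_\pm\mp\delta$ (the $\delta$ coming from the unit jump of $K_\pm$ at the origin). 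Inserting the algebraic splittings $v^2+\tfrac{1}{2}v_x^2=\tfrac{1}{2}(v\pm v_x)^2+\tfrac{1}{2}v^2\mp\tfrac{1}{2}(v^2)_y$, convolving with $K_\pm$, and integrating by parts using $K_\pm'$, the $\delta$-contributions cancel the $K_\pm *v^2$ terms precisely, leaving
\[
K_\pm *(v^2+\tfrac{1}{2}v_x^2)(z)=\tfrac{1}{2}K_\pm *(v\pm v_x)^2(z)+\tfrac{1}{2}v^2(z).
\]
Averaging and using $K_++K_-=2G_\T$, together with $K_\pm *(v\pm v_x)^2\ge 0$, yields the claimed inequality. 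The delicate point is the exact book-keeping of the distributional jumps of $K_\pm$ so that this cancellation is sharp.

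Combining this with the Sobolev bound \eqref{f2<f H1} and the $H^1$-conservation \eqref{H1 conservation}, one obtains $v^2(z)-(1-\partial_x^2)^{-1}(v^2+\tfrac{1}{2}v_x^2)(z)\le\tfrac{1}{2}v^2(z)\le\tfrac{\lambda}{2}\|u_0\|_{H^1}^2=N$, which is exactly \eqref{blow-up DGH dM 1}. Finally, for \eqref{M<0} and the nonincreasing property, assume $M(0)<-\sqrt{2N}$ a.s., and set $T:=\sup\{t\in[0,\tau^*):M(s)\le-\sqrt{2N}\ \forall s\in[0,t]\}$; continuity of $M$ gives $T>0$. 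On $[0,T]$ one has $M^2\ge 2N$, hence $M'\le\beta(N-M^2/2)\le 0$ a.e., so $M$ is nonincreasing on $[0,T]$ and $M(T)\le M(0)<-\sqrt{2N}$. Continuity then prevents $T$ from being interior to $[0,\tau^*)$, so $T=\tau^*$, establishing $M(t)\le-\sqrt{2N}$ throughout $[0,\tau^*)$ together with the monotonicity of $M$.
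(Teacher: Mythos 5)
Your proof is correct and takes essentially the same route as the paper: pass to $v$ via \eqref{transform}, differentiate \eqref{periodic Cauchy problem transform} in $x$, evaluate at the minimizing point via Lemma \ref{Constantin-Escher}, bound $\tfrac12 v^2(t,z(t))$ by $N$ through \eqref{f2<f H1} and \eqref{H1 conservation}, and close \eqref{M<0} with a continuity/barrier argument; the only difference is that you supply a proof of the key convolution inequality $G_{\T}*\bigl(v^2+\tfrac12 v_x^2\bigr)\geq \tfrac12 v^2$ (the paper's \eqref{blow-up DGH G*>}) via the one-sided kernels \eqref{u+ux}--\eqref{u-ux} with the jump relations $K_\pm'=\pm K_\pm\mp\delta$, whereas the paper simply cites it from \cite{Constantin-2000-JNS}. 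One small point to make explicit, as the paper does: $M$ is locally Lipschitz in $t$ (not merely a.e.\ differentiable), which is what justifies integrating $M'\le 0$ a.e.\ into the conclusion that $M$ is nonincreasing in the barrier step.
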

\begin{proof}
	For any $v\in H^1$, it is easy to see that, cf. \cite{Constantin-2000-JNS},
	\begin{align}
	G_{\T}*\left(v^2+\frac{1}{2}v^2_x\right)(x)\geq \frac{1}{2}v^2.\label{blow-up DGH G*>}
	\end{align}
%	Then we notice that for the maximal solution $(u,\tau^*)$ to \eqref{DGH linear noise 2}, Proposition \ref{pathwise solutions v} ensures that $v(\omega,t,x)$  solves \eqref{v equation} with the same initial data $u_0$ almost surely. 
	Using \eqref{F decomposition}, \eqref{Helmboltz operator}, \eqref{v equation} and \eqref{transform}, we find for  $c_0+\g=0$ that
	\begin{equation}\label{blow-up equation DGH}
	v_{tx}-\g v_{xx}+\beta vv_{xx}
	=\beta v^2-\beta \frac{1}{2}v^2_x
	-\beta G_{\T}*\left(v^2+\frac{1}{2}v^2_x\right),\ \ t\in[0,\tau^*)\ \ \ \p-a.s.
	\end{equation}
	By Proposition \ref{pathwise solutions v}, $v(\omega,t,x)\in C^1([0, \tau^*);H^{s-1})$ with $s>3$ almost surely. To apply Lemma \ref{Constantin-Escher} for each path, we recall \eqref{M=min vx} and let $z(\omega, t)$ be a point where the infimum of $v_x$ is attained as in Lemma \ref{Constantin-Escher}. Then for a.e. $\omega\in\Omega$, $v_{xx}(t,z(\omega,t))=0$. Moreover, Lemma \ref{Constantin-Escher} also implies that  for a.e. $\omega\in\Omega$, the path of $M(\omega,t)$ is locally Lipschitz.
	%\begin{equation}
	%\p\left\{M(t)\ {\rm is\ locally\ Lipschitz}\right\}=1.\label{M is continuous}
	%\end{equation}
	Then for almost all $t\in[0,\tau^*)$, evaluating \eqref{blow-up equation DGH} in $(t,z(t))$ with using Lemma \ref{Constantin-Escher} yields for a.e.  $\omega\in\Omega$,
	\begin{equation}\label{dM equation}
	\frac{{\rm d}}{{\rm d}t}M(t)=\beta v^2(t,z(t))-\beta \frac{1}{2}M^2(t)-\beta G_{\T}*\left(v^2+\frac{1}{2}v^2_x\right)(t,z(t))\ \ {\rm a.e.\ on}\ (0,\tau^*).
	\end{equation}
	Since $\E\|u_0\|_{H^s}<\infty$, $N=\frac{\lambda}{2}\|u_0\|_{H^1}^2<\infty$ $\p-a.s.$ 
	Applying \eqref{blow-up DGH G*>},  \eqref{f2<f H1} and
	\eqref{H1 conservation} in the above equation gives that for a.e. $\omega\in\Omega$,
	\begin{align*}
	\frac{{\rm d}}{{\rm d}t}M(t)\leq\,\,& \beta \frac{1}{2}v^2(t,z(t))-\beta \frac{1}{2}M^2(t)\notag\\
	\leq\,\,& \beta \frac{\lambda}{2}\|v(t)\|_{H^1}^2-\beta \frac{1}{2}M^2(t)\notag\\
	=\,\,& \beta N-\beta \frac{1}{2}M^2(t)\ \  {\rm  a.e.\ on}\ (0,\tau^*),
	\end{align*}
	which is \eqref{blow-up DGH dM 1}. In order to show \eqref{M<0}, 
we define $\tau$ as
	\begin{equation*} 
	\tau(\omega):=
	\inf\left\{t>0: M(\omega,t)>-\sqrt{2N}\right\}\wedge \tau^*.
	\end{equation*}
	If $M(0)<-\sqrt{2N}$, then $\p\{\tau>0\}=1$.
	Now we only need to  show that 
	\begin{align}
	\p\{\tau(\omega)=\tau^*(\omega)\}=1.\label{dM<0 2}
	\end{align}
	Actually,
	failure of \eqref{dM<0 2} would ensure the existence of a set  $\Omega'\subseteq\Omega$ such that $\p\{\Omega'\}>0$ and $0<\tau(\omega')<\tau^*(\omega')$ for a.e. $\omega'\in\Omega'$. In view of the time continuity of $M$ (recall Lemma \ref{Constantin-Escher}), we find that
	$
	M(\omega',\tau(\omega'))=-\sqrt{2N}.
	$
	From \eqref{blow-up DGH dM 1} we have that $M(\omega',t)$ is nonincreasing for $t\in[0,\tau(\omega'))$. Hence by the continuity of the path of $M(\omega',t)$ again, we see that $M(\omega',\tau(\omega'))\leq M(0)<-\sqrt{2N}$,
	which is a contradiction. Hence \eqref{dM<0 2} is true and so is \eqref{M<0}. 
\end{proof}

\begin{Proposition}\label{blow-up sDGH Pro 2}
	Let all the conditions as in Proposition \ref{blow-up sDGH Pro} hold true.
	Let $0<c<1$ and 
	\begin{equation*}
 \Omega^*=\left\{\omega:\beta(t)\geq c{\rm e}^{-\frac{b^*}{2}t }\ \text{for\ all}\ t\right\}.
	\end{equation*}
	If $\displaystyle M(0)<-\frac{1}{2}\sqrt{\frac{(b^*)^2}{c^2}+8N}-\frac{b^*}{2c}$ almost surely,  then for a.e. $\omega\in\Omega^*$,
	\begin{align*}
	\tau^*(\omega)<\infty.
	\end{align*}
\end{Proposition}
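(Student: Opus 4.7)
My plan is to reduce the claim to a scalar ODE blow-up argument for $M(t)$. First I would note that the hypothesis on $M(0)$ is strictly stronger than $M(0)<-\sqrt{2N}$, because $\tfrac12\sqrt{(b^*)^2/c^2+8N}\ge\sqrt{2N}$ and $b^*/(2c)>0$. Hence Proposition~\ref{blow-up sDGH Pro} applies: almost surely $M$ is nonincreasing on $[0,\tau^*)$ with $M(t)\le-\sqrt{2N}$, and \eqref{blow-up DGH dM 1} holds. Setting $f(t):=-M(t)$, this becomes
\begin{equation*}
f'(t)\;\ge\;\beta(t)\Bigl(\tfrac12 f(t)^2-N\Bigr)\quad\text{a.e. on }(0,\tau^*),\qquad f(t)\ge f(0)\ \text{ for all }t.
\end{equation*}

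The second step is to turn this Riccati-type inequality into a Bernoulli one. Since $f$ is nondecreasing, I would look for $\alpha\in(0,1)$ with $\tfrac12 f(t)^2-N\ge\tfrac{\alpha}{2}f(t)^2$, which holds as soon as $(1-\alpha)f(0)^2\ge 2N$. Rewriting the resulting inequality $f'\ge\tfrac{\alpha\beta}{2}f^2$ as $-\tfrac{d}{dt}(1/f)\ge\tfrac{\alpha\beta}{2}$ and integrating, while using the lower bound $\beta(t)\ge c\,e^{-b^*t/2}$ valid on $\Omega^*$, yields
\begin{equation*}
\frac{1}{f(0)}-\frac{1}{f(t)}\;\ge\;\frac{\alpha}{2}\int_0^t\beta(s)\,ds\;\ge\;\frac{\alpha c}{b^*}\bigl(1-e^{-b^*t/2}\bigr)\qquad\text{on }\Omega^*.
\end{equation*}
Since the right-hand side tends to $\alpha c/b^*$ as $t\to\infty$, it can reach $1/f(0)$ in finite time precisely when $\alpha>b^*/(c\,f(0))$.

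The crux is therefore to verify that the two constraints $\alpha>b^*/(cf(0))$ and $\alpha\le 1-2N/f(0)^2$ are simultaneously satisfiable. This reduces to $f(0)^2-b^*f(0)/c>2N$, and squaring $2f(0)-b^*/c>\sqrt{(b^*)^2/c^2+8N}$ shows that this is exactly the hypothesis $-M(0)>\tfrac12\sqrt{(b^*)^2/c^2+8N}+b^*/(2c)$. Fixing any admissible $\alpha$, the integrated inequality above is incompatible with $1/f(t)\ge 0$ for all finite $t$, so $f(t)\to\infty$ at some finite $t_\star$, i.e.\ $M(t)\to-\infty$ before $t_\star$ on $\Omega^*$. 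Invoking the wave-breaking criterion \eqref{blow-up by WB 2} in Theorem~\ref{blow-up criterion weak noise}, together with $u_x=\beta v_x$ from \eqref{transform} and $\beta>0$ a.s., concludes $\tau^*(\omega)<\infty$ for a.e.~$\omega\in\Omega^*$. I expect the only delicate point to be the algebraic matching that identifies the threshold in the hypothesis with the compatibility condition on $\alpha$; once $\alpha$ is pinned down, the remaining comparison is a routine Riccati/Bernoulli ODE estimate combined with the exponential lower bound on $\beta$ on $\Omega^*$.
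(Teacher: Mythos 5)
Your proposal is correct and follows essentially the same route as the paper: both convert \eqref{blow-up DGH dM 1} into a Bernoulli-type inequality $M'\le-\tfrac{\alpha\beta}{2}M^2$ using the monotonicity $M^2(t)\ge M^2(0)$, integrate $1/M$ (legitimate since $M$ is locally Lipschitz and bounded away from $0$), lower-bound $\int_0^t\beta$ by $\tfrac{2c}{b^*}(1-{\rm e}^{-b^*t/2})$ on $\Omega^*$, and match the resulting threshold with the hypothesis on $M(0)$. The only difference is cosmetic: you keep $\alpha$ as a free parameter and check compatibility, whereas the paper fixes $\alpha=1-\tfrac{2N}{M^2(0)}$ from the outset; your final appeal to the wave-breaking criterion is unnecessary since the integrated inequality already forces $\tau^*\le t_\star<\infty$.
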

\begin{proof}
	We rewrite \eqref{blow-up DGH dM 1} as
	\begin{align*}
	\frac{{\rm d}}{{\rm d}t}M(t)\leq - \frac{\beta}{2}\left(1-\frac{2N}{M^2(0)}\right)M^2(t)
	-\frac{\beta N}{M^2(0)}M^2(t)+\beta N
	\ \ 	\ {\rm a.e.\ on}\ (0,\tau^*)\ \ \ \p-a.s.
	\end{align*}
	Due to Proposition \ref{blow-up sDGH Pro}, we have  
	\begin{align*}
	\frac{{\rm d}}{{\rm d}t}M(t)\leq& - \frac{\beta(t)}{2}\left(1-\frac{2N}{M^2(0)}\right)M^2(t)
	-\left(\frac{M^2(t)}{M^2(0)}-1\right)\beta(t) N\notag\\
	\leq& - \frac{\beta(t)}{2}\left(1-\frac{2N}{M^2(0)}\right)M^2(t)
	\ \ 	\ {\rm a.e.\ on}\ (0,\tau^*)\ \ \ \p-a.s.
	\end{align*}
	Since $M(t)$  is locally Lipschitz continuous in $t$ and satisfies \eqref{M<0}, $\frac{1}{M(t)}$ is also locally Lipschitz continuous in $t$ almost surely. Therefore an integration  leads to
	\begin{align*}
	\frac{1}{M(t)}-\frac{1}{M(0)}
	\geq\left(1-\frac{2N}{M^2(0)}\right)\int_{0}^{t}\frac{\beta(t')}{2}\ {\rm d}t',\ \ t\in(0,\tau^*)\ \ \p-a.s.,
	\end{align*}
	which together with \eqref{M<0} means that for a.e. $\omega\in\Omega^*$,
	\begin{align*}
	-\frac{1}{M(0)}
	\geq\left(\frac{1}{2}-\frac{N}{M^2(0)}\right)\int_{0}^{\tau^*}\beta(t)	\ {\rm d}t
	\geq \left(\frac{1}{2}-\frac{N}{M^2(0)}\right)\left(\frac{2c}{b^*}-\frac{2c}{b^*}{\rm e}^{-\frac{b^*}{2}\tau^*}\right).
	\end{align*}
	Recall that $\displaystyle M(0)<-\frac{1}{2}\sqrt{\frac{(b^*)^2}{c^2}+8N}-\frac{b^*}{2c}$ almost surely. We finally arrive at
	\begin{align*}
	\left(\frac{1}{2}-\frac{N}{M^2(0)}\right)\frac{2c}{b^*}{\rm e}^{-\frac{b^*}{2}\tau^*}
	\geq\frac{2c}{b^*}\left(\frac{1}{2}-\frac{N}{M^2(0)}\right)+\frac{1}{M(0)}>0\ \ \text{a.e.\ on}\ \Omega^*.
	\end{align*}
	Therefore we have $\tau^*<\infty$ a.e. on $\Omega^*$.
\end{proof}

\begin{proof}[Proof of Theorem \ref{Wave breaking}] 	Proposition \ref{blow-up sDGH Pro 2}  implies that 
	$$\p\left\{
	\tau^*<\infty
	\right\}\geq\p\left\{\beta(t)\geq c{\rm e}^{-\frac{b^*}{2}t }\ \text{for\ all}\ t\right\}.$$
	Since $b^2(t)<b^*$ for all $t>0$, we have
	$$\left\{{\rm e}^{\int_0^t b(t'){\rm d}W_{t'}}>c\ \text{for\ all}\ t\right\}
	\subseteq
	%	\left\{{\rm e}^{\int_0^t b(t'){\rm d}W_{t'}}{\rm e}^{-\int_0^t \frac{b^2(t')}{2}{\rm d}t'}\geq c{\rm e}^{-\frac{b^*}{2}t },\ \forall t>0\right\}
	%	=
	\left\{\beta(t)\geq c{\rm e}^{-\frac{b^*}{2}t }\ \text{for\ all}\ t\right\}.$$
	Therefore we arrive at
	$$\p\left\{
	\tau^*<\infty
	\right\}\geq\p\left\{{\rm e}^{\int_0^t b(t'){\rm d}W_{t'}}>c\ \text{for\ all}\ t\right\}>0,$$
	which gives the desired estimate in 	Theorem \ref{Wave breaking}.
\end{proof}

\subsection{Theorem \ref{Blow-up rate}: Wave breaking rate} 
As the last contribution of the paper,  we prove Theorem \ref{Blow-up rate}, which provides a precise bound on the wave breaking rate.

\begin{proof}[Proof of Theorem \ref{Blow-up rate}.]
	Recalling \eqref{dM equation}, we have that almost surely
	\begin{align*}
-\beta\left\|G_{\T}*\left(v^2+\frac{1}{2}v^2_x\right)\right\|_{L^\infty}\leq\frac{{\rm d}}{{\rm d}t}M(t)+\beta \frac{1}{2}M^2(t)\leq\beta \|v\|^2_{L^\infty}
	\ \ 	\ {\rm a.e.\ on}\ (0,\tau^*).
	\end{align*}
	Using  $\|G\|_{L^\infty}<\infty$ and \eqref{H1 conservation}, we have $$\left\|G_{\T}*\left(v^2+\frac{1}{2}v^2_x\right)\right\|_{L^\infty}\lesssim\left\|v^2+\frac{1}{2}v^2_x\right\|_{L^1}\lesssim \|v\|^2_{H^1}=\|u_0\|^2_{H^1}.$$
	Therefore there is a constant $C>0$ such that 
	\begin{align}\label{M estimate blow-up rate}
	-C\beta\|u_0\|^2_{H^1}\leq\frac{{\rm d}}{{\rm d}t}M(t)+\beta \frac{1}{2}M^2(t)\leq C\beta\|u_0\|^2_{H^1}
	\ \ 	\ {\rm a.e.\ on}\ (0,\tau^*).
	\end{align}
	Let $\varepsilon \in (0,\frac{1}{2})$ and $K=C\|u_0\|^2_{H^1}$. Since $\liminf_{t\rightarrow \tau^*}M(t)=-\infty$ a.e. on $\{\tau^*<\infty\}$ and $K<\infty$ almost surely (cf. Theorem \ref{blow-up criterion weak noise}), for a.e. $\omega\in\Omega$,
	there is some $t_{0}=t_0(\omega,\e) \in (0,\tau^*)$ with $M(t_{0})<0$ and $M^{2}(t_{0})>\frac{K}{\varepsilon}$. Similar to the proof of \eqref{M<0}, we have that for a.e. $\omega\in\{\tau^*<\infty\}$,
	\begin{equation}\label{M^2 lower bound}
	M^{2}(t)>\frac{K}{\varepsilon},\ \ t\in [t_{0},\tau^*).
	\end{equation}
	A combination of \eqref{M estimate blow-up rate} and \eqref{M^2 lower bound} enables us to infer that for a.e. $\omega\in\{\tau^*<\infty\}$,
	%\begin{equation}
	%-\eta\frac{K}{M^2}-\eta \frac{1}{2} <\frac{\frac{{\rm d}}{{\rm d}t}M(t)}{M^2}<\eta \frac{K}{M^2}-\eta \frac{1}{2}  \ \ a.e.\ \
	%on \ (0,\tau^*).
	%\end{equation}
	\begin{equation*}
	\beta\frac{K}{M^2(t)}+ \frac{\beta}{2} >-\frac{\frac{{\rm d}}{{\rm d}t}M(t)}{M^2(t)}>-\beta \frac{K}{M^2(t)}+  \frac{\beta}{2}\  \ \ {\rm a.e.\ on } \ (t_0,\tau^*).
	\end{equation*}
	Since for a.e. $\omega\in\{\tau^*<\infty\}$, the path of $M$ is locally Lipschitz in $t$ and satisfies \eqref{M^2 lower bound}, $\frac{1}{M}$ is also locally Lipschitz. 
	%Then we integrate the above estimate on $(t,\alpha)$ to derive that for a.e. $\omega\in\Omega^*$,
	%\begin{equation*}
	%\left(\frac{1}{2}+\varepsilon\right) \int_{t}^{\alpha}\beta(t'){\rm d}t'\geq \frac{1}{M(\alpha)}-\frac{1}{M(t)}\geq \left(\frac{1}{2}-\varepsilon\right) \int_{t}^{\alpha}\beta(t'){\rm d}t',\ \ t_0<t<\alpha<\tau^*.
	%\end{equation*}
	%Taking expectation gives rise to
	%\begin{equation*}
	%\left(\frac{1}{2}+\varepsilon\right) (\alpha-t)
	%\geq \E\left( \frac{1}{M(\alpha)}\right)-\E\left( \frac{1}{M(t)} \right) 
	%\geq \left(\frac{1}{2}-\varepsilon\right) (\alpha-t).
	%\end{equation*}
	%that is
	%\begin{equation*}
	%\frac{1}{\frac{1}{2}+\varepsilon}\leq -m(t)(T-t)\leq \frac{1}{\frac{1}{2}-\varepsilon},\ \ t\in (t_0; T).
	%\end{equation*}
	Then we integrate the above estimate on $(t,\tau^*)$ to derive that for a.e. $\omega\in\{\tau^*<\infty\}$,
	\begin{equation*}
	\left(\frac{1}{2}+\varepsilon\right) \int_{t}^{\tau^*}\beta(t') \,{\rm d}t'\geq -\frac{1}{M(t)}\geq \left(\frac{1}{2}-\varepsilon\right) \int_{t}^{\tau^*}\beta(t')\,{\rm d}t',\ \ t_0<t<\tau^*.
	\end{equation*}
	%	which is equivalent to
	%	\begin{equation*}
	%	\frac{1}{\frac{1}{2}+\varepsilon} \leq -M(t)\int_{t}^{\tau^*}\beta(t'){\rm d}t'\leq \frac{1}{\frac{1}{2}-\varepsilon} ,\ \ t_0<t<\tau^*.
	%	\end{equation*}
	Therefore we can infer from \eqref{transform} and \eqref{M=min vx} that for a.e. $\omega\in\{\tau^*<\infty\}$,
	\begin{equation*}
	\frac{1}{\frac{1}{2}+\varepsilon} \leq -\min_{x\in\T}[u_x(\omega,t,x)]\beta^{-1}(t)\int_{t}^{\tau^*}\beta(t')\,{\rm d}t'\leq \frac{1}{\frac{1}{2}-\varepsilon} ,\ \ t_0<t<\tau^*.
	\end{equation*}
	Since $\varepsilon \in (0,\frac{1}{2})$ is arbitrary, we obtain that for $\beta(\omega,t)={\rm e}^{\int_0^tb(t')\, {\rm d} W_{t'}-\int_0^t\frac{b^2(t')}{2} \,{\rm d}t'}$,
	\begin{equation*}
	\lim_{t\rightarrow \tau^*} \left( \min_{x\in\T}[u_x(t,x)]\int_{t}^{\tau^*}\beta(t')\,{\rm d}t'\right) =-2\beta(\tau^*) \ \  \text{a.e.\ on}\ \ \{\tau^*<\infty\},
	\end{equation*}
	which completes the proof.
\end{proof}

\section*{Acknowledgement}
The authors would like to express their great gratitude to the anonymous referees for their 
important suggestions, which have led to 
a significant improvement of this paper. Hao Tang 
 is deeply indebted to Professor Feng-Yu Wang for his insightful suggestions on the Lyapunov-type condition on the noise.

%%\bibliographystyle{plain}
%\bibliographystyle{abbrv}
%\bibliography{SDGH-Bib}

\end{document}